\theoremstyle{plain}
\newtheorem{theorem}{Theorem}[section]
\newtheorem{lemma}[theorem]{Lemma}
\newtheorem{corollary}[theorem]{Corollary}
\newtheorem{proposition}[theorem]{Proposition}
\theoremstyle{definition}
\newtheorem{definition}[theorem]{Definition}
\theoremstyle{remark}
\newtheorem{remark}[theorem]{Remark}
\numberwithin{equation}{section}
\def\bold#1{\mbox{\boldmath $#1$}}
\newcommand{\uu}[1]{\bold{#1}}
\newcommand{\abs}[1]{\lvert#1\rvert}
\newcommand{\D}{\partial}
\newcommand{\dd}{\mathrm{d}}
\newcommand{\dive}{\mathrm{div}}
\newcommand{\bdive}{\uu{\mathrm{div}}}
\newcommand{\Dt}{\partial_t}
\newcommand{\gm}{\gamma}
\newcommand{\grd}{\nabla}
\newcommand{\bgrd}{\uu{\nabla}}
\newcommand{\mbb}{\mathbb}
\newcommand{\mcal}{\mathcal}
\newcommand{\norm}[1]{\lVert#1\rVert}
\newcommand{\Norm}[1]{{\left\vert\kern-0.25ex\left\vert\kern-0.25ex\left\vert #1 
    \right\vert\kern-0.25ex\right\vert\kern-0.25ex\right\vert}}
\newcommand{\half}{\frac{1}{2}}
\newcommand{\veps}{\varepsilon}
\newcommand{\vphi}{\phi}
\newcommand{\s}{\sigma}
\newcommand{\vel}{\uu{u}}
\newcommand{\M}{\mcal{M}}
\newcommand{\E}{\mcal{E}}
\newcommand{\Ds}{D_\sigma}
\newcommand{\Eds}{\bar{\mcal{E}}(D_\sigma)}
\newcommand{\Eint}{\mcal{E}_{\mathrm{int}}}
\newcommand{\Ek}{\mcal{E}(K)}
\newcommand{\Lm}{L_{\M}(\Omega)}
\newcommand{\Hez}{\uu{H}_{\E,0}(\Omega)}
\newcommand{\dt}{\delta t}
\newcommand{\chark}{\mcal{X}_K}
\newcommand{\fsigk}{F_{\sigma , K}}
\newcommand{\fesig}{F_{\epsilon , \sigma}}
\newcommand{\diam}{\mathrm{diam}}
\newcommand{\absq}[1]{\abs{#1}^2}
\newcommand{\intr}{\mathrm{int}}
\newcommand{\extr}{\mathrm{ext}}
\def\bold#1{\mbox{\boldmath $#1$}}
\begin{document}

\title[An Energy Stable Well-balanced Scheme]{An Energy Stable 
  Well-balanced Scheme for the Barotropic Euler System with Gravity
  under the Anelastic Scaling}   
  
\author[Arun]{K.\ R.\ Arun}
\thanks{K.\ R.\ A.\ gratefully acknowledges the Core Research Grant -
  CRG/2021/004078 from the Science and Engineering Research Board,
  Department of Science \& Technology, Government of India.}  
\address{School of Mathematics, Indian Institute of Science Education
  and Research Thiruvananthapuram, Thiruvananthapuram 695551, India}   
\email{arun@iisertvm.ac.in, mainak17@iisertvm.ac.in}

\author[Kar]{Mainak Kar}

\date{\today}

\subjclass[2020]{Primary 35L50, 35L60, 35L65, 35L67; Secondary 65M08, 65M12}

\keywords{Euler equations with gravity, Well-balanced, Anelastic
  limit, Asymptotic
  preserving, Finite volume method, MAC grid,
  Energy stability}

\maketitle

\begin{abstract}
  We design and analyse an energy stable, structure preserving,
  well-balanced and asymptotic preserving (AP) scheme for the 
  barotropic Euler system with gravity in the anelastic limit. The key
  to energy stability is the introduction of appropriate velocity
  shifts in the convective fluxes of mass and momenta. The
  semi-implicit in time and finite volume in space fully-discrete
  scheme supports the positivity of density and yields the consistency
  with the weak solutions of the Euler system upon mesh
  refinement. The numerical scheme admits the discrete hydrostatic
  states as solutions and the stability of numerical solutions in
  terms of the relative energy leads to well-balancing. The AP
  property of the scheme, i.e.\ the boundedness of the mesh parameters
  with respect to the Mach/Froude numbers and the scheme's asymptotic
  consistency with the anelastic Euler system is rigorously shown on
  the basis of apriori energy estimates. The numerical scheme is
  resolved in two steps: by solving a non-linear elliptic problem for
  the density and a subsequent explicit computation of the
  velocity. Results from several benchmark case studies are presented
  to corroborate the proposed claims.   
\end{abstract}

\section{Introduction}
\label{sec:Intro}

The Euler equations of inviscid compressible fluid dynamics are
invariably one of the most commonly used set of equations to model
non-hydrostatic atmospheric processes. These equations have a 
complex wave-structure consisting of advection waves, gravity waves
and acoustic waves or sound waves. Though the sound waves are not of
much relevance in atmospheric flow calculations, they introduce a lot
of difficulties in the numerical integration of flow equations due to
their large characteristic speeds. Developing efficient methods for
handling fast acoustic waves is still an active area of research. In
the realm of atmospheric flow computations, the so-called
sound-proof models constitute a good alternative to using the full set
of compressible Euler equations. The sound-proof models are usually
derived via approximating the Euler equations by eliminating fast
acoustic waves, at the same time retaining meteorologically relevant
advection and internal gravity waves. For the Euler equations, the two 
characteristic parameters which quantify the effects due to
compressibility and gravitational stratification are, respectively,
the Mach and Froude numbers. It is relevant to derive sound-proof
models for atmospheric and astrophysical flows by considering the
scenario when the Mach and Froude numbers diminish to zero at the same
rate. The resulting regime, wherein the flow becomes incompressible
and stratified simultaneously at the same pace, can be represented by
a sound-proof equation system called the anelastic equations
\cite{MET+00}. In the meteorological literature, different types of
anelastic models which vary in the way they account for density or
temperature stratifications and internal gravity waves can be found
\cite{Kle09}. The forerunners among them are the anelastic model of
Ogura and Philips \cite{OP62}, the anelastic model of Bannon
\cite{Ban96} and the pseudo-incompressible model of Durran
\cite{Dur88}.      

From a mathematical point of view, the Ogura and Philips anelastic
approximation of the Euler system via the vanishing limit of the Mach 
and Froude numbers corresponds to performing a singular limit of the
governing equations whereby the Euler system changes its nature from
hyperbolic to mixed hyperbolic-elliptic; see, e.g.\
\cite{BF18a,FZ23,FN09,Mas07} for rigorous and detailed accounts on
various sound-proof limits. Consequently, a feasible platform to build
a numerical Euler solver for multiscale atmospheric flow applications
turns out to be the so-called `Asymptotic Preserving' (AP) methodology
\cite{Jin12}. The AP framework is a generic, yet powerful, tool for
approximating singularly perturbed problems. When applied to a
stratified atmospheric flow simulation, an AP scheme can automatically 
emulate the flow features in the singular (anelastic), non-singular
(compressible) as well as the intermediate (weakly compressible)
regions where the regime changes can occur. The working principle
behind the AP methodology is as follows. Suppose $P^\veps$ denote a
singularly perturbed problem, such as the scaled compressible Euler
equations containing small Mach and Froude numbers denoted by an
infinitesimal $\veps$. In the singular limit $\veps\to0$, suppose the
problem $P^\veps$ converges to a well-posed limit problem $P^0$; e.g.\
the anelastic model. An AP scheme respects the above convergence at
the discrete level in the following sense. 
\begin{enumerate}[label=(\roman*)]
\item Let $h$ denote the space-time discretisation parameter. For each
  $\veps>0$, let $P^\veps_h$ denote a consistent discretisation of the
  problem $P^\veps$, i.e.\ $P^\veps_h\to P^\veps$ as $h\to 0$.
\item For a fixed discretisation $h$, the scheme $P^\veps_h$ goes 
  to  $P^0_h$ as $\veps\to 0$, where $P^0_h$ denotes a consistent
  scheme for the limit problem $P^0$, i.e.\ $P^0_h\to P^0$ as $h\to
  0$.  
\item The stability constraints on the discretisation parameter $h$
  remain independent of $\veps$. 
\end{enumerate}

A rather common practice to develop AP schemes for time-dependent
partial differential equations, such as the Euler equations, is to employ
an implicit-explicit (IMEX) or semi-implicit time
discretisation \cite{BFR16}. Compared to their fully-implicit
counterparts, a semi-implicit formalism has the advantage that it
minimises the need to invert large and dense matrices.  However, the
challenge is to maintain the optimal level of implicitness so as to
overcome the stiffness imposed by the time-step restrictions and
achieve computational efficiency at the same time.  

Models of gravity driven flows, such as the Euler equations with
gravitational source terms, often describe a phenomenon where the
force due to the gravity is balanced by the internal force. This
balance of forces leads to an interesting stationary solution, known
as the hydrostatic steady state, which corresponds to a static fluid with
the corresponding pressure gradient exactly in balance with the weight
of the fluid. Most of the atmospheric flow problems of practical 
interest can be described as small perturbations of the hydrostatic
steady state \cite{Dur99}. Furthermore, the derivation of Ogura and
Philipps anelastic approximation of the Euler equations by a scale
analysis \cite{Dur99,OP62} or a zero Mach number asymptotic analysis
\cite{Kle00} reveals that the ambient flow or the leading order
solution is in hydrostatic balance. Consequently, it is much desirable
that numerical schemes for the Euler equations with gravity,
particularly in the anelastic regime, be capable of preserving the
steady states at a discrete level while accurately emulating the flow
features evolving from its perturbations. Unfortunately, classical
numerical schemes often fail to preserve the steady states for a large
time within an acceptable range of accuracy. A cure to this ailment is
achieved by introducing the so-called `well-balanced' schemes. A
well-balanced scheme exactly satisfy the discrete counterparts of the
steady states of the corresponding continuum equations. The
well-balancing methodology has since been adopted and developed for
formulating robust numerical schemes for several hydrodynamic models
with source terms in the context of explicit time-stepping schemes;
see, e.g.\ \cite{ABB+04,FMT11,Gos00,Xu02} for a few
references. However, establishing the well-balancing property of IMEX 
or semi-implicit time discretisation schemes is a challenging
task. Often, proving the balance for implicit time discretisations
involve solving nonlinear algebraic equations for the pressure or
density. The literature in this direction is rather sparse; see, e.g.\
\cite{BBK24,BAL+14,TPK20} and the references therein.     

The aim of the present work is to design and analyse a semi-implicit,
AP, well-balanced, energy stable and structure preserving finite
volume scheme for the barotropic Euler system with gravity in the
anelastic limit. An optimal semi-implicit time discretisation is
adopted to overcome the stiffness posed by the pressure gradient
and gravitational terms. Since the solutions of hyperbolic systems
are known to develop discontinuities in finite time, additional
stability requirements are to be imposed to single out physically
valid weak solutions. Numerical schemes that respect important
physical stability properties of the corresponding continuum
equations, such as preserving the positivity of mass density,
respecting the second law of thermodynamics (entropy stability) and so 
on, are sometimes referred to as invariant domain preserving or
structure preserving methods. Consequent to the semi-implicit 
treatment of the fluxes, the present scheme achieves the energy
stability apriori under a CFL-like time-step restriction. An
application of the tools from topological degree theory in finite
dimensions helps in proving the existence of a numerical solution and
the positivity of density. The scheme is weakly consistent with the
continuous equations as the mesh parameters go to zero {\`a} la
Lax-Wendroff. We employ the so-called relative energy, defined with
respect to a hydrostatic equilibrium state, as a measure to establish
the energy stability. The relative energy is a powerful tool that can
be adopted to obtain uniform estimates on the solutions and to perform
rigorous asymptotic limits of the continuous equations \cite{FN09} as
well as their numerical approximations
\cite{AGK23,FL18,HLS21}. Interestingly, in the present work, we 
show that using relative energy not only facilitate the entropy
stability but also gives another crucial benefit, namely the
provability of both the well-balancing and AP properties. Our careful
design to achieve these salient features involves two key
ingredients. The first is introducing a shift in the velocity in the
discrete convective fluxes of mass and momenta as in
\cite{DVB17,DVB20} to get the dissipation of mechanical energy. The
second is a novel treatment of the discrete density on the interfaces,
while discretising the mass flux and the stiff gravitational source
term. This particular interface discretisation, referred to as the
`$\gamma$-mean' with $\gamma$ being the adiabatic constant, helps in
emulating the entropy balance at the discrete level, and subsequently
leads to the well-balancing and AP properties. The stability of
solutions with respect to the relative energy ensures that any initial
steady state remains preserved during the subsequent updates. In other
words, the discrete energy stability renders the scheme
well-balanced. The stability also guarantees that numerical solutions
generated from initial conditions that are given in terms of
perturbations of the steady state remain numerically stable, i.e.\ no
spurious solutions are generated. The discrete relative energy
stability further leads to apriori estimates on the solution which
enables us to perform a rigorous asymptotic convergence analyses with
respect to the singular parameters, i.e.\ the Mach and Froude
numbers. Consequently, we establish the AP property of the scheme,
where we rigorously prove that the proposed scheme for the
compressible Euler system with gravity converges to a semi-implicit
scheme for the anelastic Euler equations as the Mach and Froude
numbers vanish. We wish to emphasise that this is the first time a
finite volume scheme has been developed for the barotropic Euler
system with gravitational source term, which is at the same time
provably energy stable, well-balanced, AP and positivity preserving,
to the best of our knowledge. 

The rest of this paper is organised as follows. In
Section\,\ref{sec:ewg_cont} we review a few results regarding the
scaled barotropic Euler system with gravity and its anelastic
limit. Section\,\ref{sec:MAC_disc_diff} provides an outline of the
domain discretisation along with the discrete differential
operators in a finite volume framework. The final scheme and the
description of the discrete hydrostatic steady state is described in
Section\,\ref{sec:FVApp}. Section\,\ref{sec:enstab_wb} is devoted to
prove the energy stability and the well-balancing property of the
proposed scheme. In Sections \ref{sec:weak_cons} and
\ref{sec:dis_anelastic_limit} we give rigorous proofs of the
consistency of the approximate solutions with respect to the weak
solutions and the anelastic limit model respectively. Finally, in
Section\,\ref{sec:num_exp}, we substantiate our claims regarding the
salient properties of the scheme with several numerical case studies.   

\section{Compressible Euler System with Gravity and the Anelastic
  Limit} 
\label{sec:ewg_cont}

We start with the following non-dimensional barotropic compressible
Euler system with gravity, parametrised by the Mach and the Froude
numbers, and posed for $(t, \uu{x}) \in Q_T := (0, T ) \times \Omega$:
\begin{align}
  \D_t\rho^\veps+\dive (\rho^\veps\uu{u}^\veps) &=
                                                  0, \label{eq:cons_mas}\\ 
  \D_t(\rho^\veps\uu{u}^\veps)+\bdive
  (\rho^\veps\uu{u}^\veps\otimes\uu{u}^\veps)+\frac{1}{\veps^2}\grd
  p^\veps &=-\frac{1}{\veps^2}\rho^\veps\grd\phi. \label{eq:cons_mom}  
\end{align}
Here, $T>0$ and $\Omega\subset\mbb{R}^d, \ d=1,2,3$, is a bounded open
connected subset and the dependent variables
$\rho^\veps=\rho^\veps(t,\uu{x})>0$ and
$\uu{u}^\veps=\uu{u}^\veps(t,\uu{x})\in\mbb{R}^d$ denote the fluid
density and the fluid velocity, respectively. The pressure
$p^\veps=\wp(\rho^\veps)$ is assumed to follow a barotropic equation
of state $\wp(\rho):=\rho^\gamma$, with $\gamma> 1$ being the ratio of
specific heats. The gravitational potential $\phi=\phi(\uu{x})$ is
assumed to be a known, continuous function. The system
\eqref{eq:cons_mas}-\eqref{eq:cons_mom} is supplied with the following
initial and boundary conditions:
\begin{equation}
  \label{eq:eq_ic_bc}
  \rho^\veps\vert_{t=0}=\rho^\veps_0, \quad
  \uu{u}^\veps\vert_{t=0}=\uu{u}^\veps_0, \quad
  \uu{u}^\veps\cdot\uu{\nu}\vert_{\D\Omega}=\uu{0},
\end{equation}
where $\uu{\nu}$ denotes the unit outward normal to the boundary 
$\D\Omega$. Throughout this work, we assume that the system
\eqref{eq:cons_mas}-\eqref{eq:cons_mom} is under the anelastic scaling
so that the pressure gradient and the gravitational force term are of
equal magnitude in the momentum equation \eqref{eq:cons_mom}.
In other words, the Mach and the Froude numbers are scaled by the same
power of the infinitesimal $\veps\in (0,1]$. Classical solutions to the
initial boundary value problem \eqref{eq:cons_mas}-\eqref{eq:eq_ic_bc}
are known to develop discontinuities in finite time, which necessitates
the definition of weak solutions. A weak solution of the system
\eqref{eq:cons_mas}-\eqref{eq:cons_mom} in the conservative variables
$\rho^\veps$ and $\uu{m}^\veps=\rho^\veps\uu{u}^\veps$ is defined
as follows.
\begin{definition}\label{defn:weak_soln}
  The pair $(\rho^\veps,\uu{m}^\veps)$ is a weak solution of
  \eqref{eq:cons_mas}-\eqref{eq:cons_mom} if
  \begin{enumerate}[label=(\roman*)]
  \item $\rho^\veps > 0$ a.e.\ in $[0,T) \times \Omega$, and the identity
    \begin{equation}
      \label{eq:cont_weak_soln_mas}
      \int_0^T\int_\Omega\left(\rho^\veps\D_t\varphi+
        \uu{m}^\veps\cdot\grd\varphi\right)\dd\uu{x}\dd t
      =-\int_\Omega\rho^\veps_0\phi(0,\cdot)\dd\uu{x}
    \end{equation}
    holds for all $\varphi\in C_c^\infty([0,T)\times\bar{\Omega})$.
  \item $\uu{m}^\veps=\uu{0}$ whenever $\rho^\veps=0$, and the identity
    \begin{equation}
      \label{eq:cont_weak_soln_mom}
      \begin{aligned}
        \int_0^T\int_\Omega\left(\uu{m}^\veps\cdot\D_t\uu{\varphi}+
          \left(\frac{\uu{m}^\veps\otimes\uu{m}^\veps}{\rho^\veps}\right)
          \colon\bgrd\uu{\varphi}+\frac{1}{\veps^2}p^\veps 
        \,
        \dive\uu{\varphi}\right)\dd\uu{x}\dd t&=
      -\int_\Omega\uu{m}^\veps_0\cdot\uu{\varphi}(0,\cdot)\dd\uu{x}
      \\
      &\quad+\frac{1}{\veps^2}\int_0^T\int_\Omega\rho^\veps
      \grd\phi\cdot\uu{\varphi}\,\dd\uu{x}\dd t
    \end{aligned}
    \end{equation}
    holds for all $\uu{\varphi}\in C_c^\infty([0,T)\times\bar{\Omega})^d$.
  \end{enumerate}
\end{definition}
The well-posedness of the Euler system
\eqref{eq:cons_mas}-\eqref{eq:cons_mom} in the class of weak
solutions as defined above is not well understood for general initial
data; see \cite{BF18a,BF18b,FKK+20} for more discussions. Throughout
the present work, however, we assume the existence of a weak solution
of \eqref{eq:cons_mas}-\eqref{eq:cons_mom} in order to carry out the
numerical analysis.    

\subsection{Hydrostatic Steady States}
\label{sec:hydstat}
In the numerical modelling of the Euler system with gravity, the
hydrostatic steady state wherein the velocities vanish and the
pressure gradient exactly balances the gravitational source term plays
a vital role. For the scaled Euler system
\eqref{eq:cons_mas}-\eqref{eq:cons_mom}, the hydrostatic steady state
is given by $\rho^\veps=\tilde{\rho}, \ \uu{u}^\veps=\uu{0}$, where 
the steady state density $\tilde{\rho}$ is independent of $\veps$ and
can be obtained from the equilibrium condition 
\begin{equation}
  \label{eq:hydrostat}
  \grd\tilde{p}=-\tilde{\rho}\grd\vphi.
\end{equation}
Here, $\tilde{p}=\wp(\tilde{\rho})$ denotes the hydrostatic
pressure. We can integrate \eqref{eq:hydrostat} to readily yield  
\begin{equation}
  \label{eq:int_pot}
  h_{\gamma}^{\prime}(\tilde{\rho})+\phi=C(m_0),
\end{equation}
where the so-called Helmholtz function
$h_{\gamma}\colon\mbb{R}^{+}\rightarrow \mbb{R}$ is defined by
\begin{equation}
  \label{eq:psi_gamma_a}
  h_{\gamma}(\rho):=\frac{\rho^\gamma}{\gamma -1},\;\rho\in\mbb{R}^{+},
\end{equation} 
and $C(m_0)$ is a constant of integration which can be determined by
the total mass $m_0=\int_{\Omega}\tilde{\rho}(\uu{x})\dd\uu{x}$ at
equilibrium. Note that $h_\gamma$ is a convex function and it satisfies the
following relation for all $\rho>0$:
\begin{equation}
  \label{eq:psi_gm_prop}
  \rho h_\gamma^{\prime}(\rho)-h_\gamma(\rho)=\wp(\rho).
\end{equation}
\begin{remark}
  For convenience, we assume that $\gamma>1$ throughout the present 
  work. The case $\gamma=1$ can be handled in a similar fashion. 
\end{remark}

\subsection{Apriori Energy Estimates}
\label{sec:engy_est}
We can derive the following apriori estimates for classical solutions
of the Euler system.    
\begin{proposition}
  \label{prop:cont_en_id}
  The following identities are satisfied by classical solutions of
  \eqref{eq:cons_mas}-\eqref{eq:cons_mom}.
  \begin{enumerate}[label=(\roman*)]
  \item A renormalisation identity:
    \begin{equation}
      \label{eq:cont_renorm}
      \Dt h_\gamma(\rho^\veps)
      +\dive\left(h_\gamma(\rho^\veps)\uu{u}^\veps\right)
      +p^\veps\dive\uu{u}^\veps=0.
    \end{equation}
  \item A positive renormalisation identity:
    \begin{equation}
      \label{eq:cont_porenorm}
      \Dt\Pi_\gm(\rho^\veps|\tilde{\rho})
      +\dive\left(h_\gamma(\rho^\veps)\uu{u}^\veps\right)
      +p^\veps\dive\uu{u}^\veps=h_\gamma^\prime(\tilde{\rho})\dive
      (\rho^\veps\uu{u}^\veps),
    \end{equation}
    where the relative internal energy
    $\Pi_\gm(\rho|\tilde{\rho}):=
    h_\gamma(\rho)-h_\gamma(\tilde{\rho})-h_{\gamma}^{\prime}
    (\tilde{\rho})(\rho-\tilde{\rho})$
    is an affine approximation of $h_\gamma$ with respect to the
    hydrostatic density $\tilde{\rho}$.
    
  \item The kinetic energy identity:
    \begin{equation}
      \label{eq:cont_kinbal}
      \Dt\Big(\frac{1}{2}\rho^\veps{\abs{\uu{u}^\veps}}^2\Big)
      +\dive\Big(\frac{1}{2}\rho^\veps{\abs{\uu{u}^\veps}}^2\uu{u}^\veps\Big)
      +\frac{1}{\veps^2}\grd p^\veps\cdot\uu{u}^\veps
      =-\frac{1}{\veps^2}\rho^{\veps}\grd \phi\cdot\uu{u}^\veps.
    \end{equation}
  \item The total energy identity:
    \begin{equation}
      \label{eq:cont_totbal}
      \Dt\Big(\frac{1}{2}\rho^\veps{\abs{\uu{u}^\veps}}^2+\frac{1}
      {\veps^2}\Pi_\gamma(\rho^\veps|\tilde{\rho})\Big)
      +\dive\bigg(\Big(\frac{1}{2}\rho^\veps{\abs{\uu{u}^\veps}}^2
      +\frac{1}{\veps^2}h_\gamma(\rho^\veps)
      +\frac{1}{\veps^2}p^\veps-\frac{1}{\veps^2}
      h_\gamma^{\prime}(\tilde{\rho})\rho^\veps\Big)\uu{u}^\veps\bigg)=0.
    \end{equation}
  \end{enumerate}  
\end{proposition}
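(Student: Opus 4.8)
The plan is to establish the four identities in the stated order, each leveraging the previous ones, with the pointwise algebraic relation \eqref{eq:psi_gm_prop} and the hydrostatic balance \eqref{eq:int_pot} serving as the two essential ingredients; throughout I work with classical solutions so that all products and compositions may be differentiated freely. For the renormalisation identity \eqref{eq:cont_renorm}, I would expand $\Dt h_\gamma(\rho^\veps) = h_\gamma'(\rho^\veps)\Dt\rho^\veps$ and $\dive(h_\gamma(\rho^\veps)\uu{u}^\veps) = h_\gamma'(\rho^\veps)\uu{u}^\veps\cdot\grd\rho^\veps + h_\gamma(\rho^\veps)\dive\uu{u}^\veps$, then collect terms. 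Using the mass balance \eqref{eq:cons_mas} in the form $\Dt\rho^\veps + \uu{u}^\veps\cdot\grd\rho^\veps = -\rho^\veps\dive\uu{u}^\veps$ reduces the left-hand side to $\big(h_\gamma(\rho^\veps) - \rho^\veps h_\gamma'(\rho^\veps)\big)\dive\uu{u}^\veps$, which by \eqref{eq:psi_gm_prop} equals $-\wp(\rho^\veps)\dive\uu{u}^\veps = -p^\veps\dive\uu{u}^\veps$, giving exactly \eqref{eq:cont_renorm}.

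For the positive renormalisation identity \eqref{eq:cont_porenorm}, I would differentiate $\Pi_\gamma(\rho^\veps|\tilde{\rho}) = h_\gamma(\rho^\veps) - h_\gamma(\tilde{\rho}) - h_\gamma'(\tilde{\rho})(\rho^\veps - \tilde{\rho})$ in time. Since $\tilde{\rho}$, and hence $h_\gamma'(\tilde{\rho})$, is stationary, only two terms survive, so $\Dt\Pi_\gamma(\rho^\veps|\tilde{\rho}) = \Dt h_\gamma(\rho^\veps) - h_\gamma'(\tilde{\rho})\Dt\rho^\veps$. Substituting \eqref{eq:cont_renorm} for the first term and the mass balance $\Dt\rho^\veps = -\dive(\rho^\veps\uu{u}^\veps)$ for the second yields \eqref{eq:cont_porenorm} at once, with the source $h_\gamma'(\tilde{\rho})\dive(\rho^\veps\uu{u}^\veps)$ on the right.

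The kinetic energy balance \eqref{eq:cont_kinbal} I would obtain from the nonconservative momentum equation $\rho^\veps\big(\Dt\uu{u}^\veps + (\uu{u}^\veps\cdot\grd)\uu{u}^\veps\big) = -\veps^{-2}\grd p^\veps - \veps^{-2}\rho^\veps\grd\phi$, recovered by subtracting $\uu{u}^\veps$ times \eqref{eq:cons_mas} from \eqref{eq:cons_mom}. Taking the scalar product with $\uu{u}^\veps$ and using $\uu{u}^\veps\cdot\Dt\uu{u}^\veps = \tfrac{1}{2}\Dt\abs{\uu{u}^\veps}^2$ together with $\uu{u}^\veps\cdot(\uu{u}^\veps\cdot\grd)\uu{u}^\veps = \tfrac{1}{2}\uu{u}^\veps\cdot\grd\abs{\uu{u}^\veps}^2$ turns the left-hand side, after one further use of \eqref{eq:cons_mas}, into $\Dt\big(\tfrac{1}{2}\rho^\veps\abs{\uu{u}^\veps}^2\big) + \dive\big(\tfrac{1}{2}\rho^\veps\abs{\uu{u}^\veps}^2\uu{u}^\veps\big)$, which is \eqref{eq:cont_kinbal}.

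Finally, the total energy identity \eqref{eq:cont_totbal} follows from the combination \eqref{eq:cont_kinbal} $+\,\veps^{-2}\times$\eqref{eq:cont_porenorm}. The pressure contributions combine as $\veps^{-2}\grd p^\veps\cdot\uu{u}^\veps + \veps^{-2}p^\veps\dive\uu{u}^\veps = \veps^{-2}\dive(p^\veps\uu{u}^\veps)$. \emph{The main obstacle} is to absorb the two surviving right-hand-side contributions, namely $-\veps^{-2}\rho^\veps\grd\phi\cdot\uu{u}^\veps$ from \eqref{eq:cont_kinbal} and $\veps^{-2}h_\gamma'(\tilde{\rho})\dive(\rho^\veps\uu{u}^\veps)$ from \eqref{eq:cont_porenorm}, into conservative form. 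Here the hydrostatic relation \eqref{eq:int_pot} is decisive: taking its gradient gives $\grd h_\gamma'(\tilde{\rho}) = -\grd\phi$. Writing $h_\gamma'(\tilde{\rho})\dive(\rho^\veps\uu{u}^\veps) = \dive\big(h_\gamma'(\tilde{\rho})\rho^\veps\uu{u}^\veps\big) - \rho^\veps\uu{u}^\veps\cdot\grd h_\gamma'(\tilde{\rho})$ and inserting $\grd h_\gamma'(\tilde{\rho}) = -\grd\phi$ makes the gravitational term cancel exactly against $-\veps^{-2}\rho^\veps\grd\phi\cdot\uu{u}^\veps$, leaving only $\veps^{-2}\dive\big(h_\gamma'(\tilde{\rho})\rho^\veps\uu{u}^\veps\big)$. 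Transferring this divergence to the left assembles the full energy flux in \eqref{eq:cont_totbal}. This cancellation, which rests on the ambient state being in hydrostatic equilibrium, is the only step beyond routine calculus.
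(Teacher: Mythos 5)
Your proposal is correct and follows essentially the same route as the paper: the paper dispatches identities \eqref{eq:cont_renorm}--\eqref{eq:cont_kinbal} as straightforward (citing the literature for details, which you simply spell out via the mass balance, the relation \eqref{eq:psi_gm_prop}, and the non-conservative momentum equation), and obtains \eqref{eq:cont_totbal} exactly as you do, by adding $\veps^{-2}$ times \eqref{eq:cont_porenorm} to \eqref{eq:cont_kinbal} and invoking \eqref{eq:int_pot} so that $\grd h_\gamma^{\prime}(\tilde{\rho})=-\grd\phi$ cancels the gravitational term. Your write-up matches the paper's argument, only with the routine calculus made explicit.
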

\begin{proof}
  Proofs of \eqref{eq:cont_renorm}-\eqref{eq:cont_kinbal} are
  straightforward; see, e.g.\ \cite{HLS21} for details in the case of the
  Navier-Stokes system without source terms. We can obtain
  \eqref{eq:cont_totbal} upon multiplying \eqref{eq:cont_porenorm} by
  $\frac{1}{\veps^2}$, adding the resulting equation to
  \eqref{eq:cont_kinbal} and making use of the relation
  \eqref{eq:int_pot}.  
\end{proof}
\begin{remark}
  \label{rem:ent_wk_soln}
  In the case of weak solutions, the identity \eqref{eq:cont_totbal}
  remains as an inequality which represents the decay of mechanical
  energy. Consequently, throughout the rest of this paper, we will
  assume that the system \eqref{eq:cons_mas}-\eqref{eq:eq_ic_bc}
  admits energy stable weak solutions, i.e.\ weak solutions that
  satisfy the estimate 
  \begin{equation}
    \label{eq:cont_tot_entr_est}
    \frac{1}{2}\int_{\Omega}\rho^\veps\absq{\uu{u}^\veps}
    (t,\cdot)\dd\uu{x}
    +\frac{1}{\veps^2}\int_\Omega\Pi_\gamma(\rho^\veps\vert\tilde\rho)
    (t,\cdot)\dd\uu{x}
    \leq
    \frac{1}{2}\int_{\Omega}\rho^\veps_0\absq{\uu{u}^\veps_0}\dd\uu{x}
    +\frac{1}{\veps^2}\int_\Omega\Pi_\gamma(\rho^\veps_0\vert\tilde\rho)
    \dd\uu{x}, \; t\in(0,T) \, \mathrm{a.e.}.
  \end{equation}
\end{remark}
\begin{remark}
  \label{rem:rel_ent}
  The functional
  \begin{equation*}
    \mcal{E}(\rho^\veps,\uu{m}^\veps \vert
    \rho,\uu{u}):=\frac{1}{2}\int_\Omega\rho^\veps\Big\vert\frac{\uu{m}^\veps}{\rho^\veps}
        -\uu{u}\Big\vert^2\dd\uu{x}
    +\frac{1}{\veps^2}\int_\Omega\Pi_\gamma(\rho^\veps|\rho)\dd\uu{x}
  \end{equation*}
  is called the relative energy or entropy which measures the distance
  between a pair of solutions $(\rho^\veps,\uu{u}^\veps)$ and
  $(\rho,\uu{u})$. The relative energy plays a pivotal role in the
  subsequent analyses carried out.   
\end{remark}
  
\subsection{Well-Prepared Initial Data and the Anelastic Limit}
\label{sec:anelastic_limit}
The anelastic limit of the Euler system
\eqref{eq:cons_mas}-\eqref{eq:cons_mom} is obtained by taking limit
$\veps\to0$ of a sequence of weak solutions
$(\rho^\veps,\uu{u}^\veps)_{\veps>0}$. For the sake of simplicity, we
assume that the given initial data are well-prepared in the following
sense. 
\begin{definition}[Well-prepared initial data]
    \label{def:wp_id}
 An initial datum $(\rho^\veps_0, \uu{u}^\veps_0)\in
 L^\infty(Q_T)^{1+d}$ of the Euler system
 \eqref{eq:cons_mas}-\eqref{eq:cons_mom} is called well-prepared if
 \begin{enumerate}[label=(\roman*)]
 \item $\rho^\veps_0>0$ a.e.\ in $Q_T$;
 \item $ \rho^\veps_0=\tilde\rho + \veps r^\veps_0$, where
   $\{r^\veps_0\}_{\veps>0}\subset L^\infty(\Omega)$ is uniformly
   bounded and $r^\veps_0\to 0$ in $L^2(\Omega)$ as $\veps\to 0$;
 \item $\uu{u}^\veps_0\to \uu{U}_0$ in $L^2(\Omega)^d$ as $\veps\to 0$, with
 $\dive(\tilde\rho\uu{U}_0)=0$.
 \end{enumerate} 
\end{definition}
There are many rigorous studies on the anelastic approximation as a
singular limit of hydrodynamic models in the literature; see, e.g.\
\cite{BGL05, BF18a, BF18b, Cha22, FMN+08, FN09, Mas07}. Note that the
right hand side of the energy estimate \eqref{eq:cont_tot_entr_est}
remains uniformly bounded, given the initial data
$(\rho^\veps_0,\uu{u}^\veps_0)$ is well-prepared. Hence, if
$(\rho^\veps,\uu{u}^\veps)$ is an energy stable weak solution of
\eqref{eq:cons_mas}-\eqref{eq:eq_ic_bc} with respect to a
well-prepared initial datum, we have the uniform boundedness of the
relative energy from the estimate \eqref{eq:cont_tot_entr_est}. The
control of the relative internal energy functional $\Pi_\gamma$ leads
to a strong convergence of  the density
$\rho^\veps\to\tilde{\rho}$. Consequently, the uniform bound on the
kinetic energy yields a weak-* convergence of the momentum as
$\veps\to 0$. We summarise the essence of anelastic limit in the
following theorem and refer the reader to the above cited references
for an elaborate treatment.      
\begin{theorem}
  \label{thm:cont_anelastic_lim}
  Let $(\rho^\veps,\uu{m}^\veps)_{\veps>0}$ be a sequence of energy
  stable weak solutions of the Euler system
  \eqref{eq:cons_mas}-\eqref{eq:eq_ic_bc} with respect to a sequence
  of well-prepared initial data
  $(\rho^\veps_0,\uu{m}^\veps_0)_{\veps>0}$. Then the following holds.  
  \begin{enumerate}[label=(\roman*)]
  \item The relative energy $\E(\rho^\veps,\uu{m}^\veps\vert
    \tilde\rho, \uu{U})(t)$ satisfies
    \begin{equation}
      \label{eq:rel_ent_lim}
      \lim_{\veps\to 0}\sup_{t\in(0,T)}\E(\rho^\veps,\uu{m}^\veps\vert
      \tilde\rho, \uu{U})(t)=0;
    \end{equation}
  \item $\rho^\veps\to \tilde\rho$ in
  $L^\infty(0,T;(L^2+L^{\gamma^\prime})(\Omega))$ as $\veps \to 0$, with 
  $\gamma^\prime=\min\{2,\gamma\}$;
  \item  $\frac{\uu{m}^\veps}{\sqrt{\rho^\veps}}\to \sqrt{\tilde{\rho}}\uu{U}$ in
  $L^\infty(0,T;L^2(\Omega)^d)$ and $\uu{m}^\veps\to\tilde\rho\uu{U}$ weak-* in
  $L^\infty(0,T;(L^2+L^{\bar{\gamma}})(\Omega)^d)$ as $\veps\to 0$,
  with $\bar{\gamma}=\min\{\frac{4}{3},\frac{2\gamma}{\gamma+1}\}$,
  \end{enumerate}
  where $\uu{U}$ is the solution of the following anelastic Euler
  system on $Q_T$: 
  \begin{gather}
    \label{eq:anelastic_mom}
    \dive(\tilde{\rho}\uu{U})=0,  \\
    \label{eq:anelastic_vel}
    \Dt(\tilde{\rho}\uu{U})+\bdive(\tilde{\rho}\uu{U}\otimes\uu{U})+\tilde{\rho}\grd
    \pi =\uu{0},\\
    \uu{U}(0,.) = \uu{U}_0,\;\dive{(\tilde\rho\uu{U}_0)}=0.
    \label{eq:anelastic_id}
  \end{gather}
\end{theorem}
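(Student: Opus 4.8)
The plan is to run the relative energy (relative entropy) method with the limit pair $(\tilde\rho,\uu{U})$ as the comparison state. The decisive structural observation is that the total energy balance \eqref{eq:cont_totbal} is \emph{source-free}, the gravitational forcing having been absorbed into the relative internal energy $\Pi_\gamma(\cdot\,\vert\,\tilde\rho)$ through the hydrostatic balance \eqref{eq:hydrostat}. The starting point is therefore the energy inequality \eqref{eq:cont_tot_entr_est}, which already controls $\frac12\int_\Omega\rho^\veps\absq{\uu{u}^\veps}+\frac1{\veps^2}\int_\Omega\Pi_\gamma(\rho^\veps\vert\tilde\rho)$, and which moreover gives the uniform bounds $\E(\rho^\veps,\uu{m}^\veps\vert\tilde\rho,\uu{U})\leq C$ and $\norm{\rho^\veps-\tilde\rho}_{L^2}\leq C\veps$ directly from the well-preparedness of the data. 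Writing $\E(\rho^\veps,\uu{m}^\veps\vert\tilde\rho,\uu{U})=[\text{energy}]-\int_\Omega\rho^\veps\uu{u}^\veps\cdot\uu{U}+\frac12\int_\Omega\rho^\veps\absq{\uu{U}}$, I would differentiate the two cross terms in time, using the mass and momentum weak formulations \eqref{eq:cont_weak_soln_mas}--\eqref{eq:cont_weak_soln_mom} for $(\rho^\veps,\uu{u}^\veps)$ and the anelastic system \eqref{eq:anelastic_mom}--\eqref{eq:anelastic_vel} for $\uu{U}$, to obtain a relative energy inequality
\begin{equation*}
  \E(\rho^\veps,\uu{m}^\veps\vert\tilde\rho,\uu{U})(\tau)\leq
  \E(\rho^\veps_0,\uu{m}^\veps_0\vert\tilde\rho,\uu{U}_0)
  +\int_0^\tau\mcal{R}(\rho^\veps,\uu{u}^\veps,\tilde\rho,\uu{U})\,\dd t .
\end{equation*}

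Next I would decompose the remainder $\mcal{R}$ into a convective part, a pressure part, and an anelastic-pressure part. After substituting $\Dt\uu{U}=-(\uu{U}\cdot\grd)\uu{U}-\grd\pi$ from \eqref{eq:anelastic_vel}, the convective terms collapse into $-\int_\Omega\rho^\veps(\uu{u}^\veps-\uu{U})\otimes(\uu{u}^\veps-\uu{U})\colon\bgrd\uu{U}$, bounded by $\norm{\bgrd\uu{U}}_{L^\infty}\E$. The crux is the pressure part $-\frac1{\veps^2}\int_\Omega\wp(\rho^\veps)\dive\uu{U}+\frac1{\veps^2}\int_\Omega\rho^\veps\grd\phi\cdot\uu{U}$. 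Splitting $\wp(\rho^\veps)=[\wp(\rho^\veps)-\wp(\tilde\rho)-\wp'(\tilde\rho)(\rho^\veps-\tilde\rho)]+\wp(\tilde\rho)+\wp'(\tilde\rho)(\rho^\veps-\tilde\rho)$, the bracketed relative pressure is dominated by $\Pi_\gamma(\rho^\veps\vert\tilde\rho)$ and contributes a further $\norm{\dive\uu{U}}_{L^\infty}\E$ term. The genuinely singular $O(\veps^{-2})$ contributions then cancel exactly: the pure-$\tilde\rho$ terms vanish since $\int_\Omega\wp(\tilde\rho)\dive\uu{U}=-\int_\Omega\grd\tilde{p}\cdot\uu{U}=\int_\Omega\tilde\rho\grd\phi\cdot\uu{U}$ by \eqref{eq:hydrostat}, while the terms linear in $(\rho^\veps-\tilde\rho)$ collapse through
\begin{equation*}
  -\wp'(\tilde\rho)\dive\uu{U}+\grd\phi\cdot\uu{U}
  =-\frac{\wp'(\tilde\rho)}{\tilde\rho}\big(\tilde\rho\dive\uu{U}+\grd\tilde\rho\cdot\uu{U}\big)
  =-\frac{\wp'(\tilde\rho)}{\tilde\rho}\dive(\tilde\rho\uu{U})=0,
\end{equation*}
using $\wp'(\tilde\rho)\grd\tilde\rho=-\tilde\rho\grd\phi$ from \eqref{eq:hydrostat} together with the anelastic constraint \eqref{eq:anelastic_mom}. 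Hence every $O(\veps^{-2})$ term disappears, leaving $\mcal{R}\leq C\big(\norm{\bgrd\uu{U}}_{L^\infty}+\norm{\dive\uu{U}}_{L^\infty}\big)\E$ plus the anelastic-pressure term.

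The main obstacle, and the only term not controlled by $\E$ pointwise in time, is the anelastic-pressure contribution $\int_\Omega\rho^\veps(\uu{u}^\veps-\uu{U})\cdot\grd\pi=\int_\Omega(\uu{m}^\veps-\rho^\veps\uu{U})\cdot\grd\pi$. Here I would \emph{not} estimate pointwise but integrate over $(0,\tau)$ and exploit mass conservation. The piece $\int_\Omega\rho^\veps\uu{U}\cdot\grd\pi=\int_\Omega(\rho^\veps-\tilde\rho)\uu{U}\cdot\grd\pi$ (the $\tilde\rho$-part vanishing by \eqref{eq:anelastic_mom}) is $O(\veps)$ from the density control $\norm{\rho^\veps-\tilde\rho}_{L^2}\leq C\veps$. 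For the piece $\int_0^\tau\int_\Omega\uu{m}^\veps\cdot\grd\pi\,\dd t$, integration by parts in space and the mass equation give $\int_0^\tau\int_\Omega\Dt(\rho^\veps-\tilde\rho)\,\pi\,\dd t$, and a further integration by parts in time reduces it to endpoint and $\Dt\pi$ terms, each carrying a factor $(\rho^\veps-\tilde\rho)=O(\veps)$. Thus the entire anelastic-pressure term is $O(\veps)$. Combined with $\E(\rho^\veps_0,\uu{m}^\veps_0\vert\tilde\rho,\uu{U}_0)\to0$ (from Definition \ref{def:wp_id}, since $\frac1{\veps^2}\int_\Omega\Pi_\gamma(\rho^\veps_0\vert\tilde\rho)\approx\frac12\int_\Omega h_\gamma''(\tilde\rho)\abs{r^\veps_0}^2\to0$ and $\uu{u}^\veps_0\to\uu{U}_0$ in $L^2$), Gronwall's lemma yields $\sup_{t\in(0,T)}\E(\rho^\veps,\uu{m}^\veps\vert\tilde\rho,\uu{U})\leq\big(\E(0)+O(\veps)\big)e^{CT}\to0$, which is \eqref{eq:rel_ent_lim}.

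Finally, to pass from the vanishing of the relative energy to the convergences (ii) and (iii), I would invoke the coercivity of $\Pi_\gamma$: a Taylor expansion gives $\Pi_\gamma(\rho\vert\tilde\rho)\geq c\,\abs{\rho-\tilde\rho}^2$ where $\rho$ remains near $\tilde\rho$ (the ``essential'' region) and $\Pi_\gamma(\rho\vert\tilde\rho)\geq c\,\rho^\gamma$ where $\rho$ is large (the ``residual'' region). Splitting $\rho^\veps-\tilde\rho$ accordingly and using $\frac1{\veps^2}\int_\Omega\Pi_\gamma\to0$ gives the $L^\infty(0,T;(L^2+L^{\gamma'})(\Omega))$ convergence in (ii) with $\gamma'=\min\{2,\gamma\}$. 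The kinetic part $\frac12\int_\Omega\rho^\veps\abs{\uu{u}^\veps-\uu{U}}^2\to0$ yields $\uu{m}^\veps/\sqrt{\rho^\veps}=\sqrt{\rho^\veps}\,\uu{u}^\veps\to\sqrt{\tilde\rho}\,\uu{U}$ in $L^\infty(0,T;L^2(\Omega)^d)$; writing $\uu{m}^\veps=\sqrt{\rho^\veps}\cdot\sqrt{\rho^\veps}\,\uu{u}^\veps$ and combining with the density convergence via Hölder's inequality produces the stated weak-$*$ convergence $\uu{m}^\veps\to\tilde\rho\uu{U}$ in $L^\infty(0,T;(L^2+L^{\bar\gamma})(\Omega)^d)$ with $\bar\gamma=\min\{\tfrac43,\tfrac{2\gamma}{\gamma+1}\}$.
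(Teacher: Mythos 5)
Your proposal is correct in outline, but note that the paper does not actually prove Theorem \ref{thm:cont_anelastic_lim}: the authors only sketch the consequences of the uniform energy bound (well-prepared data keep the right hand side of \eqref{eq:cont_tot_entr_est} bounded, the control of $\Pi_\gamma$ gives strong convergence of $\rho^\veps$, the kinetic energy bound gives weak-* convergence of $\uu{m}^\veps$) and then defer the ``elaborate treatment'' to the cited references on singular limits. What you have written is precisely the relative-energy (weak--strong) argument that those references carry out, so you are supplying the content the paper omits rather than paralleling it. Your structural computations are sound and in fact mirror the one piece of this analysis the paper does perform, namely the proof of Corollary \ref{cor:cont_stiff_lim}: the splitting $p^\veps-\tilde p=(\gamma-1)\Pi_\gamma(\rho^\veps\vert\tilde\rho)+\wp^\prime(\tilde\rho)(\rho^\veps-\tilde\rho)$, the exact cancellation of the $O(\veps^{-2})$ contributions through the hydrostatic balance \eqref{eq:hydrostat} combined with the anelastic constraint $\dive(\tilde\rho\uu{U})=0$, and the treatment of the $\grd\pi$ term via mass conservation and integration by parts in time are exactly the mechanisms that make the limit work, and your Gronwall closure from them is the standard one.

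Two caveats you should make explicit. First, the Gronwall step needs $\bgrd\uu{U}$, $\dive\uu{U}$, $\grd\pi$ and $\Dt\pi$ in $L^\infty(Q_T)$; that is, the anelastic system \eqref{eq:anelastic_mom}--\eqref{eq:anelastic_id} is assumed to possess a sufficiently regular solution on $[0,T]$. This regularity is implicit in the theorem's phrase ``the solution of the anelastic Euler system'' and is what allows you to use $(\uu{U},\pi)$ as test functions in \eqref{eq:cont_weak_soln_mas}--\eqref{eq:cont_weak_soln_mom}, which themselves must be upgraded (by the usual approximation of the indicator of $[0,\tau]$) so that the cross terms can be evaluated at an arbitrary a.e.\ time $\tau$. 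Second, the bound $\norm{\rho^\veps-\tilde\rho}_{L^2}\leq C\veps$ that you invoke for the anelastic-pressure term holds only on the essential set where $\rho^\veps$ remains bounded; on the residual set the energy bound yields only $L^\gamma$-type control of size $O(\veps^{2/\gamma})$, so that term is $o(1)$ rather than $O(\veps)$ in general. This does not damage the Gronwall argument, but the essential/residual splitting you postpone to part (ii) is already needed at that earlier point.
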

For the subsequent calculations, we note the following result which
shows that the stiff terms in the momentum balance \eqref{eq:cons_mom}
indeed converges to a term $\tilde\rho\grd\pi$ as $\veps\to 0$ in the
sense of distributions. A discrete counterpart of the same is
essential to establish the AP property of the numerical scheme.    
\begin{corollary}
  \label{cor:cont_stiff_lim}
  Let $(\rho^\veps,\uu{m}^\veps)_{\veps>0}$ be a sequence of energy
  stable weak solutions of the Euler system
  \eqref{eq:cons_mas}-\eqref{eq:eq_ic_bc} with respect to a sequence
  of well-prepared initial data $(\rho^\veps_0,\uu{m}^\veps_0)$. Then,
  as $\veps\to0$,
  \begin{equation*}
    \frac{1}{\veps^2}\iint\limits_{Q_T}(\grd
    p^\veps+\rho^\veps\grd\phi)\cdot\uu{\psi}\dd\uu{x}\dd t \to 0
    \ \mbox{for any} \ \uu{\psi}\in C^\infty_c([0,T]\times\bar\Omega) \
    \mbox{with} \ \dive(\tilde{\rho}\uu{\psi})=0. 
  \end{equation*}
\end{corollary}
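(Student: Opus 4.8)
The plan is to exploit the hydrostatic balance \eqref{eq:hydrostat} together with the anelastic constraint $\dive(\tilde{\rho}\uu{\psi})=0$ to expose an exact cancellation of the formally $O(1/\veps)$ part of the stiff force, and then to dominate the surviving quadratic remainder by the relative internal energy, which vanishes by Theorem~\ref{thm:cont_anelastic_lim}. First I would subtract the identically vanishing hydrostatic force: rewriting \eqref{eq:hydrostat} as $\grd\tilde{p}+\tilde{\rho}\grd\phi=0$ and testing against $\uu{\psi}$, an integration by parts in space (the boundary terms vanishing by periodicity, or under the no-penetration condition $\uu{\psi}\cdot\uu{\nu}=0$) recasts the target as
\[
  I^\veps:=\frac{1}{\veps^2}\iint_{Q_T}(\grd p^\veps+\rho^\veps\grd\phi)\cdot\uu{\psi}\,\dd\uu{x}\dd t
  =\frac{1}{\veps^2}\iint_{Q_T}\Big[-(p^\veps-\tilde{p})\dive\uu{\psi}+(\rho^\veps-\tilde{\rho})\grd\phi\cdot\uu{\psi}\Big]\dd\uu{x}\dd t.
\]
Keeping the derivatives on the smooth data $\uu{\psi},\phi,\tilde{\rho}$ is essential, since for weak solutions no control on $\grd\rho^\veps$ is available.

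The heart of the argument is the cancellation. I would split the pressure difference into its linear and quadratic parts via the relative pressure $\mcal{P}(\rho^\veps\vert\tilde{\rho}):=p^\veps-\tilde{p}-\wp^{\prime}(\tilde{\rho})(\rho^\veps-\tilde{\rho})$. Differentiating \eqref{eq:psi_gm_prop} gives $\wp^{\prime}(\rho)=\rho h_\gamma^{\prime\prime}(\rho)$, while differentiating \eqref{eq:int_pot} gives $\grd\phi=-h_\gamma^{\prime\prime}(\tilde{\rho})\grd\tilde{\rho}$. Substituting both, the terms linear in $(\rho^\veps-\tilde{\rho})$ combine into
\[
  (\rho^\veps-\tilde{\rho})\big[-\wp^{\prime}(\tilde{\rho})\dive\uu{\psi}+\grd\phi\cdot\uu{\psi}\big]
  =-h_\gamma^{\prime\prime}(\tilde{\rho})(\rho^\veps-\tilde{\rho})\big[\tilde{\rho}\dive\uu{\psi}+\grd\tilde{\rho}\cdot\uu{\psi}\big]
  =-h_\gamma^{\prime\prime}(\tilde{\rho})(\rho^\veps-\tilde{\rho})\dive(\tilde{\rho}\uu{\psi}),
\]
which is identically zero because $\dive(\tilde{\rho}\uu{\psi})=0$. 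Hence only the quadratic remainder survives, $I^\veps=-\frac{1}{\veps^2}\iint_{Q_T}\mcal{P}(\rho^\veps\vert\tilde{\rho})\dive\uu{\psi}\,\dd\uu{x}\dd t$.

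To close, I would invoke the standard relative-energy domination $\abs{\mcal{P}(\rho^\veps\vert\tilde{\rho})}\le C\,\Pi_\gamma(\rho^\veps\vert\tilde{\rho})$, valid uniformly in $\veps$ thanks to the uniform $L^\infty$ bound on $\rho^\veps$ and the strict convexity of $h_\gamma$ (cf.\ \cite{FN09}). This yields
\[
  \abs{I^\veps}\le\frac{C\norm{\dive\uu{\psi}}_{L^\infty}}{\veps^2}\iint_{Q_T}\Pi_\gamma(\rho^\veps\vert\tilde{\rho})\,\dd\uu{x}\dd t
  \le C\,T\,\norm{\dive\uu{\psi}}_{L^\infty}\sup_{t\in(0,T)}\E(\rho^\veps,\uu{m}^\veps\vert\tilde{\rho},\uu{U})(t),
\]
because $\frac{1}{\veps^2}\int_\Omega\Pi_\gamma(\rho^\veps\vert\tilde{\rho})(t,\cdot)\dd\uu{x}$ is precisely the nonnegative internal-energy part of $\E$. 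By \eqref{eq:rel_ent_lim} the right-hand side tends to $0$ as $\veps\to0$, so $I^\veps\to0$.

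The hard part will be the algebraic cancellation: one must bring the hydrostatic identity, the thermodynamic relation $\wp^{\prime}=\rho h_\gamma^{\prime\prime}$, and the anelastic constraint together to see that the $O(1/\veps)$ contribution is \emph{exactly} zero; otherwise the estimate yields only boundedness of $I^\veps$ rather than convergence. A secondary technical point is the uniform-in-$\veps$ relative-pressure bound $\abs{\mcal{P}}\le C\,\Pi_\gamma$, which rests on the uniform density bounds and the convexity of $h_\gamma$.
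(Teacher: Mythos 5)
Your proposal is correct and follows essentially the same route as the paper's proof: add and subtract the hydrostatic balance, integrate by parts onto the smooth test function, observe that the terms linear in $(\rho^\veps-\tilde\rho)$ assemble into a multiple of $\dive(\tilde\rho\uu{\psi})=0$ and hence cancel exactly, and then control the surviving quadratic part by the relative energy, which vanishes by \eqref{eq:rel_ent_lim}. One minor remark: for the $\gamma$-law your relative pressure obeys the exact identity $\mcal{P}(\rho^\veps\vert\tilde\rho)=(\gamma-1)\Pi_\gamma(\rho^\veps\vert\tilde\rho)$ (which is precisely how the paper writes the decomposition), so the domination step needs no uniform $L^\infty$ bound on $\rho^\veps$ --- a bound that is in fact not available for the weak solutions considered here --- and your appeal to it can simply be dropped.
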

\begin{proof}
  Let, $\uu{\psi}\in
    C^\infty_c([0,T]\times\bar\Omega)^d$ such that
    $\dive(\tilde\rho\uu{\psi})=0$. Adding and subtracting the
    hydrostatic pressure gradient $\grd\tilde{p}$ and subsequently
    recalling the steady state relation \eqref{eq:hydrostat}, we obtain 
    \begin{equation}
      \label{eq:cont_stiff_lim}
      \begin{aligned}
      \frac{1}{\veps^2}\iint\limits_{Q_T}(\grd
      p^\veps+\rho^\veps\grd\phi)\cdot\uu\psi\dd\uu{x}\dd t
      &= \frac{1}{\veps^2}\iint\limits_{Q_T}\grd(
      p^\veps-\tilde{p})\cdot\uu\psi\dd\uu{x}\dd t 
      +\frac{1}{\veps^2}\iint\limits_{Q_T}(\rho^\veps-\tilde{\rho})
      \grd\phi\cdot\uu\psi\dd\uu{x}\dd t\\
      &=
      -\frac{1}{\veps^2}\iint\limits_{Q_T}(p^\veps-\tilde p)\dive\uu{\psi}\dd\uu{x}\dd t
      +\frac{1}{\veps^2}\iint\limits_{Q_T}(\rho^\veps-\tilde\rho)
      \grd\phi\cdot\uu{\psi}\dd\uu{x}\dd t \\
      &=
      -\frac{\gamma-1}{\veps^2}\iint\limits_{Q_T}
      \Pi_\gamma(\rho^\veps\vert\tilde\rho)\dive\uu{\psi}\dd\uu{x}\dd t
      -\frac{1}{\veps^2}\iint\limits_{Q_T}\wp^\prime(\tilde{\rho})(\rho^\veps-\tilde{\rho})
      \dive\uu{\psi}\dd\uu{x}\dd t\\
      &\quad-\frac{1}{\veps^2}\iint\limits_{Q_T}(\rho^\veps-\tilde\rho)
      \frac{\grd\tilde{p}}{\tilde{\rho}}\cdot\uu{\psi}\dd\uu{x}\dd t\\
      &=-\frac{\gamma-1}{\veps^2}\iint\limits_{Q_T}
      \Pi_\gamma(\rho^\veps\vert\tilde\rho)\dive\uu{\psi}
      \dd\uu{x}\dd t
      -\frac{1}{\veps^2}\iint\limits_{Q_T}(\rho^\veps-\tilde{\rho})
      \frac{\wp^\prime(\tilde{\rho})}{\tilde{\rho}}\dive(\tilde{\rho}\uu{\psi})
      \dd\uu{x}\dd t.
     \end{aligned}
   \end{equation}
   Note that the first term on right hand side of the above relation
   converges to $0$ as $\veps\to 0$, following
   \eqref{eq:rel_ent_lim}. The second term on the right hand side
   vanishes as $\dive(\tilde\rho\uu{\psi})=0$.  
\end{proof}

\subsection{Velocity Stabilisation and Apriori Energy Estimates}
\label{subsec:apr_est}
The present work is aimed to design and analyse a finite volume
scheme that is energy stable, well-balanced for the hydrostatic
steady state and AP for the anelastic limit. The key to achieve these
desirable properties is to add a stabilising term to the convective
fluxes appearing in
\eqref{eq:cons_mas}-\eqref{eq:cons_mom}. Accordingly, we consider the 
modified system  
\begin{align}
  \D_t\rho^\veps+\dive (\rho^\veps(\uu{u}^\veps - \delta\uu{u}^\veps)) &=
                                                  0, \label{eq:cont_mas_stab}\\ 
  \D_t(\rho^\veps\uu{u}^\veps)+\bdive
  (\rho^\veps\uu{u}^\veps\otimes(\uu{u}^\veps - \delta\uu{u}^\veps))
  +\frac{1}{\veps^2}\grd
  p^\veps &=-\frac{1}{\veps^2}\rho^\veps\grd\phi. \label{eq:cont_mom_stab}  
\end{align}
We supplement the above system with the same initial and boundary
conditions \eqref{eq:eq_ic_bc}.
The expression for the stabilisation term $\delta\uu{u}^\veps$ will be 
derived using the total energy balance in such a way that the
stability or the decay of mechanical energy of the solutions is
ensured; see \cite{AGK23,AGK24,DVB17,DVB20} for analogous
considerations for the Euler systems with and without source
terms. The implications of the energy stability in terms of the
relative energy are multifold. Since the relative energy is defined
with respect to the hydrostatic steady state, an adaptation of the
same for a finite volume scheme first leads to the well-balancing
property. Second, it plays a crucial role in establishing the
asymptotic limit $\veps \to 0$ by providing uniform estimates on the
relative internal energy $\Pi_\gamma$ and the velocity components with
respect to $\veps$. In the following, we establish that the modified
system \eqref{eq:cont_mas_stab}-\eqref{eq:cont_mom_stab} is also
energy stable and that it admits the same hydrostatic steady
states. These observations form the basis for our analysis carried out
in Sections \ref{sec:enstab_wb} and
\ref{sec:dis_anelastic_limit}. Similar calculations as in
Proposition~\ref{prop:cont_en_id} readily yields the energy identities
satisfied by the velocity stabilised Euler system
\eqref{eq:cont_mas_stab}-\eqref{eq:cont_mom_stab}.  
\begin{proposition}
  \label{prop:engy_balance}
 The following identities are satisfied by classical solutions of
 \eqref{eq:cont_mas_stab}-\eqref{eq:cont_mom_stab}.
  \begin{enumerate}[label=(\roman*)]
  \item A renormalisation identity:
    \begin{equation}
      \label{eq:renorm}
      \Dt h_\gamma(\rho^\veps)
      +\dive\left(h_\gamma(\rho^\veps)(\uu{u}^\veps
        -\delta\uu{u}^\veps)\right)+p^\veps\dive(\uu{u}^\veps-\delta\uu{u}^\veps)=0.
    \end{equation}
  \item A positive renormalisation identity:
    \begin{equation}
      \label{eq:positive_renorm}
      \Dt\Pi_\gamma(\rho^\veps|\tilde{\rho})+\dive\big(h_\gamma(\rho^\veps)
      (\uu{u}^\veps-\delta\uu{u}^\veps)\big)
      +p^\veps\dive(\uu{u^\veps}-\delta\uu{u}^\veps)
      =h^\prime_\gamma(\tilde{\rho})\dive
      \left(\rho^\veps(\uu{u}^\veps-\delta\uu{u}^\veps)\right).
      \end{equation}
  \item A kinetic energy identity:
    \begin{equation}
      \label{eq:kinbal}
      \begin{aligned}
      \Dt\Big(\frac{1}{2}\rho^\veps{\abs{\uu{u}^\veps}}^2\Big)
      +\dive\Big(\frac{1}{2}\rho^\veps{\abs{\uu{u}^\veps}}^2
      (\uu{u}^\veps-\delta\uu{u}^\veps)\Big)
      +\frac{1}{\veps^2}\grd p^\veps\cdot(\uu{u}^\veps
      -\delta\uu{u}^\veps) &=-\frac{1}{\veps^2}
      \rho^\veps\grd\phi\cdot(\uu{u}^\veps-\delta\uu{u}^\veps)
      \\
      &-\frac{1}{\veps^2}\delta\uu{u}^\veps
      \cdot(\grd p^\veps + \rho^{\veps}\grd\phi).
      \end{aligned}
      \end{equation}
      \item The total energy identity:
    \begin{equation}
  \begin{aligned}
    \label{eq:cont_energy_balance}
    &\Dt\Big(\frac{1}{\veps^2}\Pi_\gm(\rho^\veps\vert\tilde\rho)
    +\frac{1}{2}\rho^\veps{\abs{\uu{u}^\veps}}^2\Big)
   \\
   &+\dive\bigg(\Big(\half\rho^\veps{\abs{\uu{u}^\veps}}^2
   +\frac{1}{\veps^2}h_\gamma(\rho^\veps)
   +\frac{1}{\veps^2}p^\veps-\frac{1}{\veps^2}h_\gamma^\prime(\tilde{\rho})
   \rho^\veps\Big)(\uu{u}^\veps-\delta\uu{u}^\veps)\bigg)
   =-\frac{1}{\veps^2}\delta\uu{u}^\veps\cdot(\grd p^\veps + \rho^{\veps}\grd\phi).
   \end{aligned}
  \end{equation}
    \end{enumerate}
\end{proposition}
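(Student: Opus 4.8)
The plan is to mirror the proof of Proposition~\ref{prop:cont_en_id}, exploiting the fact that the velocity-stabilised system \eqref{eq:cont_mas_stab}--\eqref{eq:cont_mom_stab} has exactly the same structure as the original system once the convective transport velocity is taken to be the shifted velocity $\uu{w}^\veps := \uu{u}^\veps - \delta\uu{u}^\veps$. I would first establish the renormalisation identity \eqref{eq:renorm}. Writing the stabilised mass balance as $\Dt\rho^\veps + \dive(\rho^\veps\uu{w}^\veps) = 0$ and applying the chain rule to a generic smooth $b(\rho^\veps)$ gives $\Dt b(\rho^\veps) + \dive(b(\rho^\veps)\uu{w}^\veps) + (\rho^\veps b'(\rho^\veps) - b(\rho^\veps))\dive\uu{w}^\veps = 0$; choosing $b = h_\gamma$ and invoking the algebraic relation \eqref{eq:psi_gm_prop}, namely $\rho h_\gamma'(\rho) - h_\gamma(\rho) = \wp(\rho)$, converts the last term into $p^\veps\dive\uu{w}^\veps$, which is precisely \eqref{eq:renorm}.

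For the positive renormalisation identity \eqref{eq:positive_renorm}, since the hydrostatic density $\tilde{\rho}$ (and hence $h_\gamma(\tilde\rho)$ and $h_\gamma'(\tilde\rho)$) is time-independent, differentiating $\Pi_\gamma(\rho^\veps|\tilde\rho)$ yields $\Dt\Pi_\gamma = \Dt h_\gamma(\rho^\veps) - h_\gamma'(\tilde\rho)\Dt\rho^\veps$; replacing $\Dt\rho^\veps$ by $-\dive(\rho^\veps\uu{w}^\veps)$ and then adding \eqref{eq:renorm} produces exactly the source term $h_\gamma'(\tilde\rho)\dive(\rho^\veps\uu{w}^\veps)$ on the right. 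The kinetic energy identity \eqref{eq:kinbal} follows from taking the dot product of the stabilised momentum balance \eqref{eq:cont_mom_stab} with $\uu{u}^\veps$; the inertial terms $\uu{u}^\veps\cdot\big(\Dt(\rho^\veps\uu{u}^\veps) + \bdive(\rho^\veps\uu{u}^\veps\otimes\uu{w}^\veps)\big)$ collapse, after using the mass balance, to the conservative form $\Dt(\tfrac12\rho^\veps\absq{\uu{u}^\veps}) + \dive(\tfrac12\rho^\veps\absq{\uu{u}^\veps}\uu{w}^\veps)$. The one genuinely stabilisation-specific step is that the pressure work and gravity terms first appear tested against $\uu{u}^\veps$ rather than $\uu{w}^\veps$; writing $\uu{u}^\veps = \uu{w}^\veps + \delta\uu{u}^\veps$ and transferring the $\delta\uu{u}^\veps$ contributions to the right-hand side is precisely what generates the residual stabilisation work $-\frac{1}{\veps^2}\delta\uu{u}^\veps\cdot(\grd p^\veps + \rho^\veps\grd\phi)$.

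Finally, the total energy identity \eqref{eq:cont_energy_balance} is obtained exactly as in Proposition~\ref{prop:cont_en_id}: I would scale \eqref{eq:positive_renorm} by $\tfrac{1}{\veps^2}$ and add it to \eqref{eq:kinbal}. The two pressure contributions $\frac{1}{\veps^2}p^\veps\dive\uu{w}^\veps$ and $\frac{1}{\veps^2}\grd p^\veps\cdot\uu{w}^\veps$ recombine into $\frac{1}{\veps^2}\dive(p^\veps\uu{w}^\veps)$, while the renormalisation source $\frac{1}{\veps^2}h_\gamma'(\tilde\rho)\dive(\rho^\veps\uu{w}^\veps)$ and the gravity work $-\frac{1}{\veps^2}\rho^\veps\grd\phi\cdot\uu{w}^\veps$ must be merged. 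Here I would invoke the hydrostatic relation \eqref{eq:int_pot}, which upon differentiation gives $\grd h_\gamma'(\tilde\rho) = -\grd\phi$; this is exactly what permits these two terms to be rewritten as the single flux divergence $\frac{1}{\veps^2}\dive\big(h_\gamma'(\tilde\rho)\rho^\veps\uu{w}^\veps\big)$, which is then absorbed into the transport flux. The surviving right-hand side is the stabilisation work inherited unchanged from \eqref{eq:kinbal}.

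I expect no serious obstacle, since all four identities are exact chain-rule manipulations valid for classical solutions; the only real care points are the faithful bookkeeping of the stabilisation term $-\frac{1}{\veps^2}\delta\uu{u}^\veps\cdot(\grd p^\veps + \rho^\veps\grd\phi)$ throughout the computation and the timely application of \eqref{eq:int_pot} so that the gravitational source merges cleanly into a conservative flux rather than leaving a spurious non-divergence remainder. This residual term is the whole point of the velocity shift: it is the quantity that will subsequently be sign-controlled by a judicious choice of $\delta\uu{u}^\veps$ to guarantee the decay of mechanical energy.
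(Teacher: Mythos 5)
Your proposal is correct and follows essentially the same route as the paper, which simply states that the identities follow from the same calculations as in Proposition~\ref{prop:cont_en_id} (chain rule on the mass balance with \eqref{eq:psi_gm_prop}, dotting the momentum balance with $\uu{u}^\veps$, then scaling \eqref{eq:positive_renorm} by $1/\veps^2$, adding \eqref{eq:kinbal}, and invoking \eqref{eq:int_pot} to merge the gravity work and the $h_\gamma'(\tilde\rho)$ source into a conservative flux). Your write-up just makes explicit the bookkeeping — including the transport velocity shift $\uu{w}^\veps=\uu{u}^\veps-\delta\uu{u}^\veps$ and the residual stabilisation work term — that the paper leaves to the reader.
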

\begin{remark}
  Thus, at the continuous level, we immediately see that if we formally take 
  \begin{equation}
    \label{eq:delta_u_eps}
    \delta\uu{u}^\veps=\frac{\eta^\veps}{\veps^2}(\grd p^\veps +
    \rho^\veps\grd\phi), \ \eta^\veps>0,
  \end{equation}   
  then we get the energy stability inequality:
  \begin{equation}
    \label{eq:r_eng_id_stab}
    \Dt\Big(\frac{1}{\veps^2}\Pi_\gm(\rho^\veps\vert\tilde\rho)
    +\frac{1}{2}\rho^\veps{\abs{\uu{u}^\veps}}^2\Big)
    +\dive\left(\Big(\half\rho^\veps{\abs{\uu{u}^\veps}}^2
      +\frac{1}{\veps^2}h_\gamma(\rho^\veps)+\frac{1}{\veps^2}p^\veps
      -\frac{1}{\veps^2}h_\gamma^\prime(\tilde{\rho})\rho^\veps\Big)
      (\uu{u}^\veps-\delta\uu{u}^\veps)\right)\leq 0.
\end{equation}
\end{remark}

\subsection{Hydrostatic Steady States of the Velocity Stabilised
  System} 
The steady states of the velocity stabilised system
\eqref{eq:cont_mas_stab}-\eqref{eq:cont_mom_stab} are given by
\begin{align}
  \dive (\rho^\veps(\uu{u}^\veps - \delta\uu{u}^\veps)) &=0,
                                                          \label{eq:cont_mas_steady}\\ 
  \bdive (\rho^\veps\uu{u}^\veps\otimes(\uu{u}^\veps -
  \delta\uu{u}^\veps))+\frac{1}{\veps^2}\grd p^\veps
&=-\frac{1}{\veps^2}\rho^\veps\grd\phi. \label{eq:cont_mom_steady}
\end{align}
Since we are interested in hydrostatic steady states, we put
$\uu{u^\veps}=\uu{0}$ in the above and readily infer from the momentum 
equation \eqref{eq:cont_mom_steady} that 
\begin{equation}
     \label{eq:hydstat_stab}
     \grd p^\veps = -\rho^\veps\grd\phi.
\end{equation}
Consequently, we get $\delta\uu{u}^\veps=\uu{0}$ using
\eqref{eq:delta_u_eps}. Hence, the mass equation
\eqref{eq:cont_mas_steady} implies that the hydrostatic steady state
solution $\rho^\veps=\tilde{\rho}$ and $\uu{u}^\veps=\uu{0}$ of the
Euler system \eqref{eq:cons_mas}-\eqref{eq:cons_mom} is also a
stationary solution of the modified Euler system
\eqref{eq:cont_mas_stab}-\eqref{eq:cont_mom_stab}. Note that the
velocity stabilisation term $\delta\uu{u}^\veps$ is consistent with
the steady state in the sense that it vanishes for steady state
solutions. Furthermore, to obtain the energy stability of the
solutions of the modified system
\eqref{eq:cont_mas_stab}-\eqref{eq:cont_mom_stab}, we enforce the
steady state state condition \eqref{eq:hydstat_stab} also on the
boundary of the domain, i.e.\
\begin{equation}
     \label{eq:hydstat_stab_bdry}
     \grd p^\veps +\rho^\veps\grd\phi=\uu{0} \ \mbox{on} \ \D\Omega.
\end{equation}
\begin{remark}
  Integrating \eqref{eq:r_eng_id_stab} on $Q_T$ and using the above
  boundary conditions yields the following energy stability for any $t
  \in (0,T)$: 
  \begin{equation}
    \label{eq:cont_toten_int}
    \half\int_{\Omega}\rho^\veps\absq{\uu{u}^{\veps}}(t,\cdot)\dd\uu{x}
    +\frac{1}{\veps^2}\int_{\Omega}\Pi_\gamma
    (\rho^\veps(t,\cdot)\vert\tilde{\rho})\dd\uu{x}\leq
    \half\int_{\Omega}\rho^\veps_0\absq{\uu{u}^{\veps}_0}\dd\uu{x}
    +\frac{1}{\veps^2}\int_{\Omega}\Pi_\gamma(\rho^\veps_0\vert\tilde{\rho})\dd\uu{x}.
  \end{equation}
  Suppose the initial data is in hydrostatic state, i.e.\ $\rho^\veps_0=\tilde{\rho}, \,
  \uu{u}^\veps_0=\uu{0}$. Since $\Pi_\gamma$ is positive and
  $\Pi_\gamma(a\vert b)=0$ if and only if $a=b$ for any
  $a,b\in\mbb{R}$, the right hand side of \eqref{eq:cont_toten_int}
  vanishes. Since the terms on the left hand side are non-negative, we
  immediately get  $\rho^\veps(t,\cdot)=\tilde{\rho}$ and
  $\uu{u}^\veps(t,\cdot)=\uu{0}$ for all $t>0$. In other words, the
  stabilised system preserves the hydrostatic steady state, provided
  the initial data are hydrostatic. The well-balancing property of our
  finite volume scheme follows an adaptation of this result that is
  ensured by the discrete energy stability.
 \end{remark}

\section{Space Discretisation and Discrete Differential Operators}
\label{sec:MAC_disc_diff}

In order to approximate the velocity stabilised Euler system
\eqref{eq:cont_mas_stab}-\eqref{eq:cont_mom_stab} in a finite volume
framework, we take a computational space-domain $\Omega\subseteq
\mbb{R}^d$ such that the closure of $\Omega$ is the union of closed
rectangles ($d=2$) or closed orthogonal parallelepipeds ($d=3$). In
this section, we briefly introduce a discretisation of the domain
$\Omega$ using a marker and cell (MAC) grid and the corresponding
discrete function spaces; see \cite{GHL+18, GHM+16, MN21} for more
details.  
\subsection{Mesh and Unknowns}
\label{subsec:msh_unkn}
A MAC grid is a pair $\mcal{T}=(\mcal{M},\mcal{E})$, where $\mcal{M}$
is called the primal mesh which is a partition of $\bar{\Omega}$
consisting of possibly non-uniform closed rectangles ($d=2$) or
parallelepipeds ($d=3$) and $\mcal{E}$ is the collection of all edges
of the primal mesh cells. For each $\sigma\in\mcal{E}$, we construct a
dual cell $\Ds$ which is the union of half-portions of the primal
cells $K$ and $L$, where $\s=\bar{K}\cap\bar{L}$. Furthermore, we
decompose $\E$ as $\mcal{E}=\cup_{i=1}^d\mcal{E}^{(i)}$, where
$\mcal{E}^{(i)}=\mcal{E}_\intr^{(i)}\cup\mcal{E}_\extr^{(i)}$. Here,
$\mcal{E}_\intr^{(i)}$ and $\mcal{E}_\extr^{(i)}$ are, respectively,
the collection of $(d-1)$-dimensional internal and external edges that
are orthogonal to the $i$-th unit vector $e^{(i)}$ of the canonical
basis of $\mbb{R}^d$. We denote by $\mcal{E}(K)$, the collection of
all edges of $K\in\mcal{M}$ and $\tilde{\mcal{E}}(D_\sigma)$, the
collection of all edges of the dual cell $D_\sigma$.

Now, we define a discrete function space $L_{\mcal{M}}(\Omega) \subset
L^{\infty}(\Omega)$, consisting of scalar valued functions which are
piecewise constant on each primal cell $K\in\mcal{M}$. Analogously, we
denote by $\uu{H}_{\mcal{E}}(\Omega)=\prod_{i=1}^{d}
H^{(i)}_{\mcal{E}}(\Omega)$, the set of vector valued (in $\mbb{R}^d$)
functions which are constant on each dual cell $D_\sigma$ for each
$\sigma\in\mcal{E}^{(i)}$, $i=1,2,\dots,d$. The space of vector
valued functions vanishing on the external edges is denoted as
$\uu{H}_{\mcal{E},0}(\Omega)=\prod_{i=1}^d
H^{(i)}_{\mcal{E},0}(\Omega)$, where  $H^{(i)}_{\mcal{E},0}(\Omega)$
contains those elements of $H^{(i)}_{\mcal{E}}(\Omega)$ which vanish
on the external edges. For a primal grid function $q\in
L_{\mcal{M}}(\Omega)$, such that $q =\sum_{K\in\mcal{M}}q_K\chark$,
and for each $\sigma = K|L \in\cup_{i=1}^d\mcal{E}^{(i)}_\intr$, the
dual average $q_{D_\sigma}$ of $q$ over $D_\sigma$ is defined via the
relation
\begin{equation}
  \label{eq:mass_dual}
  \abs{D_\sigma}q_{D_\sigma}=\abs{D_{\sigma,K}}q_K+\abs{D_{\sigma,L}}q_L.
\end{equation}
\begin{remark}
  All along the subsequent exposition we have assumed that the dual
  variables along with the gradients defined on the dual cells vanish on
  the boundary for the sake of simplicity of performing the
  analysis. However, we have implemented several other relevant
  boundary conditions while carrying out various numerical test cases
  in Section\,\ref{sec:num_tst}.

\end{remark}

\subsection{Discrete Convection Fluxes and Differential Operators}
\label{sec:dic_convect}

In this subsection we introduce the discrete convection fluxes and
discrete differential operators on the function spaces described
above. Let us assume that a discretisation of $\Omega$ is done using a
MAC grid $\mcal{T}=(\mcal{M},\mcal{E})$. Construction of the mass flux 
requires the following technical result which is stated below for
convenience and referred to \cite[Lemma 2]{GH+21} for proof.  
\begin{lemma}
  \label{lem:rho_sig}
  Let $\psi$ be a strictly convex and continuously differentiable
  function over an open interval $I$ of $\mathbb{R}$. If $\rho_K,
  \rho_L\in I$, then there exists a unique $\rho_{KL}\in\llbracket
  \rho_K,\rho_L\rrbracket$ such that
  \begin{equation}
    \psi(\rho_K) + \psi^\prime(\rho_K)(\rho_{KL}-\rho_L) = \psi(\rho_L)
    + \psi^\prime(\rho_L)(\rho_{KL}-\rho_L),\; \mbox{if}\;
    \rho_K\neq\rho_L,
  \end{equation}
  and $\rho_{KL}=\rho_K=\rho_L$, otherwise. In particular, for
  $\wp(\rho)=\rho^\gamma$, and for each $K,L\in\mcal{M}$, there exists
  a unique $\rho_{KL}\in \llbracket \rho_K,\rho_L\rrbracket$ such
  that 
  \begin{equation}
    \label{eq:rhosigchoice}
    \begin{aligned}
      \rho_K^\gamma - \rho_L^\gamma &=
      \rho_{KL}[h^\prime_\gamma(\rho_K)-h^\prime_\gamma(\rho_L)], \,
      \mbox{if } \rho_K\neq\rho_L, \\
      \rho_{KL}=\rho_K &=\rho_L, \, \mbox{otherwise}.
    \end{aligned}
  \end{equation}
\end{lemma}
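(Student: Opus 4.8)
The plan is to read the defining relation geometrically: the point $\rho_{KL}$ is the abscissa at which the tangent line to $\psi$ at $\rho_K$ meets the tangent line to $\psi$ at $\rho_L$. I would therefore introduce the affine function
\[
g(x) := \bigl[\psi(\rho_K)+\psi'(\rho_K)(x-\rho_K)\bigr]-\bigl[\psi(\rho_L)+\psi'(\rho_L)(x-\rho_L)\bigr],
\]
whose slope is $\psi'(\rho_K)-\psi'(\rho_L)$. Since $\psi$ is strictly convex, $\psi'$ is strictly increasing on $I$, so whenever $\rho_K\neq\rho_L$ this slope is nonzero; hence $g$ is a nonconstant affine function and possesses exactly one root, which I define to be $\rho_{KL}$. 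This single observation delivers both existence and uniqueness at once, and the degenerate case $\rho_K=\rho_L$ is disposed of by simply setting $\rho_{KL}=\rho_K=\rho_L$, for which the identity holds trivially.

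Next I would localise the root inside $\llbracket\rho_K,\rho_L\rrbracket$ by evaluating $g$ at the two endpoints and invoking the strict-convexity inequality $\psi(a)>\psi(b)+\psi'(b)(a-b)$, valid for $a\neq b$. Taking $a=\rho_K$, $b=\rho_L$ gives $g(\rho_K)>0$, while $a=\rho_L$, $b=\rho_K$ gives $g(\rho_L)<0$. As $g$ is continuous and changes sign between $\rho_K$ and $\rho_L$, the intermediate value theorem forces its unique root to lie strictly between them, so $\rho_{KL}\in\llbracket\rho_K,\rho_L\rrbracket$ as claimed.

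Finally, for the specialisation to $\wp(\rho)=\rho^\gamma$ I would apply the general statement with $\psi=h_\gamma$, which is strictly convex and continuously differentiable on $\mbb{R}^+$ because $h_\gamma''(\rho)=\gamma\rho^{\gamma-2}>0$ there. Solving the tangent-intersection equation for $\rho_{KL}$ and then using the thermodynamic relation $\rho h_\gamma'(\rho)-h_\gamma(\rho)=\wp(\rho)$ from \eqref{eq:psi_gm_prop} to eliminate the purely constant terms collapses the defining relation into the compact form $\rho_K^\gamma-\rho_L^\gamma=\rho_{KL}\bigl[h_\gamma'(\rho_K)-h_\gamma'(\rho_L)\bigr]$, which is exactly \eqref{eq:rhosigchoice}. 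I do not anticipate any genuine obstacle: the argument is entirely elementary, and the only point requiring a little care is the algebraic reduction in this last step, where the identity \eqref{eq:psi_gm_prop} is precisely what makes the $\psi(\rho_K)$ and $\psi(\rho_L)$ terms cancel cleanly.
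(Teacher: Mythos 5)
Your proof is correct, and it is worth noting that the paper itself does not prove this lemma at all: it is quoted verbatim from the reference cited as \cite[Lemma 2]{GH+21}, so your argument supplies what the paper omits. The tangent--intersection reading is exactly the right one, and your three steps are all sound: (i) the affine function $g$ has slope $\psi'(\rho_K)-\psi'(\rho_L)\neq 0$ by strict monotonicity of $\psi'$, giving a unique root; (ii) the strict-convexity inequality $\psi(a)>\psi(b)+\psi'(b)(a-b)$ gives $g(\rho_K)>0$ and $g(\rho_L)<0$, locating that root inside $\llbracket\rho_K,\rho_L\rrbracket$; (iii) for $\psi=h_\gamma$ the intersection equation rearranges to $\rho_{KL}\bigl[h_\gamma'(\rho_K)-h_\gamma'(\rho_L)\bigr]=\bigl[\rho_K h_\gamma'(\rho_K)-h_\gamma(\rho_K)\bigr]-\bigl[\rho_L h_\gamma'(\rho_L)-h_\gamma(\rho_L)\bigr]=\wp(\rho_K)-\wp(\rho_L)$, which is \eqref{eq:rhosigchoice}. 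One point you handled implicitly but should make explicit: the displayed equation in the lemma as printed has $(\rho_{KL}-\rho_L)$ on \emph{both} sides, which is a typo --- taken literally it forces $\rho_{KL}$ \emph{outside} $\llbracket\rho_K,\rho_L\rrbracket$ (e.g.\ for $\rho_K>\rho_L$ and $\psi=h_\gamma$ it would require $\rho_{KL}<\rho_L$) and is inconsistent with \eqref{eq:rhosigchoice}. The left-hand side must read $\psi'(\rho_K)(\rho_{KL}-\rho_K)$, which is precisely the tangent-line form your $g$ encodes; your proof is a proof of the corrected (intended) statement, and it is the only statement compatible with the rest of the paper.
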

\begin{remark}
  In the above lemma and throughout the rest of this paper, we denote
  by $\llbracket a,b\rrbracket$, the interval
  $[\min\{a,b\},\max\{a,b\}]$ for two real numbers $a$ and $b$.  
\end{remark}
\begin{definition}
  \label{def:disc_conv_flux}
  The mass and momentum convection fluxes are defined
  as follows.   
  \begin{itemize}
  \item For each $K \in \mcal{M}$, and $\sigma\in \mcal{E}(K)$, the
    mass convection flux $F_{\sigma,K} \colon L_{\mcal{M}}(\Omega)
    \times \uu{H}_{\mcal{E},0}(\Omega) \to \mbb{R}$ is defined by
    \begin{equation}
      \label{eq:mass_up}
      F_{\sigma,K}(\rho,\uu{u}):=\abs{\sigma}\rho_\sigma(u_{\sigma,
        K}-\delta u_{\sigma, K}), \ (\rho,\uu{u})\in
      L_{\mcal{M}}(\Omega) \times \uu{H}_{\mcal{E},0}(\Omega). 
    \end{equation}
    Here, $u_{\sigma , K}=u_{\sigma}
    \uu{e}^{(i)}\cdot\uu{\nu}_{\sigma, K}$ with $\uu{\nu}_{\sigma, K}$
    being the unit vector normal to the edge
    $\sigma=K|L\in\mcal{E}^{(i)}_{\mathrm{int}}$ in the direction
    outward to the cell $K$. The approximation of the density on each 
    interface $\sigma=K|L\in\mcal{E}^{(i)}_{\mathrm{int}}$,
    $i=1,2,\dots,d$, is obtained using Lemma\,\ref{lem:rho_sig} as
    $\rho_\sigma = \rho_{KL}$, where $\rho_{KL}$ is given by
    \eqref{eq:rhosigchoice}. The velocity stabilisation term
    $\delta\uu{u}\in\uu{H}_{\E,0}(\Omega)$ which dependents on
    $\rho$ and the discretised potential $\phi$ will be specified
    later.  
  \item For a fixed $i=1,2,\dots,d$, for each
    $\sigma\in\mcal{E}^{(i)}, \epsilon\in\bar{\E}_{D_\sigma}$, and
    $(\rho,\uu{u},v)\in \Lm\times \uu{H}_{\mcal{E},0}\times
    H^{(i)}_{\mcal{E},0} $, the upwind momentum convection flux is
    given by the expression 
    \begin{align}
      \label{eq:mom_flux_up} 
      \sum_{\epsilon\in\Eds}\fesig(\rho,\uu{u})v_{\epsilon, \mathrm{up}}.
    \end{align}
    Here, $\fesig(\rho,\uu{u})$ is the mass flux across the edge
    $\epsilon$ of the dual cell $\Ds$ which is taken to be $0$ if
    $\epsilon\in\mcal{E}_{\mathrm{ext}}$; otherwise, it is a
    suitable linear combination of the primal mass convection fluxes
    at the neighbouring edges with constant coefficients; see, e.g.\
    \cite{GHM+16} for more details.    
  \item For any dual face $\epsilon=\Ds|D_{\sigma^{\prime}}$ we have
    $F_{\epsilon,\sigma}(\rho,\uu{u})=-F_{\epsilon,\sigma^{\prime}}(\rho,\uu{u})$. Note
    that same is true for the primary mass fluxes, i.e.\ for any
    $\sigma=K|L$, we have
    $\fsigk(\rho,\uu{u})=-F_{\sigma^{\prime},K}(\rho,\uu{u})$. 
  \item In \eqref{eq:mom_flux_up}, $v_{\epsilon,\mathrm{up}}$ is
    determined by the following upwind choice:
    \begin{equation}
      \label{eq:mom_up}
      v_{\epsilon,\mathrm{up}}:=\begin{cases}
        v_{\sigma}, &\fesig(\rho,\uu{u})\geq 0,
        \\
        v_{\sigma^{\prime}}, &\mathrm{otherwise}.
    \end{cases}
  \end{equation}
\end{itemize}
\end{definition}
\begin{definition}[Discrete gradient, divergence and Laplacian operators]
  \label{def:disc_grad_div}
  The discrete gradient operator
  $\grd_{\mcal{E}}:L_{\mcal{M}}(\Omega)\rightarrow\uu{H}_{\mcal{E},0}(\Omega)$
  is defined by the map $q \mapsto
  \grd_{\mcal{E}}q=\Big(\D^{(1)}_{\mcal{E}}q,\D^{(2)}_{\mcal{E}}q,
  \dots,\D^{(d)}_{\mcal{E}}q\Big)$,
  where for each $i=1,2,\dots,d$, $\partial^{(i)}_{\mcal {E}}q$
  denotes
  \begin{equation}
    \label{eq:dis_grd}
    \partial^{(i)}_{\mcal {E}}q:=\sum_{\sigma\in
      \mcal{E}^{(i)}_\intr}(\partial^{(i)}_{\mcal{E}}q)_{\sigma}\mcal{X}_{D_{\sigma}},
    \ \mbox{with} \  (\partial^{(i)}_{\mcal{E}}q)_{\sigma}:=
    \frac{\abs{\sigma}}{\abs{D_\sigma}}(q_L-q_K)\uu{e}^{(i)}\cdot
    \uu{\nu}_{\sigma,K}, \; \sigma=K|L\in \mcal{E}^{(i)}_\intr.
  \end{equation}
  We set the gradient to zero on the external faces in all the
  subsequent analysis for the sake of ease. Note that the approximation
  of the interface values provided by Lemma \ref{lem:rho_sig} and the
  definition of discrete gradient \eqref{eq:dis_grd} further implies
  the following identity for any $q\in L_\M(\Omega)$ and $\gamma>1$: 
  \begin{equation}
    \label{eq:dis_pr_grd}
    (\D^{(i)}_\E q^\gamma)_\s = q_\s(\D^{(i)}_\E
    h^\prime_\gamma(q))_\s,\;
    \s\in\E^{(i)}_\mathrm{int},\;i=1,2,\dots,d.
  \end{equation}
  The discrete divergence operator
  $\dive_\M\colon L_\M(\Omega)\times\uu{H}_{\E,0}(\Omega)\rightarrow
  L_{\mcal{M}}(\Omega)$ is defined as $(q,\uu{v}) \mapsto \dive_\M
  (q\uu{v})=\sum_{K\in\M}(\dive_{\mcal{M}} (q\uu{v}))_K \mcal{X}_{K}$,
  where for each $K\in\M$, $(\dive_{\mcal{M}} \uu{v})_K $ denotes
  \begin{equation}
    \label{eq:dis_div_gen}
    (\dive_\M (q\uu{v}))_K
    :=\frac{1}{\abs{K}}\sum_{\sigma\in\mcal{E}(K)}\abs{\sigma} q_\s
    v_{\sigma,K},
  \end{equation}
  with $q_\s$ being defined as in Lemma \ref{lem:rho_sig}. In
  particular, for any $\uu{v}\in\uu{H}_{\E,0}(\Omega)$, we have
  $\dive_\M\uu{v}=\sum_{K\in\M}(\dive_{\mcal{M}}\uu{v})_K
  \mcal{X}_{K}\in L_\M(\Omega)$, where
  \begin{equation}
    \label{eq:dis_div}
    (\dive_\M \uu{v})_K
    :=\frac{1}{\abs{K}}\sum_{\sigma\in\mcal{E}(K)}\abs{\sigma}
    v_{\sigma,K}.
  \end{equation}
  Finally, the discrete Laplacian operator $\Delta_{\M}\colon L_{\M}(\Omega)\to
  L_{\M}(\Omega)$ is defined via the map $q\mapsto
  \Delta_{\M}q=\sum_{K\in\M}(\Delta_{\M} q)_{K}\mcal{X}_K\in L_{\M}(\Omega)$, where
  \begin{equation}
    \label{eq:dis_lap}
    (\Delta_{\M} q)_{K}:=(\dive_{\M}(\grd_{\E}q))_{K},\;\forall K\in\M.
  \end{equation}
\end{definition}

\begin{proposition}[Div-grad duality]
  For any $(p, q,\uu{v})\in L_\M(\Omega)\times
  L_\M(\Omega)\times\uu{H}_{\E,0}(\Omega)$, the following
  `gradient-divergence duality' holds for the discrete gradient and
  divergence operators: 
  \begin{equation}
    \label{eq:weighted_disc_dual}
    \sum_{K\in\M}\abs{K}p_K(\dive_\M(q\uu{v}))_K +
    \sum_{i=1}^d\sum_{\s\in\E^{(i)}_\mathrm{int}}\abs{D_\sigma}q_\s(\D^{(i)}_\E
    p)_\s v_\s=0.
  \end{equation}
  In particular for $q\equiv 1$, and any $(p,\uu{v})\in
  L_\M(\Omega)\times\uu{H}_{\E,0}(\Omega)$, we have
  \begin{equation}
    \label{eq:disc_dual}
    \sum_{K\in\M}\abs{K}p_K(\dive_\M\uu{v})_K +
    \sum_{i=1}^d\sum_{\s\in\E^{(i)}_\mathrm{int}}\abs{D_\sigma}(\D^{(i)}_\E p)_\s
    v_\s=0.
  \end{equation}
\end{proposition}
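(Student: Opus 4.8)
The plan is to establish \eqref{eq:weighted_disc_dual} as a discrete summation-by-parts identity, obtained by expanding the divergence term and regrouping the cell-indexed sum into an edge-indexed one.

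First I would insert the definition \eqref{eq:dis_div_gen} of $\dive_\M$, so that the factor $\abs{K}$ cancels the $1/\abs{K}$ weight and
\[
  \sum_{K\in\M}\abs{K}p_K(\dive_\M(q\uu{v}))_K
  =\sum_{K\in\M}p_K\sum_{\s\in\E(K)}\abs{\s}\,q_\s\,v_{\s,K}.
\]
Then I would interchange the order of summation, passing from ``each cell and its edges'' to ``each edge and its two adjacent cells''. Since $\uu{v}\in\Hez$ vanishes on the external edges, the terms with $\s\in\E_\extr$ drop out, and every interior edge $\s=K|L\in\E^{(i)}_\intr$ is visited exactly twice, once from $K$ and once from $L$.

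The decisive algebraic step uses that $q_\s$ and $v_\s$ are single-valued on the edge $\s$, while the outward normals obey $\uu{\nu}_{\s,L}=-\uu{\nu}_{\s,K}$. Writing $v_{\s,K}=v_\s\,\uu{e}^{(i)}\cdot\uu{\nu}_{\s,K}$, the two contributions of $\s$ collapse to a single difference:
\[
  p_K\abs{\s}q_\s v_\s(\uu{e}^{(i)}\!\cdot\uu{\nu}_{\s,K})
  +p_L\abs{\s}q_\s v_\s(\uu{e}^{(i)}\!\cdot\uu{\nu}_{\s,L})
  =\abs{\s}q_\s v_\s(\uu{e}^{(i)}\!\cdot\uu{\nu}_{\s,K})(p_K-p_L).
\]
Comparing with the definition \eqref{eq:dis_grd}, which gives $\abs{\Ds}(\D^{(i)}_\E p)_\s=\abs{\s}(p_L-p_K)(\uu{e}^{(i)}\cdot\uu{\nu}_{\s,K})$, each edge contribution equals $-\abs{\Ds}\,q_\s(\D^{(i)}_\E p)_\s\,v_\s$. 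Summing over $\s\in\E^{(i)}_\intr$ and $i=1,\dots,d$ and transposing the edge sum to the left-hand side produces \eqref{eq:weighted_disc_dual}. Setting $q\equiv 1$, so that $q_\s=1$ by Lemma \ref{lem:rho_sig}, immediately gives \eqref{eq:disc_dual}.

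I do not anticipate any analytic difficulty here; the identity is exact and purely discrete. The only point requiring care is the bookkeeping of the sign convention $\uu{\nu}_{\s,K}=-\uu{\nu}_{\s,L}$ for the shared normals, together with the verification that the external-edge contributions vanish because $\uu{v}\in\Hez$; it is precisely this cancellation that converts the doubled interior-edge sum into the discrete gradient difference $p_K-p_L$.
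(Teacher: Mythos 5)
Your proof is correct and complete: expanding the divergence via \eqref{eq:dis_div_gen}, reindexing the cell--edge sum over interior edges (with the external-edge terms vanishing since $\uu{v}\in\Hez$), collapsing the two visits to each edge $\s=K|L$ using $\uu{\nu}_{\s,L}=-\uu{\nu}_{\s,K}$, and identifying the resulting difference $p_K-p_L$ with $-\abs{\Ds}(\D^{(i)}_\E p)_\s/(\abs{\s}\,\uu{e}^{(i)}\cdot\uu{\nu}_{\s,K})$ is exactly the standard summation-by-parts derivation, and your remark that Lemma~\ref{lem:rho_sig} gives $q_\s=1$ when $q\equiv 1$ correctly settles \eqref{eq:disc_dual}. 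Note that the paper itself states this proposition without proof, deferring to the MAC-grid literature it cites, so your argument supplies precisely the omitted (and expected) justification; the only point worth making explicit in a write-up is that the single-valuedness of $q_\s$ and $v_\s$ on each face, and the well-definedness of $(\D^{(i)}_\E p)_\s$ under the swap $K\leftrightarrow L$, are what make the pairing unambiguous.
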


\section{The Finite Volume Scheme}
\label{sec:FVApp}

\subsection{A Semi-Implicit Scheme}
\label{subsec:semi_impl_schm}

Based on the finite volume discretisation and the corresponding
discrete operators defined in the previous section we now introduce a
semi-implicit scheme for the system
\eqref{eq:cont_mas_stab}-\eqref{eq:cont_mom_stab}.  

For the given potential $\phi$, the corresponding discrete
hydrostatic density $\tilde{\rho}$ is obtained as the cell average on
each control volume in the collection of
primal cells $\M$. Accordingly, by a slight abuse of notation,
$\tilde{\rho}=\sum_{K\in\M}
\tilde{\rho}_K\mcal{X}_K\in L_\M(\Omega)$ denotes the discrete
hydrostatic density, where
\begin{equation}
\label{eq:hyd_dens_dis}
    \tilde{\rho}_K :=
    \frac{1}{\abs{K}}\int_K\tilde{\rho}(\uu{x})\dd\uu{x}
    =\frac{1}{\abs{K}}\int_K\left(\frac{\gamma - 1}{\gamma} (C(m_0)-
      \phi(\uu{x}))\right)^{\frac{1}{\gamma -1}}\dd\uu{x},\; \forall K\in\M,
  \end{equation}
following the relations \eqref{eq:int_pot} and
\eqref{eq:psi_gamma_a}. Similarly,
$\phi=\sum_{K\in\M}\phi_K\mcal{X}_K\in L_\M(\Omega)$ denotes the
discrete approximation of the smooth potential $\phi$ given by
\begin{equation}
\label{eq:phi_dis}
    \phi_K := C(m_0)-h^\prime_\gamma(\tilde{\rho}_K),\; \forall K\in\M,
\end{equation}
following \eqref{eq:int_pot}.

Let $\delta t> 0$ be a constant time-step. An approximate solution
$(\rho^{n},\uu{u}^n)\in L_{\mcal{M}}(\Omega)\times
\uu{H}_{\mcal{E},0}(\Omega)$ to the stabilised Euler system
\eqref{eq:cont_mas_stab}-\eqref{eq:cont_mom_stab} at time $t_n =
n\dt$, for $1\leq n\leq N = \lfloor\frac{T}{\delta t}\rfloor$, is computed
inductively by the following semi-implicit scheme: 
\begin{align}
\label{eq:disc_mas_updt}
  &\frac{1}{\delta
    t}(\rho^{n+1}_{K}-\rho^{n}_{K})+\frac{1}{\abs{K}}
    \sum_{\sigma\in{\mcal{E}(K)}}F_{\sigma,K}(\rho^{n+1},\uu{u}^n)
    =0, \ \forall K  \in\M, \\
  \label{eq:disc_mom_updt}
  &\frac{1}{\delta
    t}\left(\rho^{n+1}_{D_\sigma}u^{n+1}_{\sigma}-\rho^n_{D_\sigma}
    u^{n}_{\sigma}\right)+\frac{1}{\abs{D_\sigma}}\sum_{\epsilon\in
    {\bar{\mcal{E}}(D_{\sigma})}}F_{\epsilon,\sigma}(\rho^{n+1}
    ,\uu{u}^n)u^{n}_{\epsilon,\mathrm{up}}  \\
  &\hspace*{3cm}+\frac{1}{\veps^2}(\D^{(i)}_{\mcal{E}}p^{n+1})_{\sigma}=
    -\frac{1}{\veps^2}\rho^{n+1}_\sigma (\partial^{(i)}_{\mcal{E}}\phi)_{\sigma},
    \ \forall\sigma \in\E^{(i)}_\mathrm{int}, \ i=1,2,\dots,d.\nonumber
\end{align}
We now define the implicit momentum stabilisation term present in the
mass flux, cf.\ Section\,\ref{sec:MAC_disc_diff}, in accordance with the
energy stability analysis as a discrete analogue of
\eqref{eq:delta_u_eps}, i.e.\
\begin{equation}
  \label{eq:imp_vel_stab}
  \delta u^{n+1}_\sigma := \frac{\eta\delta
    t}{\veps^2}\Big[(\partial^{(i)}_{\E}p^{n+1})_\sigma +
  \rho^{n+1}_\sigma(\partial^{(i)}_{\E}\phi)_\sigma\Big].
\end{equation}
The implicit density term appearing on the right hand side of
\eqref{eq:disc_mom_updt} for each
$\sigma\in\mcal{E}^{(i)}_\mathrm{int}$, $\sigma = K|L$, is also
determined using Lemma\,\ref{lem:rho_sig}.
\begin{remark}
  \label{rem:interface_den}
  Note that the implicit density term on the interfaces appearing in
  the source term is consistent with the mass and momentum fluxes. As
  we shall see in the subsequent analyses, this feature enables us to
  obtain the discrete counterpart of the apriori energy estimates in
  Proposition\,\ref{prop:engy_balance}, which yields the scheme's
  stability and finally leads to a consistent discretisation of the
  anelastic Euler system in the asymptotic limit. In addition, the
  same interface approximation of the density ensures that the scheme
  exactly satisfies a discrete analogue of the hydrostatic steady state 
\eqref{eq:hydrostat}. 
\end{remark}
\begin{remark}
  \label{rem:order}
  The proposed velocity stabilised scheme
  \eqref{eq:disc_mas_updt}-\eqref{eq:disc_mom_updt} is closely related
  to the so-called non-incremental projection schemes
  \cite{Cho68}. Consequently, the scheme remains first-order
  accurate. Nonetheless, we validate by a numerical experiment in
  Subsection\,\ref{subsec:stat_vort} that the order of accuracy is
  uniform in $\veps$. 
\end{remark}

Averaging the mass balance \eqref{eq:disc_mas_updt} over dual cells,
we obtain the following mass balance on each dual cell $D_\s$ using
the relation between the primal and dual convection fluxes as
described in Definition \ref{def:disc_conv_flux}:  
\begin{equation} 
    \label{eq:mas_dual}
    \frac{1}{\delta
      t}(\rho^{n+1}_{D_\sigma}-\rho^{n}_{D_\sigma})
    +\frac{1}{\abs{D_\sigma}}\sum_{\epsilon\in\bar{\mcal{E}}(D_\sigma)}
    F_{\epsilon,\sigma}(\rho^{n+1},\uu{u}^n)=0.
  \end{equation}
The discrete momentum update \eqref{eq:disc_mom_updt} and the dual
mass balance \eqref{eq:mas_dual} together yields the following update
of the velocity components:
\begin{equation}
  \label{eq:dis_vel_dual}
  \frac{u_\s^{n+1}-u_\s^n}{\dt}-\frac{1}{\abs{\Ds}}
  \sum_{\epsilon\in\tilde\E(\Ds)}F_{\epsilon,\s}(\rho^{n+1},\uu{u}^{n})^-
  \frac{u_{\s^{\prime}}^{n}-u_\s^n}{\rho_{\Ds}^{n+1}}
  +\frac{1}{\veps^2}\frac{1}{\rho_{\Ds}^{n+1}}\left((\partial_{\E}^{(i)}p^{n+1})_\sigma
  +\rho^{n+1}_\sigma(\partial_{\E}^{(i)}\phi)_\sigma\right)=0.
\end{equation}
Here, and throughout rest of the paper, 
$x^\pm=\half(\abs{x} \pm x)$ denotes the positive and negative parts of a real
number $x$ so that $x=x^+-x^-$ with $x^\pm\geq 0$. 
\subsection{Initialisation of the scheme} 
The initial approximations at $t_0=0$ is given by the cell averages of
the initial data, i.e.\  
\begin{align}
  (\rho^\veps)^{0}_{K} &=\frac{1}{\abs{K}}\int_{K}\rho_{0}^{\veps}(\uu{x})\dd\uu{x},
                 \ \forall K\in\M, \label{eq:disc_mas_id} \\
  (u^\veps)^{0}_\sigma &=
                 \frac{1}{\abs{D_\sigma}}\int_{D_\sigma}u_{0}^{\veps,(i)}(\uu{x})\dd\uu{x},
                         \ 
                 \forall \sigma\in \mcal{E}^{(i)}_{\mathrm{int}}. \label{eq:disc_vel_id} 
\end{align}
The following estimates are direct consequences of the well-prepared 
initial datum and its discretisation. 
\begin{lemma}
  \label{lem:wp_est}
  Let the initial data $(\rho^\veps_0, \uu{u}^\veps_0)$ be
  well-prepared in the sense of Definition\,\ref{def:wp_id}. Then, 
  \begin{enumerate}[label=(\roman*)]
  \item $(\rho^\veps)^0\in L_{\M}(\Omega)$
    satisfies the estimate
    \begin{equation}
      \label{eq:dis_wp_est}
      \frac{1}{\veps}\max_{K\in\mcal{M}}\abs{\rho^0_K -
        \tilde{\rho}_K}\leq M(\veps),
    \end{equation}
    where $M(\veps)\to 0$ as $\veps\to 0$;
  \item $\{(\uu{u}^\veps)^0\}_{\veps>0}\subset\uu{H}_{\E,0}(\Omega)$ is
    uniformly bounded.
  \end{enumerate}
\end{lemma}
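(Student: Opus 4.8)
The plan is to reduce both claims to elementary averaging estimates, exploiting the affine structure of the well-prepared datum in Definition~\ref{def:wp_id}(ii) and the fact that the mesh is held fixed while $\veps\to 0$.

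For the density estimate (i), I would begin by subtracting the two cell averages. Since the discrete hydrostatic density in \eqref{eq:hyd_dens_dis} is defined as the exact cell average of the continuous profile $\tilde\rho(\uu{x})$, the difference $\rho^0_K-\tilde\rho_K$ is the cell average of $\rho^\veps_0-\tilde\rho$ over $K$; inserting the well-prepared splitting $\rho^\veps_0=\tilde\rho+\veps r^\veps_0$ factors out precisely one power of $\veps$, giving
\[
\rho^0_K-\tilde\rho_K=\frac{\veps}{\abs{K}}\int_K r^\veps_0(\uu{x})\,\dd\uu{x}.
\]
Dividing by $\veps$ and applying the Cauchy--Schwarz inequality on $K$ bounds the right-hand side by $\abs{K}^{-1/2}\norm{r^\veps_0}_{L^2(K)}\le \abs{K}^{-1/2}\norm{r^\veps_0}_{L^2(\Omega)}$. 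Taking the maximum over the finitely many cells of the fixed mesh and setting $M(\veps):=\big(\min_{K\in\M}\abs{K}\big)^{-1/2}\norm{r^\veps_0}_{L^2(\Omega)}$ then yields \eqref{eq:dis_wp_est}. Because the mesh is frozen, the prefactor is a finite constant, so $M(\veps)\to 0$ follows at once from the hypothesis $r^\veps_0\to 0$ in $L^2(\Omega)$.

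For the velocity bound (ii), I would measure boundedness in the discrete $L^2$ norm $\norm{(\uu{u}^\veps)^0}^2=\sum_{i=1}^d\sum_{\sigma\in\E^{(i)}_\intr}\abs{\Ds}\,\abs{(u^\veps)^0_\sigma}^2$. Applying Jensen's inequality to the dual-cell average \eqref{eq:disc_vel_id} gives the edge-by-edge bound $\abs{\Ds}\,\abs{(u^\veps)^0_\sigma}^2\le\int_{\Ds}\abs{u_0^{\veps,(i)}}^2\,\dd\uu{x}$. Summing over $\sigma$ and $i$ and using that, for each fixed direction $i$, the dual cells indexed by $\E^{(i)}$ tile $\Omega$ without overlap, the partial sum over internal edges is controlled by $\int_\Omega\abs{u_0^{\veps,(i)}}^2\,\dd\uu{x}$, whence $\norm{(\uu{u}^\veps)^0}^2\le\norm{\uu{u}^\veps_0}_{L^2(\Omega)^d}^2$. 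The right-hand side is bounded uniformly in $\veps$ since $\uu{u}^\veps_0$ converges in $L^2(\Omega)^d$ by Definition~\ref{def:wp_id}(iii) and convergent sequences are bounded; as the mesh is fixed, the equivalence of norms on the finite-dimensional space $\uu{H}_{\E,0}(\Omega)$ promotes this to uniform boundedness in every norm.

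Neither part poses a genuine difficulty, the argument being purely an averaging estimate. The only point demanding care is the bookkeeping in (ii): one must invoke that the dual meshes are direction-wise non-overlapping covers of $\Omega$ so that the summed local $L^2$ contributions are not double-counted, and note that excluding the external edges (where the velocity vanishes) only sharpens the inequality.
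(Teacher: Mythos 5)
Your proof is correct and is precisely the elementary argument the paper has in mind: the paper omits a proof entirely, asserting the lemma is a ``direct consequence of the well-prepared initial datum and its discretisation,'' and your cell-averaging computation with Cauchy--Schwarz for (i) and Jensen plus the direction-wise tiling of $\Omega$ by dual cells for (ii) is exactly that verification, with the fixed-mesh constant $\big(\min_{K\in\M}\abs{K}\big)^{-1/2}$ and the boundedness of the convergent sequence $\uu{u}^\veps_0$ supplying the two hypotheses of Definition~\ref{def:wp_id} where they are needed.
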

\begin{remark}
  For the sake of brevity, we shall omit $\veps$ and denote the
  discretised initial data as $(\rho^0,\uu{u}^0)\in
  L_{\M}(\Omega)\times\uu{H}_{\E,0}(\Omega) $ in the subsequent
  analysis carried out in this section.
\end{remark}

\subsection{Existence of a Numerical Solution}
Having defined the finite volume scheme
\eqref{eq:disc_mas_updt}-\eqref{eq:disc_mom_updt}, we now proceed to 
establish its key features. Our first result is regarding the
existence and positivity of the density given by the semi-implicit
mass update \eqref{eq:disc_mas_updt} for a given data $(\rho^n,
\uu{u}^n)\in L_\M(\Omega)\times\uu{H}_{\E,0}(\Omega)$ with $\rho^n>0$
on $\Omega$. Since \eqref{eq:disc_mas_updt} is leads to a nonlinear
equation for $\rho^{n+1}$, we make use of a few results from
topological degree theory in finite dimensions \cite{Dei85} to
establish the existence of a positive solution.  
\begin{theorem}
  \label{thm:existence}
  Let $(\rho^n,\uu{u}^n)\in
  L_{\mcal{M}}(\Omega)\times\uu{H}_{\mcal{E},0}(\Omega)$ be such
  that $\rho^{n}>0$ on $\Omega$. Then there exists a time-step
  $\dt^{n}_{\mcal{T},\Omega}>0$
  depending only upon $\rho^n$, the space discretisation $\mcal{T}$,
  and the domain $\Omega$, such that
  for any $0<\dt\leq\dt^n_{\mcal{T},\Omega}$, the updates 
  \eqref{eq:disc_mas_updt}-\eqref{eq:disc_mom_updt} admits a solution
  $(\rho^{n+1},\uu{u}^{n+1})\in L_{\mcal{M}}(\Omega)\times
  \uu{H}_{\E,0}(\Omega)$ such that $\rho^{n+1}>0$. 
\end{theorem}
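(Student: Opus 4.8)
The plan is to decouple the two updates and to solve the scalar mass balance first. After inserting the velocity shift \eqref{eq:imp_vel_stab}, the mass flux $\fsigk(\rho^{n+1},\uu{u}^n)$ depends on $\rho^{n+1}$ alone ($\uu{u}^n$ and the discrete potential $\phi$ being frozen), so \eqref{eq:disc_mas_updt} is a closed nonlinear system for $\rho^{n+1}\in\Lm\cong\mbb{R}^{\#\M}$. I would encode it as a zero-finding problem $\mcal{G}(\rho^{n+1})=0$; because the shift carries $(\D^{(i)}_\E(\rho^{n+1})^\gm)_\sigma$ — equivalently a factor $\rho^{n+1}_\sigma$ by \eqref{eq:dis_pr_grd} — the residual is superlinear in $\rho^{n+1}$ and a contraction argument is unnatural, so I would instead invoke topological degree in finite dimensions \cite{Dei85}. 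Once a positive $\rho^{n+1}$ is in hand, the velocity is recovered explicitly: since $\rho^{n+1}_{\Ds}>0$ by \eqref{eq:mass_dual} (a convex combination of positive primal values), the dual momentum balance \eqref{eq:disc_mom_updt}, rewritten as \eqref{eq:dis_vel_dual}, determines each $u^{n+1}_\sigma$ by a single division by $\rho^{n+1}_{\Ds}$, giving $\uu{u}^{n+1}\in\Hez$.

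The core of the argument is a homotopy together with uniform a priori bounds. I would set
\[
  H(\rho,\lambda)_K := \frac{1}{\dt}\bigl(\rho_K-\rho^n_K\bigr)
  + \frac{\lambda}{\volk}\sum_{\sigma\in\mcal{E}(K)}\fsigk(\rho,\uu{u}^n),
  \quad \lambda\in[0,1],\ K\in\M,
\]
connecting the actual scheme ($\lambda=1$) to the trivial affine map $\frac{1}{\dt}(\rho-\rho^n)$ ($\lambda=0$), and fix the bounded open set $\mcal{O}:=\{\rho\in\Lm : \tfrac12 m^\star<\rho_K<2M^\star\ \forall K\in\M\}$, where $m^\star=\min_K\rho^n_K$ and $M^\star=\max_K\rho^n_K$. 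The decisive step is to show that for $\dt$ small no zero of $H(\cdot,\lambda)$ touches $\partial\mcal{O}$, uniformly in $\lambda$. Evaluating $H(\rho,\lambda)=0$ at a cell $K_0$ where $\rho$ is minimal, and using that $\rho_\sigma\in\llbracket\rho_{K_0},\rho_L\rrbracket$ keeps every density in $[\tfrac12 m^\star,2M^\star]$ on $\bar{\mcal{O}}$, I would bound the frozen convective velocity by $\|\uu{u}^n\|_\infty$ and the shift \eqref{eq:imp_vel_stab} by $O(\dt)$, obtaining $\abs{\rho_{K_0}-\rho^n_{K_0}}\le C\dt$ with $C=C(\rho^n,\uu{u}^n,\mcal{T},\Omega,\veps,\gm,\eta,\phi)$ finite. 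Hence the minimum exceeds $m^\star-C\dt$, and the twin estimate at a maximal cell yields the upper bound $M^\star+C\dt$. Choosing $\dt^n_{\mcal{T},\Omega}:=m^\star/(2C)$ then forces every zero to satisfy the strict inequalities defining $\mcal{O}$, so that $0\notin H(\partial\mcal{O},\lambda)$ for all $\lambda\in[0,1]$.

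With the boundary condition secured, homotopy invariance of the degree gives $\deg(H(\cdot,1),\mcal{O},0)=\deg(H(\cdot,0),\mcal{O},0)$. The map $H(\cdot,0)=\frac{1}{\dt}(\Id-\rho^n)$ is affine with invertible linear part and its unique zero $\rho^n$ lies in $\mcal{O}$ (since $m^\star\le\rho^n_K\le M^\star$), so this degree equals $+1\neq0$. Consequently $\mcal{G}=H(\cdot,1)$ has a zero $\rho^{n+1}\in\mcal{O}$, which is in particular strictly positive; combined with the explicit velocity recovery described above, this produces the desired $(\rho^{n+1},\uu{u}^{n+1})\in\Lm\times\Hez$ with $\rho^{n+1}>0$.

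I expect the uniform a priori bound — keeping the zeros off $\partial\mcal{O}$, and above all away from $\{\rho_K=0\}$ — to be the main obstacle. The genuine difficulty is the superlinearity of the stabilised flux, which precludes a global Lipschitz or fixed-point estimate; the remedy is to carry out all estimates on the fixed compact $\bar{\mcal{O}}$, where the densities, and hence the constant $C$, are controlled independently of the unknown solution, and then to absorb the resulting $O(\dt)$ perturbation through the time-step threshold. I emphasise that positivity is never established globally but only for the zeros lying inside $\bar{\mcal{O}}$; this is precisely the information the degree argument consumes, and it is also what renders the discrete energy identities analogous to Proposition~\ref{prop:engy_balance}, which presuppose $\rho>0$, admissible in the subsequent analysis.
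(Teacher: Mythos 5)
Your proposal is correct and follows essentially the same route as the paper's proof: the same decoupling (solve the nonlinear mass update for $\rho^{n+1}$ first, then recover $\uu{u}^{n+1}$ explicitly by dividing by $\rho^{n+1}_{\Ds}>0$), the same homotopy that scales the convective flux by $\lambda$, and the same degree-theoretic argument with time-step-dependent a priori bounds keeping zeros off the boundary of a box in $L_{\M}(\Omega)$ --- the paper merely takes the box $\{0<\rho_K<C\}$ and obtains its upper bound from conservativity of the fluxes (summing the scheme over all cells), whereas you use the local $O(\dt)$ perturbation estimate for both bounds. One small repair: with $\dt^{n}_{\mcal{T},\Omega}=m^\star/(2C)$ and $\dt\leq\dt^{n}_{\mcal{T},\Omega}$ your estimate only gives $\rho_K\geq\tfrac12 m^\star$, not the \emph{strict} inequality needed to contradict $\rho\in\partial\mcal{O}$, so the threshold should be taken with a strict margin (e.g.\ $m^\star/(3C)$), exactly as the paper's condition \eqref{eq:suff_tstep} enforces a strict inequality.
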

\begin{proof}
  Let $C^{n}_{\M,\Omega}$ be a positive constant defined as
  \begin{equation}
    \label{eq:n_upbd}
    C^{n}_{\M,\Omega} :=
    \abs{\Omega}\frac{\max_{K\in\M}\{\rho^n_K\}}{\min_{K\in\M}\{\abs{K}\}}. 
  \end{equation}
  We further define $\Lambda^n_{\M,\Omega}(\tilde{\rho})>0$ as
  \begin{equation}
    \label{eq:stab_upbd}
    \Lambda^n_{\M,\Omega}(\tilde{\rho}) := (C^{n}_{\M,\Omega})^{\gamma} +
    C^{n}_{\M,\Omega}\max_{K\in\M}\abs{h_\gamma^\prime(\tilde{\rho}_K)}. 
  \end{equation}
  Now, we choose a time-step $\dt^{n}_{\mcal{T},\Omega}>0$ so that
  \begin{equation}
\label{eq:suff_tstep}
\dt^{n}_{\mcal{T},\Omega}\max_{K\in\M}\{\partial K\}\Big(
C_{\M}\max_{\s\in\E_{\mathrm{int}}}\abs{u^{n}_\s} +
\frac{1}{\veps}\sqrt{2\eta
  C_{\mcal{T}}\Lambda^n_{\M,\Omega}(\tilde{\rho})}\Big)<
\min\{1,\zeta^{n}\}, 
\end{equation}
where $C_{\M} = \max_{K\in\M}\{\frac{1}{\abs{K}}\}$, $C_\mcal{T} =
C_{\M}\max_{\s\in\E_{\mathrm{int}}}\{\frac{1}{\abs{D_\s}}\}$ and
$\zeta^n = \frac{\min_{K\in\M}\{\rho^n_K\}}{3C^{n}_{\M,\Omega}}$. The
constant $\eta>0$ is taken as $\eta =
\eta_1\max_{\s\in\E_{\mathrm{int}}}\{\frac{1}{\rho^n_{D_\sigma}}\}$
with $\eta_1>3/2$. Let $C>C^{n}_{\M,\Omega}$ and let us define an open
bounded set $V\subset L_{\M}(\Omega)$ via  
\begin{equation}
\label{eq:setdef_exdom}
V :=\{\rho\in L_{\M}(\Omega)\colon 0<\rho_K<C,\; \forall K\in\M\}.
\end{equation}
For any $0<\dt\leq \dt^{n}_{\mcal{T}, \Omega}$, let us introduce a
function $H\colon [0,1]\times L_{\M}(\Omega)\rightarrow
L_{\M}(\Omega)$ such that 
\begin{align*}
  (\lambda, \rho) &\mapsto\sum_{K\in\M}H(\lambda,\rho)_K\mcal{X}_K,\;
                    \forall (\lambda, \rho)\in[0,1]\times L_\M(\Omega), 
  \\
  H(\lambda,\rho)_K &:= (\rho_K - \rho^n_K) +
                      \frac{\lambda\dt}{\abs{K}}\sum_{\s\in\E(K)}F_{\s,
                      K}(\rho, \uu{u}^n),\; \forall K\in\M. 
\end{align*}
Note that  $H(\lambda, \cdot)$ is a homotopy connecting $H(0, \cdot)$
and $H(1, \cdot)$. We first show that $H(\lambda, \cdot)\neq 0$ on
$\partial V$, for any $\lambda\in[0,1]$, which in turn would imply
that $H(\lambda, \cdot)$ has a non-zero topological degree with
respect to $V$. On the contrary, let us assume that $H(\lambda,
\rho^\lambda)=0$ for some $\lambda\in[0,1]$ and
$\rho^\lambda\in\partial V$. So we have that
\begin{equation}
\label{eq:lambda_soln}
(\rho^\lambda_K - \rho^n_K) +
\frac{\lambda\dt}{\abs{K}}\sum_{\s\in\E(K)}F_{\s, K}(\rho^\lambda,
\uu{u}^n) = 0, \; \forall K\in\M.
\end{equation}
Since $\rho^\lambda\in \partial V\subset \bar{V}$, taking sum over all
$K\in\M$ in $\eqref{eq:lambda_soln}$, we have, 
\begin{equation}
\label{eq:lambda_soln_ub}
0\leq \rho^\lambda_K\leq C^{n}_{\M,\Omega}<C,\; \forall K\in\M.
\end{equation}
Now, for a time-step $0<\dt\leq\dt^{n}_{\mcal{T}, \Omega}$, we further have from \eqref{eq:suff_tstep} and  \eqref{eq:lambda_soln_ub} that for each $K\in\M$
\begin{equation}
\label{eq:rho_n_strict}
\frac{1}{3}\rho^n_K >
\frac{\lambda\dt}{\abs{K}}\sum_{\sigma\in\Ek}\abs{F_{\s,K}(\rho^\lambda,u^n)},
\end{equation}
from which, taking \eqref{eq:lambda_soln} into account we have,
\begin{equation}
\label{eq:rho_lambda_strict}
\frac{1}{2}\rho^{\lambda}_K -
\frac{\lambda\dt}{\abs{K}}\sum_{\sigma\in\Ek}\abs{F_{\s,K}(\rho^\lambda,
  u^n)} > 0,\; \forall K\in\M. 
\end{equation}
From \eqref{eq:lambda_soln_ub} and \eqref{eq:rho_lambda_strict} we have
\begin{equation}
0<\rho^\lambda_K<C,\; \forall K\in\M,
\end{equation}
which contradicts the assumption that $\rho^\lambda\in \partial
V$. Hence, we have that for any $\lambda\in[0,1]$,
$H(\lambda,\cdot)\neq 0$ on
$\D V$ which implies that $\deg H(1,\cdot) = \deg H(0,\cdot)$ on
$V$. Since $\deg H(0,\cdot)\neq 0$, we conclude that $H(1,\cdot)$ has
a zero in $V$. In other words, there exists a solution $\rho^{n+1}\in
_\M(\Omega)$ to the mass update \eqref{eq:disc_mas_updt} satisfying
$\rho^{n+1}>0$. Subsequently, the updated velocity can be
explicitly determined from $\eqref{eq:disc_mom_updt}$.
\end{proof}

\section{Energy Stability and Well-Balancing}
\label{sec:enstab_wb}

In this section we establish the nonlinear energy stability of the
semi-implicit scheme developed in
Section\,\ref{subsec:semi_impl_schm}. To this end, we first prove the
discrete counter parts of the energy balances stated in
Proportion\,\ref{prop:cont_en_id}. 

\subsection{Internal Energy Balance}
\label{subsec:disc_int_pot}
The following lemma gives the discrete internal energy balance for the
scheme \eqref{eq:disc_mas_updt}-\eqref{eq:disc_mom_updt}. 
\label{lem:disc_int} 
\begin{lemma}[Renormalisation identify]
  Let $(\rho^{n+1},\uu{u}^{n+1})\in L_{\mcal{M}}(\Omega)\times
  \uu{H}_{\mcal{E},0}(\Omega)$ be a solution to the semi-implicit scheme
  (\ref{eq:disc_mas_updt})-(\ref{eq:disc_mom_updt}). Then for all
  $K\in\M$, the following holds:
  \begin{equation}
    \label{eq:disc_int}
    \frac{\abs{K}}{\delta t}\big
    (h_\gamma(\rho^{n+1}_K)-h_\gamma(\rho^{n}_K)\big)+
    h^\prime_\gamma(\rho^{n+1}_K)\sum_{\sigma\in\mcal{E}(K)}F_{\sigma,K}(\rho^{n+1},
    \uu{u}^n) + \mcal{R}_{K,\delta t}^{n+1,n}=0, 
  \end{equation}
  where $\mcal{R}_{K,\delta t}^{n+1,n}\geq 0$.
\end{lemma}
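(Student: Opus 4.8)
The plan is to derive the discrete renormalisation identity \eqref{eq:disc_int} directly from the cell mass balance \eqref{eq:disc_mas_updt}, mirroring the continuous computation behind \eqref{eq:renorm} while carefully accounting for the consistency error introduced by the implicit-in-time treatment of the density. First I would multiply \eqref{eq:disc_mas_updt} by $\volk\,h^\prime_\gamma(\rho^{n+1}_K)$, using the \emph{implicit} value $\rho^{n+1}_K$ as the argument of $h^\prime_\gamma$ since the flux divergence in the scheme is evaluated at time level $n+1$. This reproduces the flux term $h^\prime_\gamma(\rho^{n+1}_K)\sum_{\sigma\in\Ek}\fsigk(\rho^{n+1},\uu{u}^n)$ exactly as it appears in \eqref{eq:disc_int}, together with a discrete time-derivative contribution $\tfrac{\volk}{\dt}\,h^\prime_\gamma(\rho^{n+1}_K)(\rho^{n+1}_K-\rho^n_K)$.

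The crux is to rewrite this last term as the clean difference $\tfrac{\volk}{\dt}\bigl(h_\gamma(\rho^{n+1}_K)-h_\gamma(\rho^n_K)\bigr)$ plus a non-negative remainder. To this end I would invoke the convexity of $h_\gamma$ recorded after \eqref{eq:psi_gamma_a} through the relative internal energy $\Pi_\gamma$ defined in \eqref{eq:cont_porenorm}, which yields the purely algebraic identity
\begin{equation*}
  h^\prime_\gamma(\rho^{n+1}_K)\bigl(\rho^{n+1}_K-\rho^n_K\bigr)
  = h_\gamma(\rho^{n+1}_K)-h_\gamma(\rho^n_K)
  + \Pi_\gamma(\rho^n_K\vert\rho^{n+1}_K).
\end{equation*}
Substituting this back identifies the remainder explicitly as $\mcal{R}^{n+1,n}_{K,\dt}=\tfrac{\volk}{\dt}\,\Pi_\gamma(\rho^n_K\vert\rho^{n+1}_K)$, which is precisely the numerical artefact of the implicit time stepping.

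Finally, the non-negativity claim follows at once: since $h_\gamma(\rho)=\rho^\gamma/(\gamma-1)$ with $\gamma>1$ is strictly convex on $\mbb{R}^+$, the quantity $\Pi_\gamma(a\vert b)$ is exactly the gap between $h_\gamma$ and its tangent line at $b$, so $\Pi_\gamma(\rho^n_K\vert\rho^{n+1}_K)\ge 0$ (with equality iff $\rho^n_K=\rho^{n+1}_K$); together with $\volk,\dt>0$ this gives $\mcal{R}^{n+1,n}_{K,\dt}\ge 0$. I do not anticipate a genuine technical obstacle here, as the argument is a single application of convexity. The one point demanding care is the bookkeeping of which time level enters $h^\prime_\gamma$: using $\rho^{n+1}_K$ rather than $\rho^n_K$ is exactly what turns the remainder into a positive relative-energy term, thereby supplying the numerical dissipation that will underpin the discrete energy-stability estimate established later in this section.
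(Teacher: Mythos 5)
Your proposal is correct and follows essentially the same route as the paper: multiply the mass update \eqref{eq:disc_mas_updt} by $\abs{K}h^\prime_\gamma(\rho^{n+1}_K)$ and use convexity of $h_\gamma$ to show the remainder is non-negative. The only (cosmetic) difference is that you write the remainder exactly as $\tfrac{\abs{K}}{\dt}\Pi_\gamma(\rho^n_K\vert\rho^{n+1}_K)$, whereas the paper expresses the same quantity in Lagrange form, $\tfrac{1}{2}\tfrac{\abs{K}}{\dt}h^{\prime\prime}_\gamma(\bar{\rho}_K^{n+\half})(\rho^{n+1}_K-\rho^n_K)^2$ with $\bar{\rho}_K^{n+\half}\in\llbracket\rho^n_K,\rho^{n+1}_K\rrbracket$, via a second-order Taylor expansion.
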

\begin{proof}
  Multiplying the discrete mass update \eqref{eq:disc_mas_updt} by
  $\abs{K}h_{\gamma}^{\prime}(\rho_{K}^{n+1})$ and using a
  second-order Taylor expansion yields 
  \begin{align}
    0 &=\frac{\abs{K}}{\delta
        t}h_{\gamma}^{\prime}(\rho_{K}^{n+1})(\rho_K^{n+1}-\rho_K^n)+h_{\gamma}^{\prime}(\rho_{K}^{n+1})\sum_{\sigma\in\mcal{E}(K)}F_{\sigma,K}(\rho^{n+1},\uu{u}^n) 
    \\ 
      &=\frac{\abs{K}}{\delta
        t}\big(h_{\gamma}^{\prime}(\rho_K^{n+1})-h_{\gamma}^{\prime}(\rho_K^n)\big)+h^\prime_\gamma(\rho^{n+1}_K)\sum_{\sigma\in\mcal{E}(K)}F_{\sigma,K}(\rho^{n+1},
        \uu{u}^n) \nonumber \\
      &\quad+\half\frac{\abs{K}}{\delta
        t}h_{\gamma}^{\prime\prime}(\bar{\rho}_K^{n+\half})(\rho_K^{n+1}-\rho_K^n)^2,\nonumber 
  \end{align}
  where
  $\bar{\rho}_K^{n+\half}\in\llbracket
  \rho_K^{n+1},\rho_K^n\rrbracket$. The non-negativity of the
  remainder term $\mcal{R}_{K,\dt}^{n+1,n}$ follows from the convexity
  of $h_\gamma$.  
\end{proof} 
The following discrete relative internal energy balance
follows as an immediate corollary to Lemma\,\ref{lem:disc_int}. 
\begin{corollary}[Positive renormalisation identify]
  \label{cor:disc_int_pot_entr}
  Let $(\rho^{n+1},\vel^{n+1})\in\Lm\times\Hez$ be a solution to the
  semi-implicit scheme
  \eqref{eq:disc_mas_updt}-\eqref{eq:disc_mom_updt}. Then for all
  $K\in\M$, the following holds:
  \begin{equation}
    \label{eq:disc_int_pot_entr}
    \begin{aligned}
      &\frac{\abs{K}}{\veps^2\delta
        t}\big(\Pi_{\gamma}(\rho^{n+1}_{K}|\tilde{\rho}_K)-\Pi_{\gamma}(\rho^{n}_{K}|\tilde{\rho}_K)\big)+\frac{1}{\veps^2}h_{\gamma}^{\prime}(\rho^{n+1}_K)\sum_{\sigma\in\mcal{E}(K)}F_{\sigma,K}(\rho^{n+1},\uu{u}^n) + \mcal{R}_{K,\delta t}^{n+1,n}\\
      &=\frac{1}{\veps^2}h_{\gamma}^{\prime}(\tilde{\rho}_K)\sum_{\sigma\in\mcal{E}(K)}F_{\sigma,K}(\rho^{n+1},\uu{u}^n),
    \end{aligned}
  \end{equation}
  where $\mcal{R}_{K,\delta t}^{n+1,n}\geq 0$.
\end{corollary}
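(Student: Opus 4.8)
The plan is to derive \eqref{eq:disc_int_pot_entr} directly from the renormalisation identity \eqref{eq:disc_int} by a purely algebraic manipulation, using only the definition of $\Pi_\gamma$ and the mass update \eqref{eq:disc_mas_updt}; no new estimate is needed. The starting point is the observation that, since both $h_\gamma(\tilde\rho_K)$ and the slope $h_\gamma^\prime(\tilde\rho_K)$ are independent of the time level, the affine correction terms in $\Pi_\gamma(\cdot\,|\,\tilde\rho_K)$ telescope when the two time levels are subtracted. Concretely, from $\Pi_\gamma(\rho|\tilde\rho)=h_\gamma(\rho)-h_\gamma(\tilde\rho)-h_\gamma^\prime(\tilde\rho)(\rho-\tilde\rho)$ I would record the elementary identity
\[
\Pi_\gamma(\rho^{n+1}_K|\tilde\rho_K)-\Pi_\gamma(\rho^{n}_K|\tilde\rho_K)=\big(h_\gamma(\rho^{n+1}_K)-h_\gamma(\rho^{n}_K)\big)-h_\gamma^\prime(\tilde\rho_K)\big(\rho^{n+1}_K-\rho^{n}_K\big),
\]
the constant $h_\gamma(\tilde\rho_K)$ and the $\tilde\rho_K$ pieces cancelling between the two levels.

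Next I would multiply this identity by $\volk/\dt$ and use the mass update \eqref{eq:disc_mas_updt} in the form $\frac{\volk}{\dt}(\rho^{n+1}_K-\rho^{n}_K)=-\sum_{\sigma\in\Ek}F_{\sigma,K}(\rho^{n+1},\uu{u}^n)$ to eliminate the density difference, obtaining
\[
\frac{\volk}{\dt}\big(\Pi_\gamma(\rho^{n+1}_K|\tilde\rho_K)-\Pi_\gamma(\rho^{n}_K|\tilde\rho_K)\big)=\frac{\volk}{\dt}\big(h_\gamma(\rho^{n+1}_K)-h_\gamma(\rho^{n}_K)\big)+h_\gamma^\prime(\tilde\rho_K)\sum_{\sigma\in\Ek}F_{\sigma,K}(\rho^{n+1},\uu{u}^n).
\]
I would then substitute the renormalisation identity \eqref{eq:disc_int}, namely $\frac{\volk}{\dt}(h_\gamma(\rho^{n+1}_K)-h_\gamma(\rho^{n}_K))=-h_\gamma^\prime(\rho^{n+1}_K)\sum_{\sigma\in\Ek}F_{\sigma,K}(\rho^{n+1},\uu{u}^n)-\mcal{R}_{K,\dt}^{n+1,n}$, divide the resulting equation by $\veps^2$, and move the convective contribution carrying $h_\gamma^\prime(\rho^{n+1}_K)$ together with the remainder to the left-hand side. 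This produces exactly \eqref{eq:disc_int_pot_entr}, the remainder appearing as $\frac{1}{\veps^2}\mcal{R}_{K,\dt}^{n+1,n}$; its non-negativity is inherited verbatim from the convexity argument in the renormalisation lemma and requires no separate verification.

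Since every step is a substitution, there is essentially no analytic obstacle: the only point demanding a moment's care is the telescoping in the first display, where one must confirm that the $\tilde\rho_K$-dependent contributions genuinely cancel so that the difference of relative energies reduces to the difference of $h_\gamma$ plus the single term $h_\gamma^\prime(\tilde\rho_K)\sum_{\sigma}F_{\sigma,K}$. This surviving term is precisely what appears on the right-hand side of \eqref{eq:disc_int_pot_entr} after the mass update is invoked, and it is the exact discrete analogue of the source term $h_\gamma^\prime(\tilde\rho)\dive(\rho^\veps(\uu{u}^\veps-\delta\uu{u}^\veps))$ in the continuous positive renormalisation identity \eqref{eq:positive_renorm}, which is what makes the corollary its faithful discrete counterpart.
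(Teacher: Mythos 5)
Your proposal is correct and follows exactly the route the paper intends: the paper gives no separate proof, stating that the corollary is immediate from the renormalisation identity \eqref{eq:disc_int}, and your algebra (telescoping the $\tilde{\rho}_K$-terms in $\Pi_\gamma$, eliminating $\rho^{n+1}_K-\rho^n_K$ via the mass update \eqref{eq:disc_mas_updt}, substituting \eqref{eq:disc_int}, and dividing by $\veps^2$) is precisely that omitted computation. Your remark that the remainder in \eqref{eq:disc_int_pot_entr} is $\frac{1}{\veps^2}\mcal{R}_{K,\delta t}^{n+1,n}$, whose non-negativity is inherited from the convexity of $h_\gamma$, correctly resolves the only notational subtlety.
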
  
\subsection{Kinetic Energy Estimate}
\label{subsec:kin}
In the following lemma we establish a discrete kinetic energy estimate
for the numerical scheme.
\begin{lemma}[Discrete kinetic energy identity]Any solution to the
  system \eqref{eq:disc_mas_updt}-\eqref{eq:disc_mom_updt} satisfies
  the following equality for $1\leq i\leq d,\;
  \s\in\E_\mathrm{int}^{(i)}$ and $0\leq n\leq{N-1}$:
  \begin{equation}
    \label{eq:dis_kinbal}
    \begin{aligned}
      & \frac{1}{2}\frac{\abs{\Ds}}{\dt}\big(\rho_{\Ds}^{n+1}(u_\s^{n+1})^2-\rho_{\Ds}^{n}(u_\s^{n})^2\big)+\sum_{\epsilon\in\tilde\E(\Ds)}F_{\epsilon,\s}(\rho^{n+1},\uu{u}^{n})\frac{|u_\epsilon^{n}|^2}{2}+\frac{1}{\veps^2}\abs{\Ds}(\D_\E^{(i)}p^{n+1})_\sigma u_\s^n
      \\
      &+\mcal{R}_{\s,\dt}^{n+1}=-\frac{1}{\veps^2}\abs{\Ds}\rho^{n+1}_\sigma (\D_\E^{(i)}\phi)_\s u_\s^n,
    \end{aligned}
  \end{equation}
\end{lemma}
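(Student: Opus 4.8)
The plan is to test the discrete momentum balance against the \emph{explicit} velocity $u_\s^n$ and to recast the resulting time-difference and convection terms into conservative kinetic-energy form, mirroring the continuous computation behind \eqref{eq:kinbal}; every discretisation defect is then collected into $\mcal{R}_{\s,\dt}^{n+1}$. Concretely, I would first multiply the momentum update \eqref{eq:disc_mom_updt} by $\abs{\Ds}\,u_\s^n$. The pressure and gravity terms are linear in $u_\s^n$, so they reproduce verbatim $\frac{1}{\veps^2}\abs{\Ds}(\D_\E^{(i)}p^{n+1})_\s u_\s^n$ on the left and $-\frac{1}{\veps^2}\abs{\Ds}\rho^{n+1}_\s(\D_\E^{(i)}\phi)_\s u_\s^n$ on the right of \eqref{eq:dis_kinbal}; the whole work lies in the inertial and convective contributions.

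For the time derivative, abbreviating $a=\rho_{\Ds}^{n+1}$, $b=\rho_{\Ds}^{n}$, $U=u_\s^{n+1}$, $V=u_\s^{n}$, I would invoke the elementary identity
\[
(aU-bV)\,V=\half\big(aU^2-bV^2\big)-\half\,a\,(U-V)^2+\half\,(a-b)\,V^2 .
\]
Multiplying by $\abs{\Ds}/\dt$ isolates the desired conservative difference $\half\frac{\abs{\Ds}}{\dt}\big(\rho_{\Ds}^{n+1}(u_\s^{n+1})^2-\rho_{\Ds}^{n}(u_\s^{n})^2\big)$, an inertial defect $-\half\frac{\abs{\Ds}}{\dt}\rho_{\Ds}^{n+1}(u_\s^{n+1}-u_\s^{n})^2$, and a leftover $\half\frac{\abs{\Ds}}{\dt}(\rho_{\Ds}^{n+1}-\rho_{\Ds}^{n})(u_\s^{n})^2$. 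The decisive step is that the dual mass balance \eqref{eq:mas_dual} lets me replace $\frac{\abs{\Ds}}{\dt}(\rho_{\Ds}^{n+1}-\rho_{\Ds}^{n})$ by $-\sum_{\epsilon\in\tilde\E(\Ds)}F_{\epsilon,\s}(\rho^{n+1},\uu{u}^n)$, turning this leftover into $-\half(u_\s^{n})^2\sum_{\epsilon}F_{\epsilon,\s}$.

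It then remains to combine this with the convective term $u_\s^n\sum_{\epsilon}F_{\epsilon,\s}u_{\epsilon,\mathrm{up}}^n$ so as to recover the flux term $\sum_{\epsilon}F_{\epsilon,\s}\frac{|u_\epsilon^n|^2}{2}$ of \eqref{eq:dis_kinbal}, where $u_\epsilon^n$ denotes the upwind value $u_{\epsilon,\mathrm{up}}^n$. For each dual face $\epsilon=\Ds|D_{\s'}$ the grouped quantity is $F_{\epsilon,\s}\big(u_\s^n u_{\epsilon,\mathrm{up}}^n-\half(u_\s^n)^2\big)$; subtracting $F_{\epsilon,\s}\frac{(u_{\epsilon,\mathrm{up}}^n)^2}{2}$ leaves the per-edge defect $-\half F_{\epsilon,\s}(u_\s^n-u_{\epsilon,\mathrm{up}}^n)^2$. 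Splitting by the sign of $F_{\epsilon,\s}$ via the upwind rule \eqref{eq:mom_up} — so that $u_{\epsilon,\mathrm{up}}^n=u_\s^n$ when $F_{\epsilon,\s}\ge0$ (zero contribution) and $u_{\epsilon,\mathrm{up}}^n=u_{\s'}^n$ when $F_{\epsilon,\s}<0$ — this defect equals $\half F_{\epsilon,\s}^-(u_\s^n-u_{\s'}^n)^2$, the usual non-negative upwind dissipation. Summing over $\epsilon\in\tilde\E(\Ds)$ (external faces drop out since $F_{\epsilon,\s}=0$ there) and collecting, I arrive at \eqref{eq:dis_kinbal} with
\[
\mcal{R}_{\s,\dt}^{n+1}=\half\sum_{\epsilon\in\tilde\E(\Ds)}F_{\epsilon,\s}^-\,(u_\s^n-u_{\s'}^n)^2-\half\,\frac{\abs{\Ds}}{\dt}\,\rho_{\Ds}^{n+1}\,(u_\s^{n+1}-u_\s^{n})^2 .
\]

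The one genuinely delicate point will be the convective bookkeeping: one must use that the \emph{same} dual mass fluxes $F_{\epsilon,\s}(\rho^{n+1},\uu{u}^n)$ appear both in the momentum convection of \eqref{eq:disc_mom_updt} and in the dual mass balance \eqref{eq:mas_dual}, so that the $-\half(u_\s^n)^2\sum_{\epsilon}F_{\epsilon,\s}$ produced from the mass balance pairs correctly with the upwind fluxes to complete the square. I would emphasise that $\mcal{R}_{\s,\dt}^{n+1}$ is the algebraic sum of the non-negative upwind dissipation ($F_{\epsilon,\s}^-\ge0$) and the non-positive inertial defect ($\rho_{\Ds}^{n+1}>0$ by Theorem\,\ref{thm:existence}); its sign is therefore not fixed at this stage, and controlling it is precisely what the velocity stabilisation \eqref{eq:imp_vel_stab} together with a CFL-type restriction will achieve when the kinetic and internal energy balances are assembled into the total discrete energy estimate.
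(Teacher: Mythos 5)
Your proposal is correct and is exactly the paper's argument: the paper's proof consists of the single instruction to multiply \eqref{eq:disc_mom_updt} by $\abs{\Ds}u_\s^n$ and invoke the dual mass balance \eqref{eq:mas_dual}, which is precisely what you carry out in detail, and your resulting remainder coincides term-by-term with \eqref{eq:Rsn+1}. Your closing observation that the sign of $\mcal{R}_{\s,\dt}^{n+1}$ is not fixed at this stage and is only controlled later (via the stabilisation and the CFL-type condition in Theorem\,\ref{thm:dis_totbal}) is also consistent with how the paper proceeds.
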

where the remainder term $\mcal{R}_{\s,\dt}^{n+1}$ is defined by
\begin{equation}
  \label{eq:Rsn+1}
  \mcal{R}_{\sigma,\dt}^{n+1}= -\frac{\abs{D_\sigma}}{2\dt}\rho_{D_\sigma}^{n+1}(u_\sigma^{n+1}-u_\sigma^n)^2 +\sum_{\substack{\epsilon\in\tilde\E(\Ds)\\\epsilon=\Ds|D_{\s^{\prime}}}}F_{\epsilon,\s}(\rho^{n+1},\uu{u}^{n})^{-}\frac{(u_\s^n-u_{\s^{\prime}}^n)^2}{2}.
\end{equation}
\begin{proof}
  Multiplying the momentum balance equation \eqref{eq:disc_mom_updt} by
  $\abs{\Ds}u_\s^n$ and using the dual mass balance
  \eqref{eq:mas_dual} yields the identity. 
\end{proof}
\subsection{Discrete Total Energy Estimate}
Following the discrete relative internal energy balance
\eqref{eq:disc_int_pot_entr} and the discrete kinetic energy balance
\eqref{eq:dis_kinbal}, we now proceed to prove a discrete counterpart
of the energy stability \eqref{eq:cont_toten_int} in the following
theorem. 
\begin{theorem}[Total energy balance]
  \label{thm:dis_totbal}
  Any solution to the system
  \eqref{eq:disc_mas_updt}-\eqref{eq:disc_mom_updt} satisfies the
  following energy inequality:
  \begin{equation}
    \label{eq:dis_totbal}
    \frac{1}{\veps^2}\sum_{K\in\M}|K|\Pi_\gamma(\rho_{K}^{n+1}|\tilde{\rho}_K)
    +\sum_{\s\in\E_\mathrm{int}}\abs{\Ds}\frac{1}{2}
    \rho_{\Ds}^{n+1}(u_{\s}^{n+1})^2 \leq 
    \frac{1}{\veps^2}\sum_{K\in\M}|K|\Pi_\gamma(\rho_{K}^{n}|\tilde{\rho}_K)
    +\sum_{\s\in\E_\mathrm{int}}|\Ds|\frac{1}{2}\rho_{\Ds}^{n}(u_{\s}^{n})^2,
  \end{equation}
  under the time-step restrictions $\forall\s \in\E_\mathrm{int}^{(i)}, \ 1 \leq i \leq d$:
  \begin{equation}
    \label{eq:TotEn_tstep}
    \displaystyle
    \frac{\dt}{|\Ds|}\sum_{\epsilon\in\tilde\E(\Ds)}\frac{F_{\epsilon,\s}
      (\rho^{n+1},\uu{u}^n)^-}{\rho_{\Ds}^{n+1}}\leq\frac{1}{2},
  \end{equation}
  and the condition on $\eta$ such that $\forall\s
  \in\E_\mathrm{int}^{(i)}, \ 1 \leq i \leq d$: 
  \begin{equation}
\label{eq:TotEn_eta}
\eta\geq \frac{1}{\rho^{n+1}_{D_\sigma}}.
\end{equation}
\end{theorem}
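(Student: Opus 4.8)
The plan is to mirror the continuous computation leading to \eqref{eq:cont_energy_balance}: I would sum the positive renormalisation identity \eqref{eq:disc_int_pot_entr} over $K\in\M$, sum the kinetic energy identity \eqref{eq:dis_kinbal} over all $\s\in\E^{(i)}_\mathrm{int}$ and $1\le i\le d$, and add the two. Writing $g_\s:=(\D^{(i)}_\E p^{n+1})_\s+\rho^{n+1}_\s(\D^{(i)}_\E\phi)_\s$ for the discrete pressure-gravity residual on an edge $\s$, the target \eqref{eq:dis_totbal} will follow once the telescoping differences in $n$ are isolated and the aggregate of the three remainder terms generated by the addition is shown to be nonnegative.

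First I would treat the flux term in the internal energy sum. Since $\sum_{\s\in\Ek}F_{\s,K}(\rho^{n+1},\uu{u}^n)=\abs{K}\big(\dive_{\M}(\rho^{n+1}(\uu{u}^n-\delta\uu{u}^{n+1}))\big)_K$, the quantity $\frac{1}{\veps^2}\sum_{K}\big(h_\gamma^\prime(\rho^{n+1}_K)-h_\gamma^\prime(\tilde\rho_K)\big)\sum_{\s\in\Ek}F_{\s,K}$ can be rewritten, via the gradient-divergence duality \eqref{eq:weighted_disc_dual} with $q=\rho^{n+1}$, as an edge sum featuring $\rho^{n+1}_\s\big(\D^{(i)}_\E[h_\gamma^\prime(\rho^{n+1})-h_\gamma^\prime(\tilde\rho)]\big)_\s$. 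Using \eqref{eq:dis_pr_grd} to replace $\rho^{n+1}_\s(\D^{(i)}_\E h_\gamma^\prime(\rho^{n+1}))_\s$ by $(\D^{(i)}_\E p^{n+1})_\s$ and \eqref{eq:phi_dis} to identify $(\D^{(i)}_\E h_\gamma^\prime(\tilde\rho))_\s=-(\D^{(i)}_\E\phi)_\s$, this edge sum becomes exactly $-\frac{1}{\veps^2}\sum_\s\abs{\Ds}\,g_\s\,(u^n_\s-\delta u^{n+1}_\s)$. On the kinetic side, the convective term $\sum_\s\sum_{\epsilon\in\tilde\E(\Ds)}F_{\epsilon,\s}\tfrac12\abs{u^n_\epsilon}^2$ telescopes to zero because $F_{\epsilon,\s}=-F_{\epsilon,\s^\prime}$ and the upwind trace $u^n_\epsilon$ is single valued on each dual face, leaving the stiff contribution $+\frac{1}{\veps^2}\sum_\s\abs{\Ds}\,g_\s\,u^n_\s$. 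Adding the two sums, the $u^n_\s$ parts cancel and only the shift survives; by the definition \eqref{eq:imp_vel_stab} of $\delta u^{n+1}_\s$, the surviving term is the manifestly nonnegative dissipation \[ S:=\frac{1}{\veps^2}\sum_\s\abs{\Ds}\,g_\s\,\delta u^{n+1}_\s=\frac{\eta\,\dt}{\veps^4}\sum_\s\abs{\Ds}\,g_\s^2\ge0. \]

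At this stage \eqref{eq:dis_totbal} is equivalent to $S+\sum_K\mcal{R}_{K,\dt}^{n+1,n}+\sum_\s\mcal{R}_{\s,\dt}^{n+1}\ge0$. Because $\mcal{R}_{K,\dt}^{n+1,n}\ge0$ by the convexity of $h_\gamma$, the only genuine obstacle, and the point where the two hypotheses enter, is the negative contribution $-\frac{\abs{\Ds}}{2\dt}\rho^{n+1}_{\Ds}(u^{n+1}_\s-u^n_\s)^2$ buried in $\mcal{R}_{\s,\dt}^{n+1}$, cf.\ \eqref{eq:Rsn+1}. To control it I would substitute the explicit velocity update \eqref{eq:dis_vel_dual}, recast as $\abs{\Ds}\rho^{n+1}_{\Ds}(u^{n+1}_\s-u^n_\s)=\dt(A_\s-B_\s)$, where $A_\s:=\sum_{\epsilon}F_{\epsilon,\s}(\rho^{n+1},\uu{u}^n)^-(u^n_{\s^\prime}-u^n_\s)$ is the convective part and $B_\s:=\tfrac{\abs{\Ds}}{\veps^2}g_\s$ the pressure-gravity part, so that by Young's inequality \[ \frac{\abs{\Ds}}{2\dt}\rho^{n+1}_{\Ds}(u^{n+1}_\s-u^n_\s)^2=\frac{\dt(A_\s-B_\s)^2}{2\abs{\Ds}\rho^{n+1}_{\Ds}}\le\frac{\dt\,A_\s^2}{\abs{\Ds}\rho^{n+1}_{\Ds}}+\frac{\dt\,B_\s^2}{\abs{\Ds}\rho^{n+1}_{\Ds}}. \]

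The two terms are then matched with the two available nonnegative contributions. A Cauchy--Schwarz bound $A_\s^2\le\big(\sum_\epsilon F^-_{\epsilon,\s}\big)\big(\sum_\epsilon F^-_{\epsilon,\s}(u^n_{\s^\prime}-u^n_\s)^2\big)$ together with the CFL restriction \eqref{eq:TotEn_tstep} dominates the $A_\s$ term by $\tfrac12\sum_\epsilon F^-_{\epsilon,\s}(u^n_\s-u^n_{\s^\prime})^2$, which is precisely the positive part of $\mcal{R}_{\s,\dt}^{n+1}$; and the lower bound \eqref{eq:TotEn_eta}, $\eta\ge1/\rho^{n+1}_{\Ds}$, dominates the $B_\s$ term by the $\s$-summand $\tfrac{\eta\dt}{\veps^4}\abs{\Ds}g_\s^2$ of $S$. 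Summing over $\s$ yields $\frac{1}{2\dt}\sum_\s\abs{\Ds}\rho^{n+1}_{\Ds}(u^{n+1}_\s-u^n_\s)^2\le S+\tfrac12\sum_\s\sum_\epsilon F^-_{\epsilon,\s}(u^n_\s-u^n_{\s^\prime})^2$, hence $S+\sum_\s\mcal{R}_{\s,\dt}^{n+1}\ge0$, and the whole dissipation is nonnegative. Multiplying the combined identity by $\dt$ then gives \eqref{eq:dis_totbal}. The hard part is exactly this absorption: the factor $\tfrac12$ in \eqref{eq:TotEn_tstep} and the bound on $\eta$ are calibrated so that the cross term $(A_\s-B_\s)^2$ splits into two pieces each exactly dominated by one contribution, with no room to spare.
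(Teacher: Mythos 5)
Your proposal is correct and follows essentially the same route as the paper's proof: summing the positive renormalisation and kinetic identities, invoking the div-grad duality \eqref{eq:weighted_disc_dual} together with \eqref{eq:dis_pr_grd} and the discrete hydrostatic relation to isolate the dissipation $\frac{\eta\dt}{\veps^4}\sum_\s\abs{\Ds}g_\s^2$, and then absorbing the negative part of $\mcal{R}_{\s,\dt}^{n+1}$ via the velocity update \eqref{eq:dis_vel_dual}, the inequality $(a-b)^2\leq 2a^2+2b^2$, Cauchy--Schwarz, the CFL condition \eqref{eq:TotEn_tstep} and the bound \eqref{eq:TotEn_eta} — exactly the paper's estimates \eqref{eq:thm_TE_eq1}--\eqref{eq:dis_totenbal_press}, merely reorganised with the dissipation kept on the left rather than moved to the right-hand side.
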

\begin{proof} 
  We take sums over $K\in\M$ in \eqref{eq:disc_int_pot_entr} and over
  $\s\in\E_\mathrm{int}$ in \eqref{eq:dis_kinbal}. Upon adding the
  resulting expressions and recalling \eqref{eq:dis_pr_grd},
  \eqref{eq:dis_grad_psigm} and the discrete duality
  \eqref{eq:weighted_disc_dual} we obtain
  \begin{equation}
    \label{eq:thm_TE_eq3}
    \begin{aligned}
      &\frac{1}{\veps^2\dt}\sum_{K\in\M}|K|\Big(\Pi_\gamma(\rho_{K}^{n+1}|\tilde{\rho}_K)-\Pi_\gamma(\rho_{K}^{n}|\tilde{\rho}_K)\Big)+\sum_{\s\in\E_\mathrm{int}}\frac{1}{2}\frac{|\Ds|}{\dt}\Big(\rho_{\Ds}^{n+1}(u_\s^{n+1})^2-\rho_{\Ds}^{n}(u_\s^n)^2\Big)\\
      &\quad+\mcal{R}_{\M,\dt}^{n+1}+\mcal{R}_{\E,\dt}^{n+1}=-\frac{\eta\dt}{\veps^4}\sum_{i=1}^d\sum_{\s\in\E_\mathrm{int}^{(i)}}|\Ds|\left((\D_\E^{(i)}p^{n+1})_\s+\rho^{n+1}_\sigma (\D_\E^{(i)}\phi)_\sigma\right)^2.
    \end{aligned}
  \end{equation}
  Here, the global remainder terms $\mcal{R}^{n+1}_{\M,\dt}$ and
  $\mcal{R}^{n+1}_{\E,\dt}$ are 
  obtained by summing the local remainders $\mcal{R}^{n+1}_{K,\dt}$ and
  $\mcal{R}^{n+1}_{\s,\dt}$, respectively. Since $\mcal{R}^{n+1}_{\M,\dt}$ is non-negative
  unconditionally, the required stability estimate
  \eqref{eq:dis_totbal} follows from \eqref{eq:thm_TE_eq3} once we
  enforce the non-negativity of $\mcal{R}^{n+1}_{\E,\dt}$. To this end, we use the
  elementary inequality $(a+b)^2\leq 2a^2 + 2b^2,\, a,b\in\mbb{R}$, 
  in the velocity update \eqref{eq:dis_vel_dual} to yield   
  \begin{equation}
    \label{eq:thm_TE_eq1}
    \begin{aligned}
      \frac{1}{2}\rho_{\Ds}^{n+1}|u_\s^{n+1}-u_\s^n|^2&\leq\frac{(\dt)^2}{|\Ds|^2\rho_{\Ds}^{n+1}}\Bigg|-\sum_{\epsilon\in\tilde\E(\Ds)}(u_{\s^{\prime}}^n-u_\s^n)F_{\epsilon,\s}(\rho^{n+1},\uu{u}^n)^-\Bigg|^2\\
      &\quad+\frac{(\dt)^2}{\veps^4\rho^{n+1}_{\Ds}}\left((\D_\E^{(i)}p^{n+1})_\s+\rho^{n+1}_\sigma(\D_\E^{(i)}\phi)_\sigma\right)^2.
    \end{aligned}
  \end{equation}
  Upon an application of the Cauchy-Schwarz inequality, the first term
  on the right hand side of \eqref{eq:thm_TE_eq1} yields
  \begin{equation}
    \label{eq:rem_TE_CS}
    \begin{aligned}
    \Bigg|-\sum_{\epsilon\in\tilde\E(\Ds)}(u_{\s^{\prime}}^n-u_\s^n)
    F_{\epsilon,\s}(\rho^{n+1},\uu{u}^n)^-\Bigg|^2 &\leq
    \left(\sum_{\epsilon\in\E(\Ds)}\frac{F_{\epsilon,\s}(\rho^{n+1},\uu{u}^{n})^{-}}
      {\rho_{\Ds}^{n+1}}\right)\\
    &\times\left(\sum_{\substack{\epsilon\in\bar\E(\Ds)\\\epsilon=\Ds|D_{\s^{\prime}}}}
      F_{\epsilon,\sigma}(\rho^{n+1},\uu{u}^{n})^-(u_\s^{n}-u_{\s^\prime}^{n})^2\right).
    \end{aligned}
  \end{equation}
  Using the above inequality and grouping the like-terms on the right
  hand sides of \eqref{eq:thm_TE_eq1}-\eqref{eq:rem_TE_CS}, we finally
  obtain from \eqref{eq:thm_TE_eq3}, cf.\ also \cite{AGK23, DVB17}, the
  following inequality: 
  \begin{equation}
    \label{eq:dis_totenbal_press}
    \begin{aligned}
      &\frac{1}{\veps^2\dt}\sum_{K\in\M}|K|\Big(\Pi_\gamma(\rho_{K}^{n+1})-\Pi_\gamma(\rho_{K}^{n})\Big)+\sum_{\s\in\E_\intr}\frac{1}{2}\frac{|\Ds|}{\dt}\Big(\rho_{\Ds}^{n+1}(u_\s^{n+1})^2-\rho_{\Ds}^{n}(u_\s^n)^2\Big)\\
      &+\frac{\dt}{\veps^4}\sum_{i=1}^d\sum_{\s\in\E_\mathrm{int}^{(i)}}|\Ds|\left(\eta-\frac{1}{\rho^{n+1}_{\Ds}}\right)\left((\D_\E^{(i)}p^{n+1})_\s+\rho^{n+1}_\sigma (\D_\E^{(i)}\phi)_\sigma\right)^2\\
      &\leq\sum_{\s\in\E_\intr}\left(\frac{\dt}{|\Ds|}\sum_{\epsilon\in\E(\Ds)}\frac{F_{\epsilon,\s}(\rho^{n+1},\uu{u}^{n})^{-}}{\rho_{\Ds}^{n+1}}-\half\right)\left(\sum_{\substack{\epsilon\in\bar\E(\Ds)\\\epsilon=\Ds|D_{\s^{\prime}}}}F_{\epsilon,\sigma}(\rho^{n+1},\uu{u}^{n})^-(u_\s^{n}-u_{\s^\prime}^{n})^2\right).
    \end{aligned}
  \end{equation}
  The third term on the left hand side of the above estimate remains
  non-negative given that the parameter $\eta$ satisfies the condition
  \eqref{eq:TotEn_eta}, whereas the right hand side remains non-positive
  under the time-step restriction \eqref{eq:TotEn_tstep}.
\end{proof}
\begin{remark}
  For analogous treatments on momentum and velocity stabilised schemes
  in the context of the shallow water and barotropic Euler systems
  without source terms we refer to \cite{AGK23, DVB17, DVB20}.  
\end{remark}
\begin{remark}
  Note that the stability conditions \eqref{eq:TotEn_tstep} and
  \eqref{eq:TotEn_eta} are implicit in nature as they involve
  $\rho^{n+1}$. Nevertheless, proceeding as in \cite{AGK23, DVB20}, we
  can obtain a sufficient condition on the time-step which satisfies the
  requirement \eqref{eq:TotEn_tstep} and in addition yields  
  \begin{equation}
    \label{eq:rho_3/2}
    \frac{\rho_{\Ds}^n }{\rho_{\Ds}^{n+1}}\leq \frac{3}{2},
  \end{equation}
  from the dual mass balance \eqref{eq:mass_dual}. Note that
  \eqref{eq:rho_3/2} indeed enables us to make an explicit choice of
  $\eta$ as $\eta=\eta_1/\rho^n_{D_\s}$ at each interface $\s$, where
  $\eta_1>3/2$.   
\end{remark}

\subsection{Well-Balancing Property of the Semi-implicit Scheme}
In this section we discuss in detail the well-balancing property of
the semi-implicit scheme
\eqref{eq:disc_mas_updt}-\eqref{eq:disc_mom_updt}. Using the discrete
relative energy estimate \eqref{eq:dis_totbal} we show that the scheme
exactly satisfies and preserves a discrete form of the hydrostatic steady state. 

Upon direct verification we can immediately observe that the scheme
\eqref{eq:disc_mas_updt}-\eqref{eq:disc_mom_updt} admits the discrete
equilibrium solution   
\begin{equation}
  \begin{aligned}
    \rho^{n}_{K}&=\tilde{\rho}_{K},\;\forall K\in\mcal{M}, \;  0\leq n
    \leq N, \\
    u^{n}_{\sigma}&=0,\;\forall \sigma\in \mcal{E},\;  0\leq n \leq N, 
  \end{aligned}\label{eq:disc_stat_soln}
\end{equation} 
with $\tilde{\rho}\in L_{\mcal{M}}(\Omega)$ satisfying the
following discrete analogue of the hydrostatic steady state
\eqref{eq:hydrostat}:
\begin{equation}
\label{eq:disc_hydrostat}
(\partial^{(i)}_{\mcal{E}}\tilde{p})_{\sigma}=-\tilde{\rho}_{\sigma}
(\partial^{(i)}_{\mcal{E}}\phi)_{\sigma},\;\forall\sigma\in\mcal{E}^{(i)}_{\mathrm{int}},\;  
i=1,2,\dots,d.  
\end{equation}
In the above, $\tilde{p}=\sum_{K\in\mcal{M}}\tilde{p}_{K}\mcal{X}_K \in
L_{\mcal{M}}(\Omega)$ is a discretisation of the hydrostatic pressure,
i.e.\ $\tilde{p}_{K}=\wp(\tilde{\rho}_K),\; \forall K\in\mcal{M}$, and for
each $\sigma=K|L\in\mcal{E}^{(i)}_{\mathrm{int}},\;i=1,2,\dots,d$, the
interface hydrostatic density is given by $\tilde{\rho}_\s
= \tilde{\rho}_{KL}$, as in Lemma\,\ref{lem:rho_sig}. Recalling the
identity \eqref{eq:dis_pr_grd} and upon dividing by $\tilde{\rho}_\s$,
the relation \eqref{eq:disc_hydrostat} further yields
\begin{equation}
    \label{eq:dis_grad_psigm}
    (\D^{(i)}_{\E}(h^\prime_\gamma(\tilde{\rho}) + \phi))_\s =
    0,\;\forall\s\in\E^{(i)}_\mathrm{int},\;i=1,2,\dots,d,  
\end{equation}
which serves as a discrete counter part of the identity
\eqref{eq:int_pot}. Note that the discrete stabilisation parameter is 
consistent with the steady state in the sense that
$\delta\uu{u}^{n+1}=0$ whenever $\rho^{n+1}=\tilde{\rho}$. The
relative energy estimate \eqref{eq:dis_totbal} with respect to the
above discrete hydrostatic steady state gives a distance between the
successive numerical solutions and the stationary counterparts. Given
the initial condition as the steady state solution, the estimate
\eqref{eq:dis_totbal} is satisfied if and only if the discrete steady
state remains preserved in the subsequent updates.
\begin{theorem}
  \label{prop:well_balancing}
  The semi-implicit scheme has the following properties.
  \begin{enumerate}[label=(\roman*)]
  \item Suppose $(\rho^{n},\uu{u}^n)\in L_{\mcal{M}}(\Omega)\times
    \uu{H}_{\mcal{E},0}(\Omega)$, where $\rho^{n}>0$, and 
    $(\rho^{n+1},\uu{u}^{n+1})\in L_{\mcal{M}}(\Omega)\times
    \uu{H}_{\mcal{E},0}(\Omega)$ solves the semi-implicit scheme
    \eqref{eq:disc_mas_updt}-\eqref{eq:disc_mom_updt}. Then
    $\rho^{n+1}_{K}=\tilde{\rho}_{K},\,\forall K\in\mcal{M}$, and
    $u^{n+1}_{\sigma}=0,\,\forall \sigma\in\mcal{E}^{(i)},\, i = 1,
    2,\dots, d$,
    if and only if
    \begin{equation}
      \label{eq:zero_energy}
      \frac{1}{\veps^2}\sum_{K\in\M}|K|\Pi_\gamma(\rho_{K}^{n+1}|\tilde{\rho}_K)
      +\sum_{\s\in\E_\intr}\abs{\Ds}\half\rho_{\Ds}^{n+1}(u_{\s}^{n+1})^2
      = 0.
    \end{equation}
  \item The semi implicit scheme is well-balanced in the following sense:
    given the initial data $\rho^{0}=\tilde{\varrho}$,
    $\uu{u}^{0}_{\veps}=\uu{0}$, the scheme admits the unique
    stationary solution $(\tilde{\rho}, \uu{0})$.  
  \end{enumerate}
\end{theorem}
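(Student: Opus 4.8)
The plan is to deduce both assertions from the strict convexity of the Helmholtz function $h_\gamma$ together with the discrete total energy balance of Theorem~\ref{thm:dis_totbal}, rather than by manipulating the nonlinear update equations directly. The crucial structural observation is that the left-hand side of \eqref{eq:zero_energy} is a sum of \emph{non-negative} terms. Indeed, the relative internal energy $\Pi_\gamma(\rho^{n+1}_K\vert\tilde\rho_K)$ is the Bregman divergence associated with the strictly convex $h_\gamma$, so that $\Pi_\gamma(a\vert b)\geq 0$ with equality if and only if $a=b$; and by Theorem~\ref{thm:existence} the solution density is positive, whence the dual density $\rho^{n+1}_{\Ds}$, being a positive convex combination of $\rho^{n+1}_K,\rho^{n+1}_L$ through \eqref{eq:mass_dual}, is positive, making each kinetic contribution $\half\abs{\Ds}\rho^{n+1}_{\Ds}(u^{n+1}_\s)^2$ non-negative.

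For part (i), the forward implication is immediate: inserting $\rho^{n+1}_K=\tilde\rho_K$ and $u^{n+1}_\s=0$ annihilates every summand by the equality characterisation just recalled. For the converse I would use that a vanishing sum of non-negative terms forces each term to vanish. Hence $\Pi_\gamma(\rho^{n+1}_K\vert\tilde\rho_K)=0$ for every $K\in\M$, which by the strict convexity of $h_\gamma$ gives $\rho^{n+1}_K=\tilde\rho_K$; moreover $\rho^{n+1}_{\Ds}(u^{n+1}_\s)^2=0$ on every interior edge, so $u^{n+1}_\s=0$ since $\rho^{n+1}_{\Ds}>0$. Combined with the vanishing of the velocity on external edges built into $\uu{H}_{\E,0}(\Omega)$, this yields $\uu{u}^{n+1}=\uu{0}$.

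For part (ii), I would first record that $(\tilde\rho,\uu{0})$ is itself a solution of \eqref{eq:disc_mas_updt}-\eqref{eq:disc_mom_updt}; this is the direct verification already noted in \eqref{eq:disc_stat_soln}, using that the stabilisation $\delta\uu{u}^{n+1}$ vanishes at the steady state and that the discrete hydrostatic balance \eqref{eq:disc_hydrostat} holds. For uniqueness, let $(\rho^{n+1},\uu{u}^{n+1})$ be any solution generated from steady data $(\rho^n,\uu{u}^n)=(\tilde\rho,\uu{0})$. Evaluating the right-hand side of \eqref{eq:dis_totbal} at this datum gives zero, again by the equality characterisation; since the left-hand side is non-negative it must vanish, and part (i) then forces $\rho^{n+1}=\tilde\rho$, $\uu{u}^{n+1}=\uu{0}$. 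A straightforward induction on $n$ propagates this identity to every time level, giving the unique stationary solution.

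The one point requiring care is the applicability of the energy balance \eqref{eq:dis_totbal}, whose restrictions \eqref{eq:TotEn_tstep}-\eqref{eq:TotEn_eta} are implicit in $\rho^{n+1}$. At the steady state this is harmless: all mass fluxes $F_{\epsilon,\s}(\tilde\rho,\uu{0})$ vanish, so \eqref{eq:TotEn_tstep} holds trivially and \eqref{eq:TotEn_eta} is secured by the prescribed choice of $\eta$. For a putative non-steady solution one invokes the existence and positivity guaranteed by Theorem~\ref{thm:existence} together with the sufficient explicit time-step condition noted thereafter, so that the energy inequality is available and the argument closes.
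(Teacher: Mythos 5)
Your proposal is correct and takes essentially the same route as the paper: part (i) from the non-negativity and equality characterisation of $\Pi_\gamma$ together with the positivity of the dual densities, and part (ii) by inserting the steady datum into the energy inequality \eqref{eq:dis_totbal}, forcing the left-hand side of \eqref{eq:zero_energy} to vanish, and inducting on the time level. Your additional remark on verifying the applicability of the restrictions \eqref{eq:TotEn_tstep}--\eqref{eq:TotEn_eta} is a point the paper's one-line proof passes over silently, but it does not alter the substance of the argument.
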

\begin{proof}
  The result in (i) readily follows from Theorem\,\ref{thm:dis_totbal}
  on energy stability along with the fact that
  $\Pi_\gamma(\rho_1|\rho_2)\geq 0$ for any $\rho_1, \rho_2\in\mbb{R}$
  and $\Pi_\gamma(\rho_1|\rho_2)=0$ if and only if
  $\rho_1=\rho_2$. Upon applying induction on the time-steps we obtain
  (ii) as a direct consequence of (i).  
\end{proof}
\section{Weak Consistency of the Scheme}
\label{sec:weak_cons}

Goal of this section is to show a Lax-Wendroff-type weak consistency
of the scheme \eqref{eq:disc_mas_updt}-\eqref{eq:disc_mom_updt}. We
consider a sequence of numerical solutions generated upon successive
mesh refinements, and satisfying certain boundedness assumptions. If
the sequence of solutions converges strongly to a limit, then the
limit must be a weak solution of the Euler equations. Considering a
bounded initial data $(\rho^{\veps}_0,\uu{u}^{\veps}_0)\in
L^\infty(\Omega)^{1+d}$ and recalling the definition of a weak
solution, cf.\ Definition~\ref{defn:weak_soln}, we have the following
Lax-Wendroff-type consistency formulation for the semi-implicit scheme
\eqref{eq:cons_mas}-\eqref{eq:cons_mom}. In the theorem, we take the
liberty to suppress $\veps$ wherever necessary for the sake of
simplicity of writing. 
\begin{theorem}
\label{thm:weak_cons}
Let $\Omega$ be an open bounded set of $\mbb{R}^d$. Assume that
$\big(\mcal{T}^{(m)},\delta t^{(m)}\big)_{m\in\mbb{N}}$ is a sequence
of discretisations such that both $\lim_{m\rightarrow +\infty}\delta
t^{(m)}$ and $\lim_{m\rightarrow +\infty}h^{(m)}$ are $0$. Let
$\big(\rho^{(m)},\uu{u}^{(m)}\big)_{m\in\mbb{N}}$ be the corresponding
sequence of discrete solutions with respect to an initial data
$(\rho^\veps_0,\uu{u}^\veps_0)\in L^\infty(\Omega)^{1+d}$. We assume
that $(\rho^{(m)},\uu{u}^{(m)})_{m\in\mbb{N}}$ satisfies the
following.
\begin{enumerate}[label=(\roman*)]
\item $\big(\rho^{(m)},\uu{u}^{(m)}\big)_{m\in\mbb{N}}$ is uniformly
  bounded in $L^\infty(Q)^{1+d}$, i.e.\ 
\begin{align}
  \underbar{C}<(\rho^{(m)})^n_K &\leq \bar{C},
  \ \forall K\in\mcal{M}^{(m)}, \ 0\leq n\leq N^{(m)},
  \ \forall m\in\mbb{N}\label{eq:dens_abs_bound}, \\
|(u^{(m)})^n_\sigma| &\leq C, \ \forall \sigma\in\mcal{E}^{(m)}, \
 0\leq n\leq N^{(m)}, \ \forall
 m\in\mbb{N},\label{eq:u_abs_bound}
\end{align}
where $\underbar{C}, \bar{C}, C>0$ are constants independent of the
discretisations.  
\item $\big(\rho^{(m)},\uu{u}^{(m)}\big)_{m\in\mbb{N}}$ converges to
  $(\rho^\veps,\uu{u}^\veps)\in L^\infty(0, T;
  L^\infty(\Omega)^{1+d})$ in $L^r(Q_T)^{1+d}$ for $1\leq r<\infty$.
  
\end{enumerate}
We also assume that the sequence of discretisations
$\big(\mcal{T}^{(m)},\delta t^{(m)}\big)_{m\in\mbb{N}}$ satisfies the
following mesh regularity conditions:
\begin{equation}
\label{eq:bound_meshpar}
\frac{\delta t^{(m)}}{\min_{K\in\mcal{M}^{(m)}}\abs{K}}\leq\theta,\
\max_{K \in \mcal{M}^{(m)}}
\frac{\diam(K)^2}{\abs{K}}\leq\theta,\;\forall m\in\mbb{N}, 
\end{equation}
where $\theta>0$ is independent of the discretisations. 
Then, $(\rho^\veps,\uu{u}^\veps)$ satisfies the weak formulation
\eqref{eq:cont_weak_soln_mas}-\eqref{eq:cont_weak_soln_mom}.
\end{theorem}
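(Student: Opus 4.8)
The plan is to carry out the standard Lax--Wendroff consistency argument: for each of the two weak formulations \eqref{eq:cont_weak_soln_mas} and \eqref{eq:cont_weak_soln_mom}, I would start from the discrete scheme \eqref{eq:disc_mas_updt}--\eqref{eq:disc_mom_updt}, multiply by the cell/edge averages of a fixed smooth test function $\varphi$ (resp.\ $\uu{\varphi}$), sum over all cells and all time-levels, and then perform discrete summation-by-parts (via the duality \eqref{eq:weighted_disc_dual}) to move the discrete differences onto the test function. Using the mesh regularity \eqref{eq:bound_meshpar}, the uniform bounds \eqref{eq:dens_abs_bound}--\eqref{eq:u_abs_bound}, and the $L^r$ convergence hypothesis (ii), I would then pass to the limit $m\to\infty$, showing that each discrete sum converges to the corresponding integral in the continuous weak formulation, while all the consistency error terms (those coming from replacing point values by cell averages, from the upwinding, and from the velocity shift $\delta\uu{u}$) vanish.

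\medskip\noindent
More concretely, first I would treat the mass equation. Multiplying \eqref{eq:disc_mas_updt} by $\dt\,\varphi(t_n,\uu{x}_K)$ and summing, the discrete time-derivative term is rearranged by a discrete Abel summation to produce $\sum_n\sum_K |K|\rho^{n+1}_K\frac{\varphi(t_{n+1},\cdot)-\varphi(t_n,\cdot)}{\dt}$ plus an initial-datum contribution, converging to $\int_0^T\!\int_\Omega\rho^\veps\D_t\varphi$ and the $\rho_0$ term on the right of \eqref{eq:cont_weak_soln_mas}. The flux term $\sum_\sigma F_{\sigma,K}(\rho^{n+1},\uu{u}^n)\varphi$, after summation-by-parts, converges to $\int_0^T\!\int_\Omega\uu{m}^\veps\cdot\grd\varphi$; here I must verify two things: that the interface density $\rho_\sigma=\rho_{KL}$ from Lemma~\ref{lem:rho_sig} converges to $\rho^\veps$ on the interface (it lies in $\llbracket\rho_K,\rho_L\rrbracket$, so this follows from strong convergence and the bounds), and that the velocity-shift contribution $\sum_\sigma |\sigma|\rho_\sigma\,\delta u_{\sigma,K}\,\varphi$ is negligible. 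This last point is where the scaling $\delta u^{n+1}_\sigma=\frac{\eta\dt}{\veps^2}[(\D^{(i)}_\E p^{n+1})_\sigma+\rho^{n+1}_\sigma(\D^{(i)}_\E\phi)_\sigma]$ matters: since $\veps$ is held fixed and $\dt^{(m)}\to 0$, the shift carries an explicit $\dt$ factor, so its contribution is $O(\dt)$ times a bounded sum and vanishes.

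\medskip\noindent
For the momentum equation the structure is identical but the bookkeeping is heavier, since the unknowns live on the dual mesh and the convective flux \eqref{eq:mom_flux_up} is upwinded. The time-derivative and pressure terms are handled as above, using \eqref{eq:weighted_disc_dual} to turn $\sum_\sigma |\Ds|(\D^{(i)}_\E p^{n+1})_\sigma\varphi_\sigma$ into $-\int p^\veps\,\dive\uu\varphi$ and treating the source term $-\frac{1}{\veps^2}\rho^{n+1}_\sigma(\D^{(i)}_\E\phi)_\sigma$ directly, the two combining to match the stiff terms in \eqref{eq:cont_weak_soln_mom}. The convective flux requires showing that the upwind value $u^n_{\epsilon,\mathrm{up}}$ may be replaced by a consistent centred value up to an error controlled by $\sum_\epsilon|F_\epsilon||u^n_{\s'}-u^n_\s|$, which under the mesh-regularity condition $\diam(K)^2/|K|\le\theta$ and the BV-type control implicit in strong $L^r$ convergence tends to $0$.

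\medskip\noindent
\textbf{The main obstacle} I anticipate is the control of the upwinding and numerical-diffusion error terms in the momentum flux: one must bound quantities such as $\sum_{n}\dt\sum_\epsilon |F_{\epsilon,\sigma}^-|\,|u^n_{\s'}-u^n_\s|\,|\grd\uu\varphi|$ and show they vanish in the limit, which typically needs either a discrete weak-BV estimate or a careful use of the $L^r$-strong convergence together with the regularity \eqref{eq:bound_meshpar} to absorb the mesh-size factors coming from the difference of neighbouring edge velocities. The time-derivative terms also require the measure-theoretic argument that cell averages of the fixed smooth $\varphi$ converge uniformly with an $O(\diam K)$ error, which is routine but must be tracked against the $1/|K|$ weights; the regularity assumption \eqref{eq:bound_meshpar} is precisely what makes these errors integrable and vanishing. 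By contrast, because $\veps$ is fixed throughout this theorem, none of the $1/\veps^2$ or $1/\veps^4$ factors cause trouble—the stiff terms are simply treated as order-one coefficients.
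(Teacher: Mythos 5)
Your overall strategy is the standard Lax--Wendroff machinery and in that sense matches the paper; the only structural difference is that the paper does not re-derive the consistency of the discrete convection operators and of the pressure gradient, but cites the known MAC-scheme results (Gallou\"et et al.) for those terms and devotes the proof entirely to the genuinely new contributions: the remainder generated by the velocity shift $\delta\uu{u}$ and the gravitational source term. That division of labour matters, because the new remainder is precisely where your proposal contains a genuine gap.

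You claim that the shift contribution is negligible because $\delta u^{n+1}_\sigma$ ``carries an explicit $\dt$ factor, so its contribution is $O(\dt)$ times a bounded sum.'' The sum in question is \emph{not} bounded uniformly in $m$. Indeed, $\delta u^{n+1}_\sigma=\frac{\eta\dt}{\veps^2}\big[(\D^{(i)}_\E p^{n+1})_\sigma+\rho^{n+1}_\sigma(\D^{(i)}_\E\phi)_\sigma\big]$ contains the discrete gradient $(\D^{(i)}_\E p^{n+1})_\sigma=\frac{\abs{\sigma}}{\abs{D_\sigma}}(p^{n+1}_L-p^{n+1}_K)$, and under hypotheses (i)--(ii) the jump $p^{n+1}_L-p^{n+1}_K$ is only bounded, not $O(h^{(m)})$; hence the discrete gradient is $O(1/h^{(m)})$ and, after factoring out the explicit $\dt$, the remaining sum grows like $1/h^{(m)}$. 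The factor $\dt$ alone therefore gives only $O(\dt^{(m)}/h^{(m)})$, which does not tend to zero in general (in particular for $d=1$ the first condition in \eqref{eq:bound_meshpar} only yields boundedness of this ratio). The correct treatment, which is what the paper does, is: (a) exploit the antisymmetry $\delta u_{\sigma,K}=-\delta u_{\sigma,L}$ to sum by parts against the test function and gain a factor $\diam(K)$ from $\abs{\psi^n_K-\psi^n_L}$, producing the remainder $R^{(m)}_1\lesssim\sum_n\dt\sum_K\diam(K)\sum_{\sigma\in\E(K)}\abs{\sigma}\,\abs{\delta u^{n+1}_\sigma}$; (b) split it into a pressure part $R^{(m)}_{1,1}$ controlled by $\eta\dt\frac{\abs{\sigma}}{\abs{D_\sigma}}\abs{\rho^n_L-\rho^n_K}$ and a potential part $R^{(m)}_{1,2}$ controlled by $\abs{\phi_L-\phi_K}$; and (c) conclude via the mesh regularity \eqref{eq:bound_meshpar} \emph{together with} the discrete space-translate estimates that follow from the strong $L^r$ convergence of $\rho^{(m)}$ (the same weak-BV-type ingredient you correctly anticipate for the upwinding errors in the momentum flux, but fail to invoke here). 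Without step (c) your argument for the mass equation does not close, and the same repair is needed for the shift terms in the momentum convection flux.
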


\begin{proof}
  First, we consider the limits of the discrete convection operators in
  \eqref{eq:disc_mas_updt}-\eqref{eq:disc_mom_updt}, borrowing the
  results obtained in \cite{GHL19, GHL22, HLN+23}. Since our treatment
  closely follows the one in the above references, we skip most of
  the detailed calculations and focus only on those terms arising from
  the stabilisation terms. We consider a smooth test function $\psi\in
  C^\infty_c([0,T)\times\bar{\Omega})$ and for each $m\in\mbb{N}$, 
  let $\psi^n_K$ denote the average of $\psi$ on $(t^n,t^{n+1})\times
  K$ for each $n=0,\dots,N^{(m)}-1,\; K\in\M^{(m)}$. We multiply the
  mass update \eqref{eq:disc_mas_updt} by $\dt\abs{K}\psi^n_K$ and
  take sum over all $n=0,\dots,N^{(m)}-1,\; K\in\M^{(m)}$ to obtain 
  \begin{equation}
    \label{eq:dis_mass_lw}
    \sum_{n=0}^{N^{(m)}-1}\dt\sum_{K\in\M^{(m)}}\abs{K}\mcal{C}_{\mcal{T}^{(m)}}
    (\rho^{(m)},\uu{u}^{(m)})^n_K
    \psi^n_K + R^{(m)}_1=0,
  \end{equation}
  where
  $\mcal{C}_{\mcal{T}^{(m)}}(\rho^{(m)},\uu{u}^{(m)}):Q_T\to\mbb{R}$
  denotes the discrete convection operator
  \begin{equation}
    \label{eq:mass_disconv1}
    \mcal{C}_{\mcal{T}^{(m)}}(\rho^{(m)},\uu{u}^{(m)})(t,\uu{x})
    =\sum_{n=0}^{N^{(m)}-1}\sum_{K\in\M^{(m)}}\mcal{C}_{\mcal{T}^{(m)}}(\rho^{(m)},\uu{u}^{(m)})^n_K
    \mcal{X}_{(t^n,t^{n+1})}(t)\mcal{X}_K(\uu{x}).
  \end{equation}
  In the above, we have denoted 
  \begin{equation}
    \label{eq:mass_disconv2}
    \mcal{C}_{\mcal{T}^{(m)}}(\rho^{(m)},\uu{u}^{(m)})^n_K =
    \frac{1}{\dt}(\rho^{n+1}_K-\rho^n_K)
    +\frac{1}{\abs{K}}\sum_{\substack{\sigma\in\E(K)\\\sigma=K\vert
        L}}
    \abs{\sigma}\rho^{n+1}_\sigma u^n_{\s,K},
    \;n=0,\dots,N^{(m)}-1,\;K\in\M^{(m)}.
  \end{equation}
  In \eqref{eq:dis_mass_lw}, $R^{(m)}_1$ is the remainder term
  due to the velocity stabilisation given by 
 \begin{equation}
   R^{(m)}_1=\sum_{n=0}^{N^{(m)}-1}\dt^n\sum_{K\in\M^{(m)}}
   \diam(K)\sum_{\s\in\E(K)}\abs{\s}\abs{\delta
     u^{n+1}_\sigma}.
 \end{equation}
 Proceeding as in \cite[Lemma 4.1]{GHL22} with the discrete convection
 operator \eqref{eq:mass_disconv1}-\eqref{eq:mass_disconv2} shows that
 the first term on the left 
 hand side of \eqref{eq:dis_mass_lw} yields the weak formulation
 \eqref{eq:cont_weak_soln_mas} upon taking the limit
 $m\to\infty$. Therefore, it remains to show that the remainder term
 $R^{(m)}_1\to 0$ as $m\to\infty$. To this end, we further decompose
 $R^{(m)}_1$ as 
 \begin{equation}
   R^{(m)}_1\leq R_{1,1}^{(m)}+R_{1,2}^{(m)},
 \end{equation}
 where
 \begin{align}
   \label{eq:pr_grd_trunc}
   R_{1,1}^{(m)}&=C\sum_{n=0}^{N^{(m)}-1}\dt\sum_{K\in\M^{(m)}}
                  \diam(K)\sum_{\s\in\E(K)}\abs{\s}\eta\dt
                  \frac{\abs{\s}}{\abs{\Ds}}\abs{\rho^n_{L}-\rho^n_{K}},\\ 
   \label{eq:source_trunc}
   R_{1,2}^{(m)}&=C\sum_{n=0}^{N^{(m)}-1}\dt\sum_{K\in\M^{(m)}}
                  \diam(K)\sum_{\s\in\E(K)}|\s|\eta\dt
                  \frac{\abs{\s}}{\abs{\Ds}}\abs{\phi_{L}-\phi_{K}},
 \end{align}
 with a constant $C>0$ depending only on $\psi$ and $\bar{C}$.
 Applying the results on the convergence of discrete space translates,
 cf.\ \cite[Lemma 4.1]{GHL19}, we can easily see that the right hand
 sides of \eqref{eq:pr_grd_trunc}-\eqref{eq:source_trunc} tends to $0$
 as $m\rightarrow \infty$ under the given assumptions. Consequently,
 the weak consistency of the mass update \eqref{eq:disc_mas_updt}
 follows. 

 In order to prove the weak consistency of the momentum balance, we
 similarly start with a compactly supported smooth vector valued test
 function $\uu{\psi}=(\psi_i)_{i=1}^d\in
 C^\infty_c([0,T)\times\bar{\Omega})^d$. For each $m\in\mbb{N}$ and
 $i=1,\dots,d$, denote by $(\psi^{(m)}_i)^n_{\sigma}$, the average of
 $\psi_i$ on $(t^n,t^{n+1})\times D_{\sigma}$ for
 $n=0,\dots,N^{(m)}-1,\; \sigma\in(\E^{(m)})^{(i)}_{\mathrm{int}}$. We
 multiply the momentum update \eqref{eq:disc_mom_updt} by
 $(\psi^{(m)}_i)_\s^n$ for $i=1,\dots,d$ and take sum over all
 $n=0,\dots,N^{(m)}-1,\;
 \sigma\in(\E^{(m)})^{(i)}_{\mathrm{int}}$. The weak consistency of
 the corresponding discrete convection operator and the pressure 
 gradient can be obtained using analogous results as in
 \cite[Theorem 4.1]{HLN+23}. Whereas, the remainder terms due to the
 stabilisation terms in the momentum flux vanishes as $m\to\infty$
 upon recalling similar arguments as in the case of the mass update.
 
 Finally, the summation over all $\s\in\E^{(i)}_\intr$ and all
 $n\in\{0,1,\dots,N^{(m)-1}\}$, the source term of
 \eqref{eq:disc_mom_updt} yields the expression 
 \begin{equation}
   \label{eq:src_cvrg}
   T^{(m)}_{i,\mathrm{source}}
   =\sum_{n=0}^{N^{(m)}-1}\dt^n\sum_{\s\in\E^{(i)}_\intr}\abs{\Ds}
   \rho_\s^n\psi_\s^n(\partial^{(i)}_\E
   \phi)_\s.
 \end{equation}
 Since, $\phi\in L_{\M^{(m)}}(\Omega)$ is obtained upon taking
 cell-averages of a smooth function, an application of \cite[Lemma
 3.1]{GHL19} and the assumptions on the sequence of discrete solutions yield, 
 \begin{equation}
   T^{(m)}_{i,\mathrm{source}} \to \iint\limits_{Q_T}\rho
   \D_{x_i}{\phi}\psi_i\dd\uu{x}\dd t,\;\mathrm{as}\;m\to\infty,
 \end{equation}
 and accordingly we have the weak consistency of the momentum update 
 \eqref{eq:disc_mom_updt}. 
\end{proof}

\section{Consistency with the Anelastic Limit}
\label{sec:dis_anelastic_limit}

With the aid of the estimates obtained in
Section\,\ref{sec:enstab_wb}, we now proceed to obtain the asymptotic
limit of the semi-implicit scheme. As a first step, we derive the
following uniform estimates with respect to $\veps$ as an immediate
consequence of total energy bound obtained
Theorem\,\ref{thm:dis_totbal}. Consequently, we show that, for a given
discretisation of the domain, the velocity stabilised semi-implicit
scheme \eqref{eq:disc_mas_updt}-\eqref{eq:disc_mom_updt} converges to a
velocity stabilised semi-implicit scheme for the anelastic Euler
equations \eqref{eq:anelastic_mom}-\eqref{eq:anelastic_vel} as
$\veps\to 0$. Note that, in the subsequent analysis, we shall take
advantage of the corresponding discrete function spaces being
finite-dimensional for a fixed mesh. This feature, along with the
uniform bounds, finally enables us to obtain the limit of the stiff
terms as a solution of a discrete elliptic equation.
\begin{proposition}[Global discrete energy inequality]
  \label{prop:dis_energy_uni_bd}
  Let $(\rho^{n+1}, \uu{u}^{n+1})\in
  L_\M(\Omega)\times\uu{H}_{\E,0}(\Omega)$ be a solution of the semi
  implicit scheme \eqref{eq:disc_mas_updt}-\eqref{eq:disc_mom_updt}
  with respect to a well-prepared initial data
  $(\rho^\veps_0,\uu{u}^\veps_0)\in L^\infty(\Omega)^{1+d}$. Then
  $(\rho^{n+1}, \uu{u}^{n+1})$ satisfies the following estimate for
  sufficiently small $\veps>0$.    
  \begin{equation}
    \begin{aligned}
      \label{eq:tot_en_unibd}
      \frac{1}{\veps^2}\sum_{K\in\M}|K|\Pi_\gamma(\rho_{K}^{n+1}|\tilde{\rho}_K)
      &+\sum_{i=1}^d\sum_{\s\in\E^{(i)}_\mathrm{int}}
      \abs{\Ds}\frac{1}{2}\rho_{\Ds}^{n+1}(u_{\s}^{n+1})^2 \\
      &+\frac{\dt^2}{\veps^4}\sum_{k=0}^n
      \sum_{i=1}^d\sum_{\s\in\E_\mathrm{int}^{(i)}}|\Ds|
      \left(\eta-\frac{1}{\rho^{k+1}_{\Ds}}\right)
      \left((\D_\E^{(i)}p^{k+1})_\s+\rho^{k+1}_\sigma
        (\D_\E^{(i)}\phi)_\sigma\right)^2 \leq C,
    \end{aligned}
  \end{equation}
  where $C>0$ is a constant independent of $\veps$.
\end{proposition}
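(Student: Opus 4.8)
The plan is to promote the single-step balance of Theorem~\ref{thm:dis_totbal} to a bound that is global in time and, crucially, uniform in $\veps$. The mechanism is a telescoping sum in which the dissipative term produced in the proof of Theorem~\ref{thm:dis_totbal} is retained, followed by a uniform bound on the \emph{initial} energy that exploits the well-prepared scaling of the data.

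First I would return to the sharper inequality \eqref{eq:dis_totenbal_press} obtained along the way to Theorem~\ref{thm:dis_totbal}, rather than its simplified consequence \eqref{eq:dis_totbal}. Writing $E^n:=\frac{1}{\veps^2}\sum_{K\in\M}|K|\Pi_\gamma(\rho_{K}^{n}|\tilde{\rho}_K)+\sum_{\s\in\E_\mathrm{int}}\frac{1}{2}|\Ds|\rho_{\Ds}^{n}(u_\s^{n})^2$ for the discrete total energy and multiplying \eqref{eq:dis_totenbal_press} by $\dt$, the time-step restriction \eqref{eq:TotEn_tstep} renders the right-hand side non-positive while the condition \eqref{eq:TotEn_eta} on $\eta$ keeps the coefficient $\eta-1/\rho^{n+1}_{\Ds}$ non-negative. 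Hence the single-step estimate reads $E^{n+1}+\mcal{D}^{n+1}\le E^n$, where $\mcal{D}^{n+1}\ge0$ is the dissipation $\frac{\dt^2}{\veps^4}\sum_{i=1}^d\sum_{\s\in\E_\mathrm{int}^{(i)}}|\Ds|(\eta-1/\rho^{n+1}_{\Ds})\big((\D_\E^{(i)}p^{n+1})_\s+\rho^{n+1}_\sigma(\D_\E^{(i)}\phi)_\sigma\big)^2$. Summing over $k=0,\dots,n$ collapses the $E^k$ differences and yields
\begin{equation*}
E^{n+1}+\frac{\dt^2}{\veps^4}\sum_{k=0}^n\sum_{i=1}^d\sum_{\s\in\E_\mathrm{int}^{(i)}}|\Ds|\left(\eta-\frac{1}{\rho^{k+1}_{\Ds}}\right)\left((\D_\E^{(i)}p^{k+1})_\s+\rho^{k+1}_\sigma(\D_\E^{(i)}\phi)_\sigma\right)^2\le E^0,
\end{equation*}
which is exactly \eqref{eq:tot_en_unibd} with $C=E^0$. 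It thus remains only to bound $E^0$ independently of $\veps$.

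For the kinetic part of $E^0$ I would use that $\rho^0$ is bounded on the fixed mesh (from $\rho^\veps_0=\tilde\rho+\veps r^\veps_0$ with $r^\veps_0$ uniformly bounded) together with the uniform bound on $\{(\uu{u}^\veps)^0\}_{\veps>0}$ in $\uu{H}_{\E,0}(\Omega)$ from Lemma~\ref{lem:wp_est}(ii); this bounds $\sum_{\s}\frac12|\Ds|\rho_{\Ds}^{0}(u_\s^{0})^2$ independently of $\veps$. The delicate piece is the relative internal energy, where the prefactor $1/\veps^2$ must be absorbed. Since $\Pi_\gamma(\rho|\tilde\rho)=h_\gamma(\rho)-h_\gamma(\tilde\rho)-h_\gamma'(\tilde\rho)(\rho-\tilde\rho)$ has both value and first derivative vanishing at $\rho=\tilde\rho$, Taylor's theorem gives $\Pi_\gamma(\rho^0_K|\tilde\rho_K)=\frac12 h_\gamma''(\xi_K)(\rho^0_K-\tilde\rho_K)^2$ for some $\xi_K\in\llbracket\rho^0_K,\tilde\rho_K\rrbracket$. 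By Lemma~\ref{lem:wp_est}(i) we have $|\rho^0_K-\tilde\rho_K|\le\veps M(\veps)$ with $M(\veps)\to0$, so for all sufficiently small $\veps$ the values $\rho^0_K$ lie in a fixed compact subinterval of $(0,\infty)$ on which $h_\gamma''\le C_h$ for a constant $C_h$ independent of $\veps$. Combining these facts,
\begin{equation*}
\frac{1}{\veps^2}\sum_{K\in\M}|K|\Pi_\gamma(\rho^0_K|\tilde\rho_K)\le\frac{C_h}{2\veps^2}\sum_{K\in\M}|K|(\rho^0_K-\tilde\rho_K)^2\le\frac{C_h}{2}|\Omega|\,M(\veps)^2,
\end{equation*}
which is bounded uniformly for small $\veps$ because $M(\veps)\to0$. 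Adding the two contributions bounds $E^0$, hence $C$, independently of $\veps$.

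The only genuinely delicate step is this last estimate: the $1/\veps^2$ singularity is offset exactly by the quadratic vanishing of $\Pi_\gamma$ at the hydrostatic density combined with the $O(\veps)$ smallness of the initial density perturbation furnished by Lemma~\ref{lem:wp_est}(i). It is precisely this cancellation---not the telescoping, which is routine once the dissipation term is retained---that forces the qualifier ``for sufficiently small $\veps$'' in the statement.
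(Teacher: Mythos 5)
Your proof is correct and follows essentially the same route as the paper: multiply the intermediate inequality \eqref{eq:dis_totenbal_press} by $\dt$, telescope over the time steps while retaining the dissipation term, and bound the resulting right-hand side (the initial discrete energy) uniformly in $\veps$ via Lemma~\ref{lem:wp_est}. The only difference is that you spell out the Taylor-expansion argument showing how the quadratic vanishing of $\Pi_\gamma$ at $\tilde\rho$ combines with the $O(\veps)$ smallness of the initial density perturbation to absorb the $1/\veps^2$ factor, a step the paper compresses into the assertion that the bound follows from Lemma~\ref{lem:wp_est}.
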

\begin{proof}
  Upon multiplying \eqref{eq:dis_totenbal_press} by $\dt$ and taking
  sum over time-steps from $0$ to $n-1$ we obtain that, any solution
  $(\rho^{n+1},\uu{u}^{n+1})\in L_\M(\Omega)\times\uu{H}_{\E,0}(\Omega)$
  of \eqref{eq:disc_mas_updt}-\eqref{eq:disc_mom_updt} satisfies
  \begin{equation}
    \begin{aligned}
      \label{eq:tot_en_unibd_ini}
      &\frac{1}{\veps^2}\sum_{K\in\M}|K|\Pi_\gamma(\rho_{K}^{n+1}|\tilde{\rho}_K)
      +\sum_{i=1}^d\sum_{\s\in\E^{(i)}_\mathrm{int}}
      \abs{\Ds}\frac{1}{2}\rho_{\Ds}^{n+1}(u_{\s}^{n+1})^2 \\
      &+\frac{\dt^2}{\veps^4}\sum_{k=0}^n\sum_{i=1}^d
      \sum_{\s\in\E_\mathrm{int}^{(i)}}|\Ds|
      \left(\eta-\frac{1}{\rho^{k+1}_{\Ds}}\right)\left((\D_\E^{(i)}p^{k+1})_\s
        +\rho^{k+1}_\sigma (\D_\E^{(i)}\phi)_\sigma\right)^2 \\
      &\leq 
      \frac{1}{\veps^2}\sum_{K\in\M}|K|\Pi_\gamma(\rho_{K}^0|\tilde{\rho}_K)
      +\sum_{\s\in\E_\mathrm{int}}\abs{\Ds}\frac{1}{2}\rho_{\Ds}^0(u_{\s}^0)^2.
    \end{aligned}
  \end{equation}
  The uniform bound of the right hand side of
  \eqref{eq:tot_en_unibd_ini} with respect to $\veps$ follows from
  Lemma\,\ref{lem:wp_est}. 
\end{proof}
Carrying out a treatment analogous to the one performed for the
continuous case in Section\,\ref{sec:ewg_cont}, we obtain the following
estimates for the scheme
\eqref{eq:disc_mas_updt}-\eqref{eq:disc_mom_updt} as $\veps\to 0$. 
\begin{lemma}
  \label{lem:dis_var_conv_bd}
  Let $\mcal{T}=(\mcal{M},\mcal{E})$ be a fixed primal-dual mesh pair
  that gives a MAC grid discretisation of $\bar{\Omega}$ and let $\delta
  t>0$ be such that $\{0=t^0<t^1<\dots<t^N=T\}$ is a discretisation of
  the time domain $[0,T]$ with  $\dt =  t^{n}-t^{n-1}$ for $1\leq
  n\leq N$. For each $\veps>0$, let us denote by
  $(\rho^\veps,\uu{u}^\veps)\in
  L^\infty(0,T;L_{\mcal{M}}(\Omega)\times\uu{H}_{\mcal{E},0}(\Omega))$
  the approximate solutions to the semi-implicit scheme
  \eqref{eq:disc_mas_updt}-\eqref{eq:disc_mom_updt} with respect to
  a well-prepared initial data $(\rho^\veps_0, \uu{u}^\veps_0)\in
  L^\infty(\Omega)^{1+d}$. Then we have the following.
  \begin{enumerate}[label=(\roman*)]
  \item $\rho^\veps\rightarrow \tilde{\rho}$ as $\veps\rightarrow 0$ in
    $L^\infty(0,T;L^r(\Omega))$, for any $r\in(1,\min\{2,\gamma\}]$,
    where $\tilde{\rho}=\sum_{K\in\M}\tilde{\rho}_K\mcal{X}_K\in
    L_\M(\Omega)$ denotes the discrete approximation of the hydrostatic
    density.
  \item The sequence of approximate solutions for the velocity
    component $\{\uu{u}^\veps\}_{\veps>0}\subset\uu{H}_{\E,0}(\Omega)$
    is uniformly bounded with respect to $\veps$, i.e.\ for all $\veps>0$
    sufficiently small, we have
    \begin{equation}
      \label{eq:dis_vel_uni_ub}
      \norm{\uu{u}^\veps}_{L^\infty(0,T;L^2(\Omega)^d)}\leq C,
    \end{equation}
    where $C>0$ is a constant independent of $\veps$.
  \end{enumerate}
\end{lemma}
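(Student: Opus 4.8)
The plan is to read off both assertions from the global discrete energy inequality of Proposition~\ref{prop:dis_energy_uni_bd}, which provides, uniformly in $\veps$ and in the time index $n$, the two estimates
\begin{equation*}
  \frac{1}{\veps^2}\sum_{K\in\M}\volk\,\Pi_\gamma(\rho_K^{n+1}\vert\tilde\rho_K)\le C
  \qquad\text{and}\qquad
  \sum_{i=1}^d\sum_{\s\in\E^{(i)}_\intr}\volds\,\tfrac12\,\rho_{\Ds}^{n+1}(u_\s^{n+1})^2\le C .
\end{equation*}
The first says that the total relative internal energy is $O(\veps^2)$ and drives (i); the second controls the kinetic energy and yields (ii) once a uniform positive lower bound on the density is secured.

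For (i) I would use the coercivity of the Bregman functional $\Pi_\gamma(\cdot\vert\tilde\rho)$ through the usual essential/residual splitting. At a fixed time level, partition $\M$ into $\M_\mathrm{ess}=\{K:\tfrac12\tilde\rho_K\le\rho_K^{n+1}\le2\tilde\rho_K\}$ and its complement $\M_\mathrm{res}$. Because the discrete hydrostatic values $\tilde\rho_K$ range over finitely many strictly positive numbers, a Taylor expansion of $h_\gamma$ gives $\Pi_\gamma(\rho\vert\tilde\rho_K)\ge c\,\abs{\rho-\tilde\rho_K}^2$ on the essential range, while the relation \eqref{eq:psi_gm_prop} together with the $\gamma$-growth of $h_\gamma$ gives $\Pi_\gamma(\rho\vert\tilde\rho_K)\ge c\,(1+\rho^\gamma)$ on the residual range, with $c>0$ independent of $\veps$. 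Hence $\sum_{K\in\M_\mathrm{ess}}\volk\abs{\rho_K^{n+1}-\tilde\rho_K}^2\le C\veps^2$, whereas $\sum_{K\in\M_\mathrm{res}}\volk\to0$ and $\sum_{K\in\M_\mathrm{res}}\volk(\rho_K^{n+1})^\gamma\le C\veps^2$. Applying H\"older's inequality on each piece over the bounded domain $\Omega$---with exponent $2$ on the essential part and $\gamma$ on the residual part---bounds $\norm{\rho^{n+1}-\tilde\rho}_{L^r(\Omega)}$ by a quantity tending to $0$ as $\veps\to0$ for every $r\in(1,\min\{2,\gamma\}]$; since every bound is uniform in $n$, this is exactly the claimed convergence in $L^\infty(0,T;L^r(\Omega))$.

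For (ii) the decisive point is that the kinetic energy controls only $\rho_{\Ds}^{n+1}(u_\s^{n+1})^2$, so the density must first be divided out. On the fixed mesh the bound above sharpens to a per-cell estimate $\Pi_\gamma(\rho_K^{n+1}\vert\tilde\rho_K)\le C\veps^2/\min_K\volk$, and the strict convexity and coercivity of $\Pi_\gamma(\cdot\vert\tilde\rho_K)$ force $\max_{K,n}\abs{\rho_K^{n+1}-\tilde\rho_K}\to0$. Through the dual averaging \eqref{eq:mass_dual} this transfers to the dual densities, so $\rho_{\Ds}^{n+1}\to\tilde\rho_{\Ds}\ge\tfrac12\min_K\tilde\rho_K=:c_0>0$; consequently $\rho_{\Ds}^{n+1}\ge c_0$ uniformly in $\s$ and $n$ for all sufficiently small $\veps$. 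Dividing the kinetic energy bound by $c_0$ then gives $\sum_\s\volds(u_\s^{n+1})^2\le 2C/c_0$ uniformly in $n$, which is precisely \eqref{eq:dis_vel_uni_ub}.

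I expect the structural coupling between the two parts to be the main obstacle: the velocity bound cannot be extracted from the kinetic-energy term in isolation but hinges on the strictly positive density floor, which is itself a by-product of the $O(\veps^2)$ smallness of the relative internal energy established in (i). Guaranteeing that this floor is uniform in the time index---so that it persists after passing to the $L^\infty$-in-time norms---is the delicate step, whereas the remaining manipulations are routine uses of H\"older's inequality and the convexity of $h_\gamma$.
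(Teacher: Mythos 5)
Your proposal is correct and follows essentially the same route as the paper: both deduce from the global energy inequality of Proposition~\ref{prop:dis_energy_uni_bd} that the relative internal energy is $O(\veps^2)$, obtain (i) via coercivity of $\Pi_\gamma$ through an essential/residual splitting (the paper case-splits $\gamma>2$ versus $\gamma\in(1,2)$ and cites \cite{AGK23} for the details, whereas you run a unified per-cell splitting and H\"older argument, which is the same mechanism), and then get (ii) by using the fixed finite mesh and finite number of time levels to upgrade the per-cell $\Pi_\gamma$ bounds to a uniform density floor, which is divided out of the kinetic-energy term. Your explicit transfer of the floor to the dual densities via \eqref{eq:mass_dual} is left implicit in the paper but is exactly what its estimate \eqref{eq:dis_rho_bd} is used for.
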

\begin{proof}
  Let $\underline{\rho}>0,\;\bar\rho>0$ be such that
  \begin{equation}
    \label{eq:tilderho_up_lo}
    \underline{\rho}\coloneqq\inf_{x\in\Omega}\tilde{\rho}(x),\;\bar\rho
    \coloneqq\sup_{x\in\Omega}\tilde{\rho}(x).
  \end{equation}
  From \eqref{eq:tot_en_unibd} and the fact that $\Pi_\gamma(a|b)>0$ for
  any $a,b\in\mbb{R}$, we obtain that a discrete solution $\rho^\veps\in
  L_\M(\Omega)$ of \eqref{eq:disc_mas_updt}-\eqref{eq:disc_mom_updt}
  satisfies
  \begin{equation}
    \label{eq:dis_pigam_est}
    \abs{K}\Pi_\gamma\big((\rho^\veps)^{n+1}_K|\tilde{\rho}_K\big)
    \leq C\veps^2,\;\forall K\in\M,\; n=0,\dots, N-1,
  \end{equation}
  where $C>0$ is a constant independent of $\veps$. Analogously as in
  the the proof of \cite[Lemma 6.2]{AGK23}, for any $\gamma>2$, we get
  \begin{equation}
    \label{eq:dis_rho_est_gamma_big}
    \abs{K}\absq{(\rho^\veps)^{n+1}_K-\tilde{\rho}_K}\leq C\veps^2,
    \;\forall K\in\M, \; n = 0,\dots,N-1,
  \end{equation}
  and for any $\gamma\in(1,2)$, and each $n\in\{0,1,\dots,N-1\}$, we
  have 
  \begin{align}
    \abs{K}\absq{(\rho^\veps)^{n+1}_K-\tilde{\rho}_K}
    &\leq C\veps^2,\;\forall K\in\M^\veps_\mathrm{ess},
      \label{eq:dis_rho_est_gamma_ess} \\
    \abs{K}\abs{(\rho^\veps)^{n+1}_K-\tilde{\rho}_K}^\gamma
    &\leq C\veps^2,\;\forall K\in\M^\veps_\mathrm{res}.
      \label{eq:dis_rho_est_gamma_res}
  \end{align}
  Here, the essential and residual parts are defined by
  $\M^\veps_\mathrm{ess}:=\{K\in\M\colon (\rho^\veps)^{n+1}_K<
  2\bar{\rho}\}$ and $\M^\veps_\mathrm{res}:=\{K\in\M\colon
  (\rho^\veps)^{n+1}_K\geq 2\bar{\rho}\}$. In all the estimates 
  \eqref{eq:dis_rho_est_gamma_big}-\eqref{eq:dis_rho_est_gamma_res} 
  above, $C>0$ denotes a constant independent of $\veps$ which in
  turn yields the convergence of $\rho^\veps$ as stated in (i). From
  the estimates
  \eqref{eq:dis_rho_est_gamma_big}-\eqref{eq:dis_rho_est_gamma_res}
  we further obtain that for a fixed space-time discretisation,
  $(\rho^\veps)^n_K\to \tilde{\rho}_K$ as $\veps\to 0$ for each
  $K\in\M$ and $1\leq n\leq N$. Subsequently, for $\veps>0$,
  sufficiently small, we have  
  \begin{equation}
    \label{eq:dis_rho_bd}
    0<\underline{\rho}\leq (\rho^\veps)^n_K\leq \bar{\rho},\;
    \forall K\in\M,\; 1\leq n\leq N.
  \end{equation}
  The estimate \eqref{eq:dis_vel_uni_ub} in (ii) readily follows from
  the kinetic energy component of the total energy estimate
  \eqref{eq:tot_en_unibd} and the uniform lower bound of
  $\{\rho^\veps\}_{\veps>0}$ obtained in \eqref{eq:dis_rho_bd} for
  sufficiently small values of $\veps$.
\end{proof}
In order to study the convergence of the stiff terms as the
Mach/Froude numbers vanish, we first obtain the following 
discrete $L^2$-control that comes as a consequence of the uniform
bound obtained in Proposition\,\ref{prop:dis_energy_uni_bd}.  
\begin{lemma}
  \label{lem:stiff_bd}
  Let  $\mcal{T}=(\M,\E)$ and $\dt>0$ be as in
  Lemma\,\ref{lem:dis_var_conv_bd}. Let $\veps>0$ be fixed and let
  $(\rho^n,\uu{u}^n)_{1\leq n\leq N}\subset
  L_\M(\Omega)\times\uu{H}_{\E,0}(\Omega)$ be a solution of the scheme
  \eqref{eq:disc_mas_updt}-\eqref{eq:disc_mom_updt} with respect to a
  well-prepared initial data $(\rho^\veps_0, \uu{u}^\veps_0)\in
  L^\infty(\Omega)^{1+d}$. Then there exists a constant $C>0$,
  independent of $\veps$, such that for each $1\leq n\leq N$, 
  \begin{equation}
    \label{eq:stiff_term_ub}
    \frac{1}{\veps^4}\sum_{i=1}^d\sum_{\sigma\in\E^{(i)}_\mathrm{int}}\abs{D_\sigma}
    \left((\D_\E^{(i)}p^n)_\sigma+\rho^n_\sigma(\D_{\E}^{(i)}\phi)_\sigma\right)^2<
    C,
  \end{equation}
  which further yields,
  \begin{equation}
    \label{eq:grad_L2_bd}
    \frac{1}{\veps^4}\sum_{i=1}^d\sum_{\sigma\in\E^{(i)}_\mathrm{int}}\abs{D_\sigma}
    \left((\D_\E^{(i)}(h_\gamma^\prime(\rho^n)+\phi))_\sigma\right)^2< C.
  \end{equation}
\end{lemma}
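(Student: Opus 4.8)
The plan is to read off both estimates directly from the global energy inequality \eqref{eq:tot_en_unibd} of Proposition~\ref{prop:dis_energy_uni_bd}. The third term there is a sum over the time levels $k=0,\dots,n$ of exactly the quantity we wish to control, and each summand is non-negative once the condition \eqref{eq:TotEn_eta} on $\eta$ is in force. Hence I would first apply \eqref{eq:tot_en_unibd} with its index taken to be $N-1$, so that the entire sum $\sum_{k=0}^{N-1}$ is bounded by a constant $C$ that is independent of $\veps$; discarding all but the single summand corresponding to time level $n$ (i.e.\ $k=n-1$) then yields
\begin{equation*}
  \frac{\dt^2}{\veps^4}\sum_{i=1}^d\sum_{\s\in\E^{(i)}_\mathrm{int}}\abs{\Ds}
  \Big(\eta-\tfrac{1}{\rho^{n}_{\Ds}}\Big)
  \Big((\D_\E^{(i)}p^{n})_\s+\rho^{n}_\s(\D_\E^{(i)}\phi)_\s\Big)^2\leq C,
  \qquad 1\leq n\leq N.
\end{equation*}

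The crux is then to convert the weight $\eta-1/\rho^n_{\Ds}$ into a factor bounded below by a strictly positive constant independent of $\veps$. With the explicit per-interface choice $\eta=\eta_1/\rho^{n-1}_{\Ds}$, $\eta_1>3/2$, together with the one-step ratio bound \eqref{eq:rho_3/2}, I would estimate
\begin{equation*}
  \eta-\frac{1}{\rho^{n}_{\Ds}}
  =\frac{\eta_1}{\rho^{n-1}_{\Ds}}-\frac{1}{\rho^{n}_{\Ds}}
  \geq\frac{\eta_1}{\rho^{n-1}_{\Ds}}-\frac{3}{2}\frac{1}{\rho^{n-1}_{\Ds}}
  =\frac{\eta_1-3/2}{\rho^{n-1}_{\Ds}}.
\end{equation*}
Invoking the $\veps$-uniform density bounds \eqref{eq:dis_rho_bd} of Lemma~\ref{lem:dis_var_conv_bd}, valid for all sufficiently small $\veps$, gives $\rho^{n-1}_{\Ds}\leq\bar{\rho}$ and hence $\eta-1/\rho^n_{\Ds}\geq(\eta_1-3/2)/\bar{\rho}=:c>0$ with $c$ independent of $\veps$. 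Dividing the displayed inequality by $c\,\dt^2$—legitimate because the mesh and $\dt$ are fixed, so $\dt$ contributes only a $\veps$-independent factor—produces \eqref{eq:stiff_term_ub}.

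For the second estimate \eqref{eq:grad_L2_bd}, the lever is the discrete product rule \eqref{eq:dis_pr_grd}: applied to $q=\rho^n$ with $p^n=(\rho^n)^\gamma$ it gives $(\D_\E^{(i)}p^n)_\s=\rho^n_\s(\D_\E^{(i)}h_\gamma^\prime(\rho^n))_\s$, so the stiff residual factors as
\begin{equation*}
  (\D_\E^{(i)}p^n)_\s+\rho^n_\s(\D_\E^{(i)}\phi)_\s
  =\rho^n_\s\,(\D_\E^{(i)}(h_\gamma^\prime(\rho^n)+\phi))_\s.
\end{equation*}
Since the interface value $\rho^n_\s=\rho^n_{KL}\in\llbracket\rho^n_K,\rho^n_L\rrbracket$ inherits the lower bound $\rho^n_\s\geq\underline{\rho}$ from \eqref{eq:dis_rho_bd}, dividing by $(\rho^n_\s)^2\geq\underline{\rho}^2$ and summing converts \eqref{eq:stiff_term_ub} into \eqref{eq:grad_L2_bd}, at the cost of replacing $C$ by $C/\underline{\rho}^2$.

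I expect the only genuinely delicate point to be the uniform positivity of the weight $\eta-1/\rho^n_{\Ds}$: both $\eta$ and the densities depend on $\veps$, so the argument hinges on combining the one-step ratio control \eqref{eq:rho_3/2} with the $\veps$-uniform density bounds of Lemma~\ref{lem:dis_var_conv_bd}, rather than on any single crude estimate. Everything else is bookkeeping with fixed mesh constants. I would also record that all the estimates hold only for $\veps$ below the threshold beyond which \eqref{eq:dis_rho_bd} becomes available, which is harmless for the intended limit $\veps\to0$.
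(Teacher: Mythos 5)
Your proposal is correct and follows essentially the same route as the paper's own (much terser) proof: extract the single-time-level bound from the global energy inequality \eqref{eq:tot_en_unibd}, use the choice $\eta=\eta_1/\rho^{n-1}_{D_\sigma}$ with $\eta_1>3/2$ together with \eqref{eq:rho_3/2} and the uniform density bounds \eqref{eq:dis_rho_bd} to make the weight $\eta-1/\rho^n_{D_\sigma}$ uniformly positive, then pass to \eqref{eq:grad_L2_bd} via the factorisation \eqref{eq:dis_pr_grd} and the lower bound $\rho^n_\sigma\geq\underline{\rho}$. Your write-up merely spells out the steps the paper leaves implicit (including the harmless absorption of $\delta t^2$ and mesh constants into $C$), so there is nothing to correct.
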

\begin{proof}
  From the energy estimate \eqref{eq:tot_en_unibd} we note that for
  $\veps>0$, small enough, and for each $1\leq n\leq N$, the following
  holds: 
  \begin{equation}
    \label{eq:dis_pressure_source_est}
    \frac{1}{\veps^4}\sum_{i=1}^d\sum_{\s\in\E_\mathrm{int}^{(i)}}
    \abs{\Ds}\left(\eta-\frac{1}{\rho^{n}_{\Ds}}\right)
    \left((\D_\E^{(i)}p^{n})_\s
      +\rho^{n}_\sigma (\D_\E^{(i)}\phi)_\sigma\right)^2 \leq C,
  \end{equation}
  where the constant $C>0$ is independent of $\veps$, but depends on
  $\underline{\rho}, \bar{\rho}$, the domain $Q_T$, and the fixed
  discretisation $(\dt, \mcal{T})$. The choice of the constant $\eta$ in
  accordance with the inequality \eqref{eq:rho_3/2}, imposed by the
  time-step restriction, yields \eqref{eq:stiff_term_ub}. An
  application of the identity \eqref{eq:dis_pr_grd} related to the
  choice $\rho_\s$ and the uniform bounds \eqref{eq:dis_rho_bd} on
  $\rho^\veps$ yield \eqref{eq:grad_L2_bd} from
  \eqref{eq:stiff_term_ub}.    
\end{proof}
We have the following corollary on the limit of the stiff terms as an
immediate consequence of the uniform estimate
\eqref{eq:stiff_term_ub}. 
\begin{corollary}
  \label{cor:stiff_wk_lim}
  For each $n=1,\dots,N$, there exists
  $\uu{\theta}^{n}\in\uu{H}_{\E,0}(\Omega)$ such that
  $\{\frac{1}{\veps^2}(\grd_{\E}p^n+\rho^n\grd_{\E}\phi)\}_{\veps>0}$
  weakly converges to $\uu{\theta}^{n}$ in $L^2(\Omega)^d$.
\end{corollary}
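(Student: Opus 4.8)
The plan is to reinterpret the uniform estimate \eqref{eq:stiff_term_ub} as a uniform $L^2$-bound on the sequence under consideration and then to exploit the finite-dimensionality of the discrete spaces for the fixed mesh.

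First I would recall that any $\uu{v}\in\uu{H}_{\E,0}(\Omega)$ is piecewise constant on the dual cells $\Ds$, so that its norm satisfies $\norm{\uu{v}}_{L^2(\Omega)^d}^2=\sum_{i=1}^d\sum_{\s\in\E^{(i)}_\mathrm{int}}\abs{\Ds}v_\s^2$. Since the discrete gradient maps $L_\M(\Omega)$ into $\uu{H}_{\E,0}(\Omega)$, the quantity $\uu{v}^\veps:=\frac{1}{\veps^2}(\grd_\E p^n+\rho^n\grd_\E\phi)$ belongs to $\uu{H}_{\E,0}(\Omega)$, and applying the above identity shows that \eqref{eq:stiff_term_ub} is exactly the statement $\norm{\uu{v}^\veps}_{L^2(\Omega)^d}^2<C$, with $C$ independent of $\veps$. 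Hence, for each fixed $n$, the family $\{\uu{v}^\veps\}_{\veps>0}$ is bounded in $\uu{H}_{\E,0}(\Omega)$.

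Next, because the mesh $\mcal{T}=(\M,\E)$ is held fixed, the space $\uu{H}_{\E,0}(\Omega)$ is finite-dimensional, so a bounded family is relatively compact and the notions of weak and strong convergence coincide. Extracting a sequence $\veps_k\to0$, sequential compactness then produces a limit $\uu{\theta}^n$ lying in the closed subspace $\uu{H}_{\E,0}(\Omega)$ to which the extracted values converge; in finite dimensions this is in particular the asserted weak convergence in $L^2(\Omega)^d$.

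The argument is a short compactness statement, so there is no substantial obstacle. The only point deserving care is to state clearly that the convergence holds after passing to a subsequence—uniform boundedness alone cannot yield convergence of the whole family as $\veps\to0$—and to observe that it is precisely the finite-dimensionality of the fixed-mesh space that upgrades the abstract weak compactness into genuine convergence of the dual grid function inside the closed space $\uu{H}_{\E,0}(\Omega)$.
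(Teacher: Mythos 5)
Your proposal is correct and takes essentially the same approach as the paper: both read the estimate \eqref{eq:stiff_term_ub} of Lemma~\ref{lem:stiff_bd} as a uniform $L^2(\Omega)^d$-bound on the dual grid functions $\frac{1}{\veps^2}(\grd_{\E}p^n+\rho^n\grd_{\E}\phi)\in\uu{H}_{\E,0}(\Omega)$ and then exploit the finite-dimensionality of $\uu{H}_{\E,0}(\Omega)$ for the fixed mesh. The only cosmetic difference is that the paper first invokes weak compactness in $L^2(\Omega)^d$ and uses finite-dimensionality merely to conclude that the weak limit lies in the (weakly closed) subspace $\uu{H}_{\E,0}(\Omega)$, whereas you use finite-dimensional compactness directly to obtain strong (hence weak) convergence of a subsequence; your explicit observation that the convergence is a priori only subsequential is a point the paper itself leaves implicit.
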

\begin{proof}
  From Lemma\,\ref{lem:stiff_bd} we note that
  $\{\frac{1}{\veps^2}(\grd_{\E}p^n+\rho^n\grd_{\E}\phi)\}_{\veps>0}\subset 
  \uu{H}_{\E,0}(\Omega)$ is uniformly bounded in $L^2(\Omega)^d$ for
  each $n=1,\dots,N$ and hence admits a weak limit $\uu{\theta}^n\in
  L^2(\Omega)^d$ as $\veps\to0$. Since $\uu{H}_{\E,0}(\Omega)$
  is weakly closed in $L^2(\Omega)^d$ as it is a finite dimensional
  subspace of $L^2(\Omega)^d$, we conclude that
  $\uu{\theta}^n\in \uu{H}_{\E,0}(\Omega)$ for each $n=1,\dots,N$.
\end{proof}
The following lemma provides a description of the weak limit
$\uu{\theta}^n$ of the stiff terms; cf.\
Corollary\,\ref{cor:cont_stiff_lim} for the analogous result in the
continuous case.
\begin{lemma}
  \label{lem:stiff_lim}
  For each $n=1,\dots,N$, there exists $\pi^n\in L_{\M}(\Omega)$ such
  that
  \begin{equation}
    \label{eq:tilde_rho_grad}
    \theta^n_{\sigma}=\tilde\rho_{\sigma}(\D_{\E}^{(i)}\pi^n)_{\sigma},
    \;\sigma\in\E^{(i)}_{\mathrm{int}},\;
    i=1,2,\dots,d.
  \end{equation}
\end{lemma}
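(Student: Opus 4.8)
The plan is to read the statement as a discrete weighted Hodge decomposition, mirroring the continuous Corollary~\ref{cor:cont_stiff_lim}. On $\Hez$ put the inner product $\inpd{\uu w}{\uu v}:=\sum_{i=1}^d\sum_{\s\in\E^{(i)}_\intr}\abs{D_\sigma}w_\sigma v_\sigma$ and on $\Lm$ the one weighted by $\abs{K}$, and set $W:=\{\uu v\in\Hez:\dive_\M(\tilde\rho\uu v)=0\}$. Taking $q=\tilde\rho$ in the weighted duality \eqref{eq:weighted_disc_dual} exhibits the weighted gradient $G\colon\Lm\to\Hez$, $(G\pi)_\s:=\tilde\rho_\s(\D^{(i)}_\E\pi)_\s$, as having adjoint $\uu v\mapsto-\dive_\M(\tilde\rho\uu v)$, whose kernel is exactly $W$; hence $\mathrm{Range}(G)=W^{\perp}$. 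Thus, to produce $\pi^n$ with $\theta^n_\sigma=\tilde\rho_\sigma(\D^{(i)}_\E\pi^n)_\sigma$ it is enough to show $\uu\theta^n\in W^{\perp}$, that is, $\inpd{\uu\theta^n}{\uu v}=0$ for every $\uu v\in W$.

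Fix such a $\uu v$. Since $\Hez$ is finite-dimensional and the mesh is fixed, the weak limit of Corollary~\ref{cor:stiff_wk_lim} is the componentwise limit, so $\inpd{\uu\theta^n}{\uu v}=\lim_{\veps\to0}\veps^{-2}S^\veps$, where $S^\veps:=\sum_{i=1}^d\sum_{\s\in\E^{(i)}_\intr}\abs{D_\sigma}\big[(\D^{(i)}_\E p^n)_\s+\rho^n_\s(\D^{(i)}_\E\phi)_\s\big]v_\s$. Applying the $\gamma$-mean identity \eqref{eq:dis_pr_grd} to $(\D^{(i)}_\E p^n)_\s=(\D^{(i)}_\E(\rho^n)^\gamma)_\s=\rho^n_\s(\D^{(i)}_\E h^\prime_\gamma(\rho^n))_\s$ and the discrete hydrostatic relation \eqref{eq:dis_grad_psigm} in the form $(\D^{(i)}_\E\phi)_\s=-(\D^{(i)}_\E h^\prime_\gamma(\tilde\rho))_\s$, the bracket collapses to $\rho^n_\s(\D^{(i)}_\E g)_\s$ with $g:=h^\prime_\gamma(\rho^n)-h^\prime_\gamma(\tilde\rho)\in\Lm$. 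A second use of \eqref{eq:weighted_disc_dual}, now with $p=g$ and $q=\rho^n$, converts the edge sum into the cell sum $S^\veps=-\sum_{K\in\M}\abs{K}\,g_K\,(\dive_\M(\rho^n\uu v))_K$.

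The conclusion then rests on two quantitative inputs. First, estimate \eqref{eq:grad_L2_bd} of Lemma~\ref{lem:stiff_bd} gives $|(\D^{(i)}_\E g)_\s|\le C\veps^2$ on the fixed mesh; as the mesh graph is connected and finite, $g$ is spatially constant up to $O(\veps^2)$, i.e.\ $g_K=\bar g+\tilde g_K$ with $\bar g$ independent of $K$ and $\max_K|\tilde g_K|\le C\veps^2$. Second, since $\dive_\M(\tilde\rho\uu v)=0$, we have $(\dive_\M(\rho^n\uu v))_K=\frac{1}{\abs{K}}\sum_{\s\in\E(K)}\abs{\s}(\rho^n_\s-\tilde\rho_\s)v_{\s,K}$, which tends to $0$ because $\rho^n_\s\to\tilde\rho_\s$ by Lemma~\ref{lem:dis_var_conv_bd}. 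The constant part $\bar g$ is annihilated by the global conservation identity $\sum_{K\in\M}\abs{K}(\dive_\M(\rho^n\uu v))_K=0$ (the case $p\equiv1$ of \eqref{eq:weighted_disc_dual}), whence $|S^\veps|\le\max_K|\tilde g_K|\sum_{K\in\M}\abs{K}|(\dive_\M(\rho^n\uu v))_K|\le C\veps^2\max_{\s}|\rho^n_\s-\tilde\rho_\s|$. Therefore $\veps^{-2}S^\veps\to0$, giving $\inpd{\uu\theta^n}{\uu v}=0$ and, with the first paragraph, the existence of $\pi^n$.

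I expect the genuine difficulty to be precisely the gain of the extra power of $\veps$. The crude termwise bound only delivers $\veps^{-2}S^\veps=O(1)$, in line with the fact that $\veps^{-2}\Pi_\gamma(\rho^n_K|\tilde\rho_K)$ is merely bounded on a fixed mesh by Proposition~\ref{prop:dis_energy_uni_bd} and \emph{not} known to vanish. The decisive move is not to estimate $S^\veps$ entry by entry but to split off the spatially constant part $\bar g$ of $g$ — which is legitimate only because \eqref{eq:grad_L2_bd} sharpens the edge-differences of $g$ from $O(\veps)$ to $O(\veps^2)$ — and then to discard $\bar g$ against the divergence using the constraint $\dive_\M(\tilde\rho\uu v)=0$ together with global conservation. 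This is the exact discrete counterpart of the continuous mechanism in Corollary~\ref{cor:cont_stiff_lim}, where the linear term $\wp^\prime(\tilde\rho)(\rho^\veps-\tilde\rho)$ enters only through $\dive(\tilde\rho\uu\psi)$ and hence drops out.
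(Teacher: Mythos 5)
Your proposal is correct, and it reaches the conclusion by a genuinely different route in both halves of the argument. For the existence of $\pi^n$, the paper is constructive: it solves the discrete Neumann problem \eqref{eq:el_bdary_val}, $\Delta_{\M}\pi^n=\dive_{\M}\big(\tfrac{1}{\tilde\rho}\uu{\theta}^n\big)$ (verifying the compatibility condition and citing solvability up to an additive constant), then sets $\xi_\sigma=(\D_{\E}^{(i)}\pi^n)_\sigma-\theta^n_\sigma/\tilde\rho_\sigma$, shows $\dive_{\M}\uu{\xi}=0$, and forces $\uu{\xi}=\uu{0}$ by testing the orthogonality relation with $\uu{\xi}/\tilde\rho$ and invoking the duality \eqref{eq:disc_dual}. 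You instead obtain existence abstractly from $\mathrm{Range}(G)=(\ker G^{*})^{\perp}$ in finite dimensions, which is cleaner and avoids the elliptic problem altogether, at the price of being non-constructive. For the key orthogonality claim (the paper's \eqref{eq:wt_div_free_lim}), both proofs run on the same three ingredients — the bound \eqref{eq:grad_L2_bd}, the convergence $\rho^n\to\tilde\rho$ of Lemma \ref{lem:dis_var_conv_bd}, and the weighted duality \eqref{eq:weighted_disc_dual} played against the constraint $\dive_{\M}(\tilde\rho\uu{v})=0$ — but combine them differently: the paper splits the edge weight $\rho^n_\sigma=(\rho^n_\sigma-\tilde\rho_\sigma)+\tilde\rho_\sigma$, annihilates the $\tilde\rho_\sigma$-part exactly by duality, and controls the remainder by Cauchy--Schwarz in the weighted $\ell^2$ norm; you first dualise the whole sum to the cell side, $S^\veps=-\sum_{K}\abs{K}g_K(\dive_{\M}(\rho^n\uu{v}))_K$, and then split $g$ into a spatial constant (killed by global conservation) plus an $O(\veps^2)$ oscillation, using connectivity of the primal adjacency graph to upgrade \eqref{eq:grad_L2_bd} to a pointwise bound. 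Your variant leans on two mild facts on a fixed mesh that deserve a sentence: connectivity of the cell graph (also needed implicitly by the paper for its Neumann problem) and continuity of the $\gamma$-mean of Lemma \ref{lem:rho_sig}, so that $\rho^n_K\to\tilde\rho_K$ implies $\rho^n_\sigma\to\tilde\rho_\sigma$ (the paper's Cauchy--Schwarz step needs the same). Your closing diagnosis is exactly right: termwise estimates only give $\veps^{-2}S^\veps=O(1)$, and the extra smallness must come from discharging the leading part against the anelastic constraint, which is the discrete shadow of Corollary \ref{cor:cont_stiff_lim}.
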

\begin{proof}
  First, we claim that the following identity holds for any
  $\uu{\psi}\in\uu{H}_{\E,0}(\Omega)$ with
  $\dive_{\M}(\tilde\rho\uu{\psi})=0$: 
  \begin{equation}
    \label{eq:wt_div_free_lim}
    \sum_{i=1}^d\sum_{\sigma\in\E^{(i)}_{\mathrm{int}}}
    \abs{D_\sigma}\theta^n_\sigma\psi_\sigma
    = 0.
  \end{equation}
  Note that, for any $\uu{\psi}\in\uu{H}_{\E,0}(\Omega)$ satisfying
  $\dive_{\M}(\tilde\rho\uu{\psi})=0$, we have 
  \begin{equation}
    \label{eq:wt_div_free_lim1}
    \begin{aligned}
      &\frac{1}{\veps^2}\sum_{i=1}^d\sum_{\sigma\in\E^{(i)}_{\mathrm{int}}}
      \abs{D_\sigma}\left((\D^{(i)}_{\E}p^n)_{\sigma}
        +\rho^n_\s (\D^{(i)}_{\E}\phi)_{\sigma}\right)\psi_\sigma\\
      &\quad =\frac{1}{\veps^2}\sum_{i=1}^d\sum_{\sigma\in\E^{(i)}_{\mathrm{int}}}
      \abs{D_\sigma}\rho^n_\sigma
      \left((\D^{(i)}_{\E}(h_\gamma^\prime(\rho^n+\phi)))_{\sigma}\right)\psi_\sigma\\
      &\quad=\frac{1}{\veps^2}\sum_{i=1}^d\sum_{\sigma\in\E^{(i)}_{\mathrm{int}}}
      \abs{D_\sigma}(\rho^n_\sigma-\tilde\rho_\sigma)
      \left((\D^{(i)}_{\E}(h_\gamma^\prime(\rho^n+\phi)))_{\sigma}\right)\psi_\sigma\\
      &\qquad+ \frac{1}{\veps^2}\sum_{i=1}^d\sum_{\sigma\in\E^{(i)}_{\mathrm{int}}}
      \abs{D_\sigma}\tilde\rho_\sigma
      \left((\D^{(i)}_{\E}(h_\gamma^\prime(\rho^n+\phi)))_{\sigma}\right)\psi_\sigma.
    \end{aligned}
  \end{equation}
  Applying the Cauchy-Schwarz inequality, the first term on the
  right hand side of \eqref{eq:wt_div_free_lim1} yields
  \begin{equation}
    \label{eq:wt_div_free_lim2}
    \begin{aligned}
      &\frac{1}{\veps^2}\bigg|\sum_{i=1}^d\sum_{\sigma\in\E^{(i)}_{\mathrm{int}}}
      \abs{D_\sigma}(\rho^n_\sigma-\tilde\rho_\sigma)
      \left((\D^{(i)}_{\E}(h_\gamma^\prime(\rho^n+\phi)))_{\sigma}\right)\psi_\sigma\bigg|\\  
      &\leq C_{\uu{\psi}} \Big(\sum_{i=1}^d\sum_{\sigma\in\E^{(i)}_{\mathrm{int}}}
      \abs{D_\sigma}\absq{\rho^n_\sigma-\tilde\rho_\sigma}\Big)^{\frac{1}{2}}
      \Big(\frac{1}{\veps^4}\sum_{i=1}^d\sum_{\sigma\in\E^{(i)}_{\mathrm{int}}}
      \abs{D_\sigma}\absq{(\D^{(i)}_{\E}
        (h_\gamma^\prime(\rho^n+\phi)))_{\sigma}}\Big)^{\frac{1}{2}},
    \end{aligned}
  \end{equation}
  where $C_{\uu{\psi}}>0$ is a constant that depends only on
  $\uu{\psi}$. The uniform bound \eqref{eq:grad_L2_bd} and the 
  convergence of $\rho^n\in L_{\M}(\Omega)$, detailed in
  Lemma\,\ref{lem:dis_var_conv_bd}, along with a finite dimensional
  argument shows that the right hand side of \eqref{eq:wt_div_free_lim2}
  goes to $0$ as $\veps\to 0$. The second term on the right hand side
  of \eqref{eq:wt_div_free_lim1} vanishes after using the div-grad
  duality \eqref{eq:weighted_disc_dual} and noting that
  $\dive_{\M}(\tilde\rho\uu{\psi})=0$. Therefore, the claim
  \eqref{eq:wt_div_free_lim} is proved upon passing to the limit
  $\veps\to 0$ in \eqref{eq:wt_div_free_lim2}, using
  Corollary\,\ref{cor:stiff_wk_lim}. 
  
  Next, we introduce a function $\pi^n\in L_{\M}(\Omega)$, for each
  $n=1,\dots,N$, via the solution of the discrete Neumann elliptic
  boundary value problem  
  \begin{equation}
    \label{eq:el_bdary_val}
    \begin{aligned}
      \Delta_{\M}\pi^n&=\dive_{\M}\big(\frac{1}{\tilde\rho}\uu{\theta}^n\big), \\
      (\grd_{\E}\pi^n)_{\sigma}&=0,\;\forall\sigma\in\E^{(i)}_{\mathrm{ext}},\;i=1,\dots,d. 
    \end{aligned}
  \end{equation}
  The above problem gives rise to the following $\#\M\times \#\M$
  system of linear equations for the unknowns $(\pi^n_K)_{K\in\M}$: 
  \begin{equation}
    \label{eq:lin_prob_pi}
    \sum_{\substack{\sigma\in\E(K)\\\sigma=K\vert
        L}}\frac{\absq{\sigma}}{\abs{D_\sigma}}(\pi^n_L-\pi^n_K) =
    \abs{K}\Big(\dive_{\M}\big(\frac{1}{\tilde\rho}\uu{\theta}^n\big)\Big)_K,\;\forall
    K\in\M. 
  \end{equation}
  Note that the right hand side of \eqref{eq:lin_prob_pi} satisfies the
  compatibility condition
  \begin{equation}
    \label{eq:compatibility}
    \sum_{K\in\M}\abs{K}(\dive_{\M}(\frac{1}{\tilde\rho}\uu{\theta}^n))_K=
    \sum_{K\in\M}\abs{K}\sum_{\sigma\in\E(K)}\abs{\sigma}
    \frac{1}{\tilde\rho_\sigma}\uu{\theta}^n_{\sigma,K}=0.
  \end{equation}
  Hence, the system of linear equations \eqref{eq:lin_prob_pi} admits
  a solution $(\pi^n_K)_{K\in\M}\equiv\pi^n\in L_{\M}(\Omega)$, unique
  upto an additive constant; see \cite[Lemma 3.6]{EGH00}. Finally, we
  define $\uu{\xi}=(\xi^{(i)})_{i=1}^d\in\uu{H}_{\E,0}(\Omega)$, where
  $\xi^{(i)}=\sum_{\sigma\in\E^{(i)}_\mathrm{int}}\xi_{\sigma}\mcal{X}_{D_\sigma},\;
  i=1,\dots,d$, is given by 
  \begin{equation}
    \label{eq:div_free_tf}
    \xi_\sigma\coloneqq (\D_{\E}^{(i)}\pi^n)_\sigma -
    \frac{1}{\tilde\rho_\sigma}\theta^n_\sigma.
  \end{equation}
  Since $\pi^n$ solves \eqref{eq:el_bdary_val}, we notice that
  $\dive_{\M}\uu{\xi}=0$ and the identity \eqref{eq:wt_div_free_lim}
  further implies that 
  \begin{equation}
    \label{eq:stiff_lim1}
    \sum_{i=1}^d\sum_{\sigma\in\E^{(i)}_{\mathrm{int}}}\abs{D_\sigma}
    \theta^n_\sigma\frac{\xi_\sigma}{\tilde\rho_\sigma} = 0.
  \end{equation}
  Now, for each $\sigma\in\E^{(i)}_{\mathrm{int}}$, $i=1,\dots,d$,
  replacing $\theta^n_\sigma$ using \eqref{eq:div_free_tf} in
  \eqref{eq:stiff_lim1}, we have
  \begin{equation}
    \label{eq:stiff_lim2}
    \sum_{i=1}^d\sum_{\sigma\in\E^{(i)}_{\mathrm{int}}}\abs{D_\sigma}
    ((\D_{\E}^{(i)}\pi^n)_\sigma-\xi_\sigma)\xi_\sigma = 0.
  \end{equation}
  An application of the div-grad duality \eqref{eq:disc_dual} and
  the fact that $\dive_{\M}\uu{\xi}=0$, the above expression yields
  \begin{equation}
    \label{eq:stiff_lim3}
    \sum_{i=1}^d\sum_{\sigma\in\E^{(i)}_{\mathrm{int}}}
    \abs{D_\sigma}\absq{\xi_\sigma}=0,
  \end{equation}
  and hence we have \eqref{eq:tilde_rho_grad}.
\end{proof}
Based on these convergences and the uniform boundedness obtained in
Lemmas \ref{lem:dis_var_conv_bd} and \ref{lem:stiff_lim},  we
deduce the following theorem, where we show that the velocity
stabilised semi-implicit scheme 
\eqref{eq:disc_mas_updt}-\eqref{eq:disc_mom_updt} converges to a
velocity stabilised semi-implicit scheme for the anelastic Euler
system as $\veps\to 0$.

\begin{theorem}
  \label{thm:dis_anelastic_limit}
  Let $(\veps^{(k)})_{k\in \mathbb{N}}$ be a sequence of positive
  numbers such that $\veps^{(k)}\to 0$ as $k\to\infty$.
  Let $\{(\rho^{(k)},\uu{u}^{(k)})\}_{k\in\mathbb{N}}$ be the corresponding
  sequence of numerical solutions obtained from the scheme
  \eqref{eq:disc_mas_updt}-\eqref{eq:disc_mom_updt} with respect to a
  sequence of well-prepared initial data
  $\{(\rho^{(k)}_{0},\uu{u}^{(k)}_{0})\}_{k\in\mbb{N}}\subset
  L^\infty(\Omega)^{1+d}$. Then $\{\rho^{(k)}\}_{k\in\mathbb{N}}$
  converges to $\tilde{\rho}\in L_\M(\Omega)$ in
  $L^{\infty}(0,T;L^r(\Omega))$ for any $r\in(1, \min\{2,\gamma\}]$ and
  so in any discrete norm on $L_\M(\Omega)$. Furthermore, there exists
  $(\pi, \uu{U})\in L^\infty(0,T;
  L_\M(\Omega)\times\uu{H}_{\E,0}(\Omega))$ such that
  $\{\uu{u}^{(k)}\}_{k\in\mathbb{N}}$ converges to $\uu{U}\in
  L^\infty(0,T; \uu{H}_{\E,0}(\Omega))$ in any discrete norm as $k\to
  \infty$ and the piecewise constant function $\{(\pi^n,
  \uu{U}^n)\}_{1\leq n\leq N}\subset L_\M(\Omega)\times 
  \uu{H}_{\E,0}(\Omega)$ is defined as follows.
  
  Given $(\pi^n, \uu{U}^n)\in L_\M(\Omega)\times
  \uu{H}_{\E,0}(\Omega)$ at time $t^n$, $(\pi^{n+1}, \uu{U}^{n+1})\in
  L_\M(\Omega)\times \uu{H}_{\E,0}(\Omega)$ solves the semi-implicit
  scheme
  \begin{gather}
    (\dive_\M(\tilde{\rho}(\uu{U}^n-\delta\uu{U}^{n+1}))_{K}=0,
    \; \forall K \in \mcal{M}, \label{eq:dis_anelastic_mas}\\
    \frac{1}{\dt}\tilde{\rho}_{\Ds}\big(U_\s^{n+1}-U_\s^{n}\big)
    +\frac{1}{\abs{\Ds}}\sum_{\epsilon\in\tilde{\E}(\Ds)}
    F_{\epsilon,\sigma}(\tilde{\rho},\uu{U}^{n}
    -\delta\uu{U}^{n+1})U_{\epsilon,\mathrm{up}}^{n}
    +\tilde{\rho}_{\sigma}(\partial^{(i)}_{\E}\pi^{n+1})_{\s}=0,
    \ 1\leq i\leq d, \ \forall \s\in\E_\intr^{(i)}, \label{eq:dis_anelastic_mom}
  \end{gather}
  with the correction term $(\delta
  U^{n+1})_\s=\eta\dt(\D_\E^{(i)}\pi^{n+1})_\s,\;
  \eta>\frac{1}{\tilde{\rho}_{\Ds}}$ for $\s\in\Eint^{(i)}$,
  $i=1,2,\dots d$.
\end{theorem}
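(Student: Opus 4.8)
The whole argument rests on the fact that the mesh $\mcal{T}$ and the time-step $\dt$ are held fixed while $\veps^{(k)}\to 0$, so that $\Lm$ and $\Hez$ are finite-dimensional: all discrete norms are then equivalent, bounded sequences are precompact, and weak convergence implies strong convergence. The plan is to pass to the limit $k\to\infty$ in the two updates \eqref{eq:disc_mas_updt}--\eqref{eq:disc_mom_updt}, written at every time level simultaneously, and to identify the resulting system with \eqref{eq:dis_anelastic_mas}--\eqref{eq:dis_anelastic_mom}.

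First I would collect the convergences that hold for the full sequence and at every time level. Lemma~\ref{lem:dis_var_conv_bd}(i) gives $(\rho^{(k)})^{n}\to\tilde\rho$ in $L^\infty(0,T;L^r(\Omega))$, hence, by norm equivalence, $(\rho^{(k)})^{n}_K\to\tilde\rho_K$, $(\rho^{(k)})^{n}_\sigma\to\tilde\rho_\sigma$ and $(\rho^{(k)})^{n}_{D_\sigma}\to\tilde\rho_{D_\sigma}$ for every $K$, $\sigma$ and $D_\sigma$. By Corollary~\ref{cor:stiff_wk_lim} the stiff quantity $\tfrac{1}{(\veps^{(k)})^2}(\grd_{\E} p^{n}+(\rho^{(k)})^{n}\grd_{\E}\phi)$ converges, weakly and hence strongly, to some $\uu{\theta}^{n}\in\Hez$, and Lemma~\ref{lem:stiff_lim} exhibits a discrete potential $\pi^{n}\in\Lm$ with $\theta^{n}_\sigma=\tilde\rho_\sigma(\D^{(i)}_\E\pi^{n})_\sigma$. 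In particular the implicit stabilisation \eqref{eq:imp_vel_stab} converges, and its limit together with $\pi^{n}$ furnishes the correction $\delta\uu{U}^{n}$ appearing in the limit scheme.

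It remains to handle the velocity. By Lemma~\ref{lem:dis_var_conv_bd}(ii) the family $\{(\uu{u}^{(k)})^{n}\}_k$ is bounded in the finite-dimensional space $\Hez$, so after extracting a single subsequence (there are only finitely many levels $0\le n\le N$) we obtain $(\uu{u}^{(k)})^{n}\to\uu{U}^{n}$ in every discrete norm, where $\uu{U}^0=\uu{U}_0$ is fixed by the well-preparedness of the data, cf.\ Definition~\ref{def:wp_id}(iii) and Lemma~\ref{lem:wp_est}. With all ingredients converging, I would pass to the limit term by term. In the mass update \eqref{eq:disc_mas_updt} the discrete time derivative vanishes because $(\rho^{(k)})^{n+1}_K$ and $(\rho^{(k)})^{n}_K$ share the limit $\tilde\rho_K$, while $F_{\sigma,K}((\rho^{(k)})^{n+1},(\uu{u}^{(k)})^n)\to F_{\sigma,K}(\tilde\rho,\uu{U}^n-\delta\uu{U}^{n+1})$, which yields the anelastic constraint \eqref{eq:dis_anelastic_mas}. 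In the momentum update \eqref{eq:disc_mom_updt} the accumulation term tends to $\tfrac{1}{\dt}\tilde\rho_{D_\sigma}(U^{n+1}_\sigma-U^{n}_\sigma)$, the combined pressure--gravity term tends to $\tilde\rho_\sigma(\D^{(i)}_\E\pi^{n+1})_\sigma$ by the previous paragraph, and the convective term tends to $\tfrac{1}{\abs{\Ds}}\sum_{\epsilon\in\tilde{\E}(\Ds)}F_{\epsilon,\sigma}(\tilde\rho,\uu{U}^n-\delta\uu{U}^{n+1})U^n_{\epsilon,\mathrm{up}}$, giving \eqref{eq:dis_anelastic_mom}.

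Two points need care. The convective flux is only Lipschitz in the data because of the upwind selection \eqref{eq:mom_up}; writing it through the continuous positive and negative parts $F^{\pm}_{\epsilon,\sigma}$ of the dual mass fluxes shows nonetheless that it passes to the limit under the pointwise convergences established above, so this is merely technical. The genuine obstacle is to upgrade the subsequential convergence of the velocities to convergence of the full sequence. For this I would observe that the limit system \eqref{eq:dis_anelastic_mas}--\eqref{eq:dis_anelastic_mom} determines the trajectory $\{(\pi^{n},\uu{U}^{n})\}_{n}$ uniquely, step by step, from the datum $\uu{U}_0$: through the definition of the correction term, the constraint \eqref{eq:dis_anelastic_mas} is a discrete elliptic problem for $\pi^{n+1}$ analogous to \eqref{eq:el_bdary_val}, uniquely solvable up to an additive constant, after which \eqref{eq:dis_anelastic_mom} fixes $\uu{U}^{n+1}$. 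Since the initial data converge and every subsequential limit solves this uniquely solvable system, all subsequential limits coincide and the full sequence converges, which completes the proof.
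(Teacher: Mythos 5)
Your proof is correct and follows precisely the route the paper intends: the paper actually states Theorem~\ref{thm:dis_anelastic_limit} without a written proof, presenting it as a direct deduction from Lemma~\ref{lem:dis_var_conv_bd}, Corollary~\ref{cor:stiff_wk_lim} and Lemma~\ref{lem:stiff_lim}, which is exactly the limit passage you carry out for the fixed (hence finite-dimensional) discretisation. The two points you single out for extra care --- passing to the limit in the upwind convection terms via the continuous positive/negative parts of the dual fluxes, and upgrading subsequential to full-sequence convergence of the velocities through the unique step-by-step solvability (up to an additive constant in $\pi^{n+1}$) of the limit scheme starting from the limit of the well-prepared initial velocities --- are details the paper leaves entirely implicit, and you handle them correctly.
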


\section{Numerical Case Studies}
\label{sec:num_exp}

In this section, we present the results of several numerical tests
performed with the proposed scheme
\eqref{eq:disc_mas_updt}-\eqref{eq:disc_mom_updt}. The stability
analysis performed in Section~\ref{sec:enstab_wb}, cf.\
Theorem~\ref{thm:dis_totbal}, requires that the time-steps $\dt$ are
to be chosen according to the condition  
\begin{equation}
\label{eq:time-step_res}
     \frac{\dt}{|\Ds|}\sum_{\epsilon\in\tilde\E(\Ds)}\frac{F_{\epsilon,\s}(\rho^{n+1},\uu{u}^n)^-}{\rho_{\Ds}^{n+1}}\leq\frac{1}{2},\;\forall
     \s\in\E^{(i)},\; i=1,\dots,d. 
\end{equation}
Since the above stability condition is implicit, carrying out the
analogous calculations as done in \cite{CDV17, DVB17,DVB20}, we obtain
the following sufficient condition on the time-step which is easier to
implement in practice. Nonetheless, the sufficient condition is still
implicit and we impose it in an explicit fashion.    
\begin{proposition}
Suppose $\dt>0$ be such that for each $\s\in\E^{(i)},\;
i\in\{1,\dots,d\},\; \s = K|L$, the following holds: 
\begin{equation}
\label{eq:time-step_suff}
    \dt\max\Bigg\{\frac{\abs{\D K}}{\abs{K}},\frac{\abs{\D L}}{\abs{L}}\Bigg\}\left(\abs{u^n_\s} + \frac{\sqrt{\eta}}{\veps}\Big|(p^{n+1}_L - p^{n+1}_K) + \rho^{n+1}_\sigma(\phi_L - \phi_K)\Big|^{\half}\right)\leq \min\Big\{1,\; \frac{1}{3}\mu^{n,n+1}_{K,L}\Big\},
\end{equation}
where $\abs{\D K}=\sum_{\s\in\E(K)}\abs{\s}$ and $\mu^{n,n+1}_{K,L} = \displaystyle\frac{\min\{\rho^n_K,\rho^n_L\}}{\max\{\rho^{n+1}_K,\rho^{n+1}_L\}}$. Then $\dt$ satisfies the inequality \eqref{eq:time-step_res}.
\end{proposition}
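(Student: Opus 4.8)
The goal is to show that the explicit restriction \eqref{eq:time-step_suff}, assumed to hold at \emph{every} interface, forces the implicit stability condition \eqref{eq:time-step_res}. The plan is to estimate the left-hand side of \eqref{eq:time-step_res} at a fixed $\s=K|L$ from above, splitting the contribution of each flux into an advective piece controlled by the $\abs{u^n_\s}$ term of \eqref{eq:time-step_suff} and a stabilisation piece controlled by its second term; the proof follows the technique of \cite{CDV17, DVB17, DVB20}, the only new feature being the gravitational contribution carried inside the velocity shift \eqref{eq:imp_vel_stab}.

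First I would dominate $F_{\epsilon,\s}(\rho^{n+1},\uu{u}^n)^-\leq\abs{F_{\epsilon,\s}(\rho^{n+1},\uu{u}^n)}$ and rewrite each dual mass flux as the prescribed linear combination, with constant coefficients, of the neighbouring primal mass fluxes $F_{\tau,K}(\rho^{n+1},\uu{u}^n)$, $\tau\in\E(K)$, and $F_{\tau,L}(\rho^{n+1},\uu{u}^n)$, $\tau\in\E(L)$, as in Definition~\ref{def:disc_conv_flux} (cf.\ \cite{GHM+16}). Using the geometric relations between dual and primal volumes, this yields the standard bound
\[
\frac{1}{\abs{\Ds}}\sum_{\epsilon\in\tilde\E(\Ds)}\abs{F_{\epsilon,\s}}
\leq\frac{1}{2}\Big(\frac{1}{\abs{K}}\sum_{\tau\in\E(K)}\abs{F_{\tau,K}}
+\frac{1}{\abs{L}}\sum_{\tau\in\E(L)}\abs{F_{\tau,L}}\Big).
\]
For each primal edge $\tau=K'|L'$ I would then use $\abs{F_{\tau,K'}}=\abs{\tau}\rho^{n+1}_\tau\abs{u^n_\tau-\delta u^{n+1}_\tau}\leq\abs{\tau}\rho^{n+1}_\tau\abs{u^n_\tau}+\abs{\tau}\rho^{n+1}_\tau\abs{\delta u^{n+1}_\tau}$, and recall from \eqref{eq:imp_vel_stab} together with the definition \eqref{eq:dis_grd} of the discrete gradient that $\delta u^{n+1}_\tau=\frac{\eta\dt}{\veps^2}\frac{\abs{\tau}}{\abs{D_\tau}}P_\tau$, where $P_\tau:=(p^{n+1}_{L'}-p^{n+1}_{K'})+\rho^{n+1}_\tau(\phi_{L'}-\phi_{K'})$. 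Since the fluxes entering \eqref{eq:time-step_res} at $\s$ live on the edges of the cells adjacent to $\Ds$, the hypothesis that \eqref{eq:time-step_suff} holds at \emph{all} interfaces is exactly what lets me control each of these neighbouring contributions.

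The decisive device is the treatment of the quadratic stabilisation piece. Writing $b_\tau:=\dt\,\frac{\abs{\D K'}}{\abs{K'}}\,\frac{\sqrt{\eta}}{\veps}\,\abs{P_\tau}^{\half}$, the second summand of \eqref{eq:time-step_suff} at $\tau$ reads $\abs{u^n_\tau}$-term plus $b_\tau\leq\min\{1,\tfrac13\mu^{n,n+1}_{K',L'}\}$; because this bound is at most $1$, one has $b_\tau^2\leq b_\tau\leq\tfrac13\mu^{n,n+1}_{K',L'}$, and $b_\tau^2=\dt^2\big(\frac{\abs{\D K'}}{\abs{K'}}\big)^2\frac{\eta}{\veps^2}\abs{P_\tau}$ is, up to geometric factors, precisely the $O(\dt^2)$ stabilisation contribution produced by $\delta u^{n+1}_\tau$ after multiplication by the outer $\dt$. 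This is why the square-root $\abs{P_\tau}^{\half}$ and the $\min\{1,\cdot\}$ appear in \eqref{eq:time-step_suff}. The advective piece is handled directly by $\dt\,\frac{\abs{\D K'}}{\abs{K'}}\abs{u^n_\tau}\leq\tfrac13\mu^{n,n+1}_{K',L'}$.

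The remaining work, which I expect to be the main obstacle, is the density bookkeeping: the left-hand side of \eqref{eq:time-step_res} carries ratios $\rho^{n+1}_\tau/\rho^{n+1}_{\Ds}$, and one must show that combining these with the geometric constants and the two estimates $b_\tau^2\leq\tfrac13\mu$ and advective $\leq\tfrac13\mu$ keeps the total below $\tfrac12$. Here I would invoke the interface monotonicity $\rho^{n+1}_\tau\in\llbracket\rho^{n+1}_{K'},\rho^{n+1}_{L'}\rrbracket$ from Lemma~\ref{lem:rho_sig} and the fact that $\rho^{n+1}_{\Ds}$ is a convex combination of $\rho^{n+1}_K,\rho^{n+1}_L$ (hence bounded below by their minimum), so that every density ratio is dominated by $1/\mu^{n,n+1}_{K,L}$; the factor $\tfrac13$ is then exactly calibrated so that, after the $\mu$ factors cancel, the advective plus stabilisation contributions sum to at most $\tfrac12$. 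The delicate point is that \eqref{eq:time-step_suff} still involves $\rho^{n+1}$ through $P_\tau$ and $\mu^{n,n+1}_{K,L}$, so the estimate is genuinely implicit; I would therefore phrase the conclusion as an a posteriori verification, consistent with the remark that the condition is imposed in an explicit fashion in practice.
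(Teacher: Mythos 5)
The paper itself does not prove this proposition: the proof is explicitly omitted and delegated to \cite[Proposition 3.2]{CDV17} and to the analogous computation in \cite{AGK23}, so there is no line-by-line argument to compare against. Your sketch follows the same strategy as those references, and you correctly identify the structural reason for the form of \eqref{eq:time-step_suff}: the cap $\min\{1,\cdot\}$ allows you to square the stabilisation part, $b_\tau^2\leq b_\tau$, so that the genuinely quadratic-in-$\dt$ contribution coming from $\delta u^{n+1}_\tau$ in \eqref{eq:imp_vel_stab} is controlled by a quantity that enters the hypothesis only linearly, through $\abs{P_\tau}^{1/2}$. That much is right, as is the dual-to-primal flux decomposition.

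The gap is in your density bookkeeping, and it is not cosmetic. You claim that, since $\rho^{n+1}_\tau\in\llbracket\rho^{n+1}_{K'},\rho^{n+1}_{L'}\rrbracket$ and $\rho^{n+1}_{\Ds}\geq\min\{\rho^{n+1}_K,\rho^{n+1}_L\}$, ``every density ratio is dominated by $1/\mu^{n,n+1}_{K,L}$''. With $\mu^{n,n+1}_{K,L}=\min\{\rho^n_K,\rho^n_L\}/\max\{\rho^{n+1}_K,\rho^{n+1}_L\}$, that domination would require $\min\{\rho^{n+1}_K,\rho^{n+1}_L\}\geq\min\{\rho^n_K,\rho^n_L\}$, which is false in general: the density may drop from $t^n$ to $t^{n+1}$, and guarding against exactly this drop is the point of the condition. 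The correct mechanism has two coupled steps that your sketch skips. First, pair each primal flux with the $\mu$ of \emph{its own} interface, so that the numerator density cancels: $\rho^{n+1}_\tau\,\mu^{n,n+1}_{K',L'}\leq\min\{\rho^n_{K'},\rho^n_{L'}\}\leq\rho^n_{K'}$, giving $\frac{\dt}{\abs{K}}\sum_{\tau\in\E(K)}\abs{F_{\tau,K}(\rho^{n+1},\uu{u}^n)}\leq\frac13\rho^n_K$ for every cell $K$. Second---and this is the missing ingredient---this very bound must be fed back into the primal mass balance \eqref{eq:disc_mas_updt} to deduce $\rho^{n+1}_K\geq\frac23\rho^n_K$ for all $K$, hence $\rho^{n+1}_{\Ds}\geq\frac23\rho^n_{\Ds}$ by the convex-combination identity \eqref{eq:mass_dual}; this is precisely the estimate \eqref{eq:rho_3/2} advertised in the remark following Theorem~\ref{thm:dis_totbal}, and it is what the factor $\frac13$ is spent on. Only after this bootstrap can the dual denominator $\rho^{n+1}_{\Ds}$ in \eqref{eq:time-step_res} be traded for $\rho^n_{\Ds}$, i.e.\ for the same quantities $\rho^n_K,\rho^n_L$ that appear in the numerators, after which the constants $\frac13$, $\frac23$ and the $\frac12$ from the flux decomposition combine to yield exactly the bound $\frac12$. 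Without it, your estimate terminates in a ratio of the type $\rho^n_K/\min\{\rho^{n+1}_K,\rho^{n+1}_L\}$, which the hypothesis alone cannot bound. Your closing observation that the condition remains implicit and is verified a posteriori is consistent with the paper, but it does not substitute for this step.
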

\begin{proof}
We omit the proof and refer to \cite[Proposition 3.2]{CDV17} for 
calculations done in the case of an explicit scheme; see also
\cite{AGK23} for an analogous result derived for a velocity stabilised 
semi-implicit scheme applied to the barotropic Euler system without
source terms.   
\end{proof}

\begin{remark}
  \label{rem:implementation}
  The numerical implementation of the scheme
  \eqref{eq:disc_mas_updt}-\eqref{eq:disc_mom_updt} is done as
  follows. First, the mass conservation equation \eqref{eq:disc_mas_updt}
  is solved to get the updated density $\rho^{n+1}$. A Newton iteration
  has to be performed due to the presence of nonlinear stabilisation
  terms. In our experiments, we note that the iterations converge in
  2-3 steps. Once $\rho^{n+1}$ is calculated, the momentum update
  \eqref{eq:disc_mom_updt} is evaluated explicitly to get the velocity
  $\uu{u}^{n+1}$.   
\end{remark}

\label{sec:num_tst}
\subsection{Well-Balancing Test}
\label{subsec:wb_test}
In this test case we show that a hydrostatic steady state is exactly
preserved by the scheme uniformly with respect to $\veps$. We consider
the following one-dimensional (1D) steady state initial data:   
\begin{equation*}
    \rho(0, x) = \tilde{\rho}(x) = \Big(1 - \frac{\gm -
                 1}{\gm}\phi(x)\Big)^{\frac{1}{\gm - 1}}, \  u(0, x) = 0,
\end{equation*}
in the domain $[0,1]$ under three gravitational potentials defined by
the choices $\phi(x) = x, \half x^2, \sin (2\pi x)$. The value of the
adiabatic constant is $\gm = 1.4$. The domain is discretised into
$100$ grid points and the $L^1$-errors of the density and momentum
with respect to the above exact solution are calculated until a final
time $T=2$. In Table\,\ref{tab:isen_hydst_1d} we present the errors
for different values of $\veps$ corresponding to both the compressible
and stiff anelastic regimes. We observe that the velocity stabilised
semi-implicit scheme approximates the steady state with high
precision, irrespective of the choices of $\phi$ and irrespective of
the asymptotic regime. 

\begin{table}[ht]
\centering
\begin{adjustbox}{width=0.4\textwidth}
\small
\begin{tabular}{rlrr}
  \hline
   $\phi(x)$ & $\veps$ & $L^1$-error in $\rho$ & $L^1$-error in $\rho u$ \\ 
  \hline
            &$10^{-1}$	&5.7732E-17	&1.0495E-13 \\
   $x$      &$10^{-2}$	 &7.0777E-17 &3.8677E-13 \\
            &$10^{-3}$	&6.8001E-17	&1.0013E-13 \\
   \hline
                            &$10^{-1}$	&3.7192E-17	  &1.0722E-13 \\
   $\half x^2$              &$10^{-2}$	 &3.9968E-17   &1.7715E-13 \\
                            &$10^{-3}$	&3.9413E-17	  &4.7424E-14 \\
    \hline
                            &$10^{-1}$	&2.0983E-16	&2.4883E-13 \\
   $\sin (2\pi x)$          &$10^{-2}$	 &2.2260E-16 &4.3341E-13 \\
                            &$10^{-3}$	&2.1122E-16	&1.6502E-13 \\
    \hline
\end{tabular}
\end{adjustbox}
\caption{$L^1$-errors with respect to a 1D hydrostatic equilibrium.} 
\label{tab:isen_hydst_1d}
\end{table} 

\subsection{Strong Rarefaction}
\label{subsec:str_rrf}
This case study is to showcase the scheme's capability of preserving
the positivity of density under a strong rarefaction. In that order we
solve the following extreme Riemann problem from \cite{TPK20}. The
initial density is given by the hydrostatic equilibrium  
\begin{equation*}
  \rho(0, x, y) = \tilde{\rho}(x,y)=
  \Big(1 - \frac{\gm - 1}{\gm}\phi(x, y)\Big)^{\frac{1}{\gm - 1}},
\end{equation*}
under the gravitational potential $\phi(x, y) = \frac{1}{2}((x-0.5)^2
+ (y-0.5)^2)$. The initial velocity is given by
\begin{equation}
    u(0, x, y) =\begin{cases}
     -5, &\mathrm{if}\;x\leq 0.5,
      \\
      5, &\mathrm{if}\;x>0.5,
    \end{cases} \quad v(0,x, y) = 0.
\end{equation}
The computational domain $[0,1]\times[0,1]$ is discretised into
$100\times100$ mesh points. Extrapolation boundary conditions are
applied on all sides of the domain. The adiabatic constant is chosen
as $\gm = 2$ and the simulations are run till the time $T=0.1$. Due to
the symmetry, we plot only the cross-sections of the density and
velocity profiles along the $x$-direction in
Figure\,\ref{fig:str_rrf}. The results clearly indicate the formation
of vacuum in the middle of the domain and the scheme's excellent
capability to maintain the positivity of density. 

\begin{figure}[htbp]
  \centering
  \includegraphics[height=0.25\textheight]{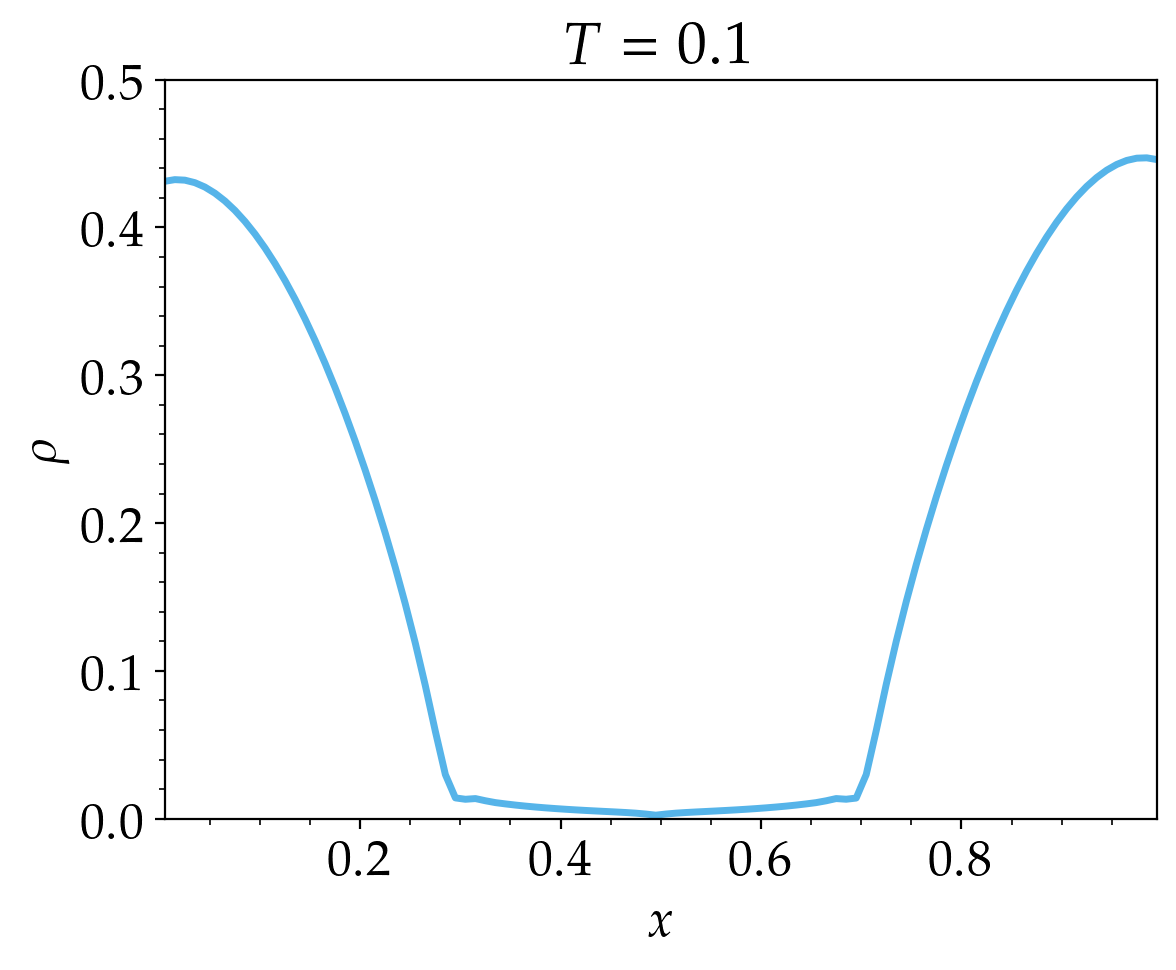}
  \includegraphics[height=0.25\textheight]{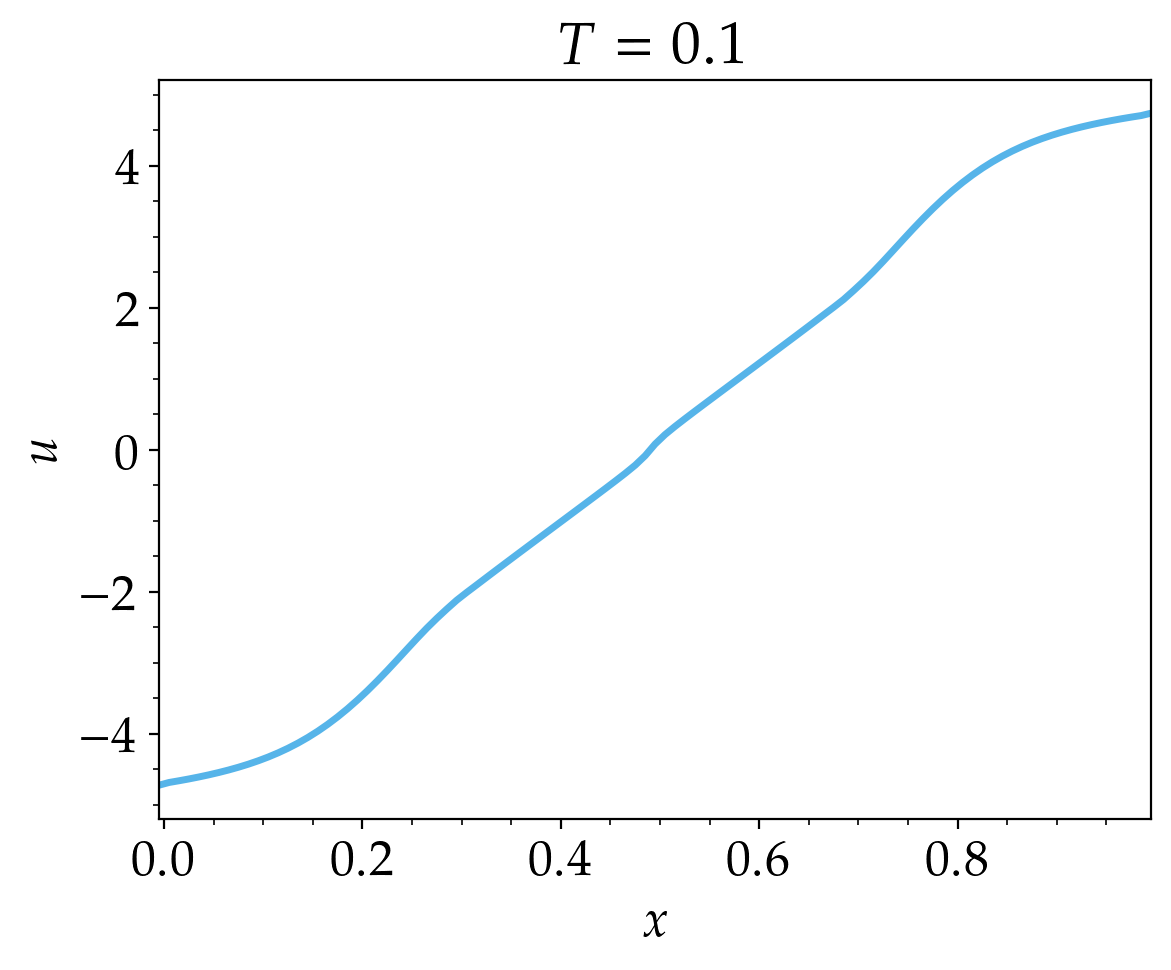}
  \caption{Cross-sections of density and velocity profiles at time
    $T=0.1$ for the strong rarefaction problem.}
  \label{fig:str_rrf}
\end{figure}

\subsection{Sod Problem with Gravitational Potential}
\label{subsec:sod}
This case study is the classical 1D Sod shock tube problem. Our goal
is to demonstrate the scheme's capability of resolving shock
discontinuities in the compressible regime. The initial conditions are
given by  
\begin{equation}
  \rho(0, x) =\begin{cases}
    1, &\mathrm{if}\;x< 0.5, \\
    0.125, &\mathrm{if}\;x\geq 0.5,
  \end{cases} \quad
  u(0,x)=0.
\end{equation}
The computational domain is $[0,1]$ and a linear gravitational
potential $\phi(x)=x$ is taken. The domain is discretised with $200$
grid points. Solid wall boundary conditions are applied on both the
ends. In order to simulate the compressible regime, we choose
$\veps=1$ and the adiabatic constant is taken as $\gm = 1.4$. The
simulations are run till the time $T=0.2$. We compare the computed 
density and velocity profiles to a reference solution obtained using a 
classical explicit Rusanov scheme on a fine mesh of 2000
points. The results are depicted in Figure\,\ref{fig:sod_1d} which
clearly shows a good agreement between the two solutions. It can be
noted that the scheme well captures the shock front propagating
towards the right followed by an expansion and that it emulates the
speed of the shock propagation correctly.   
\begin{figure}[htbp]
    \centering
    \includegraphics[height=0.25\textheight]{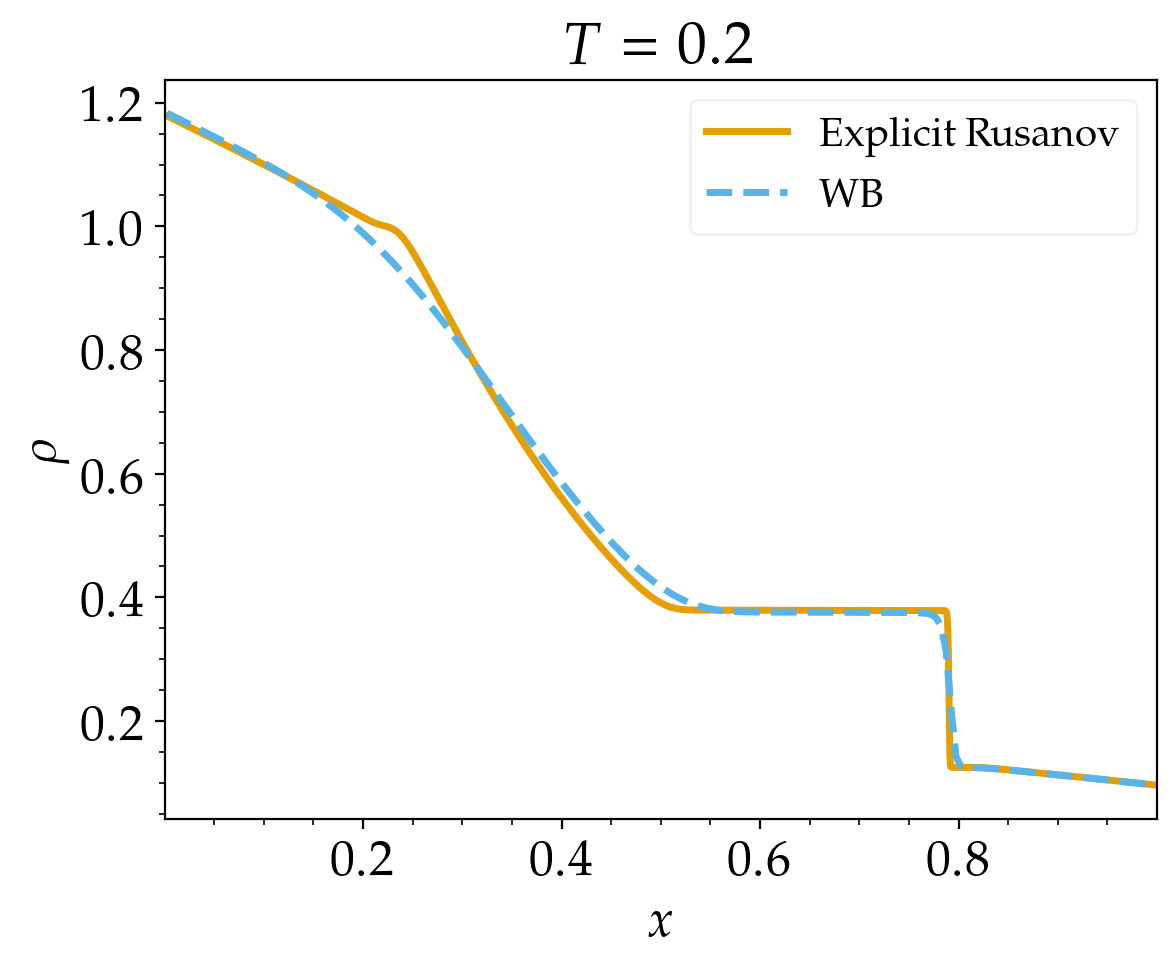}
    \includegraphics[height=0.25\textheight]{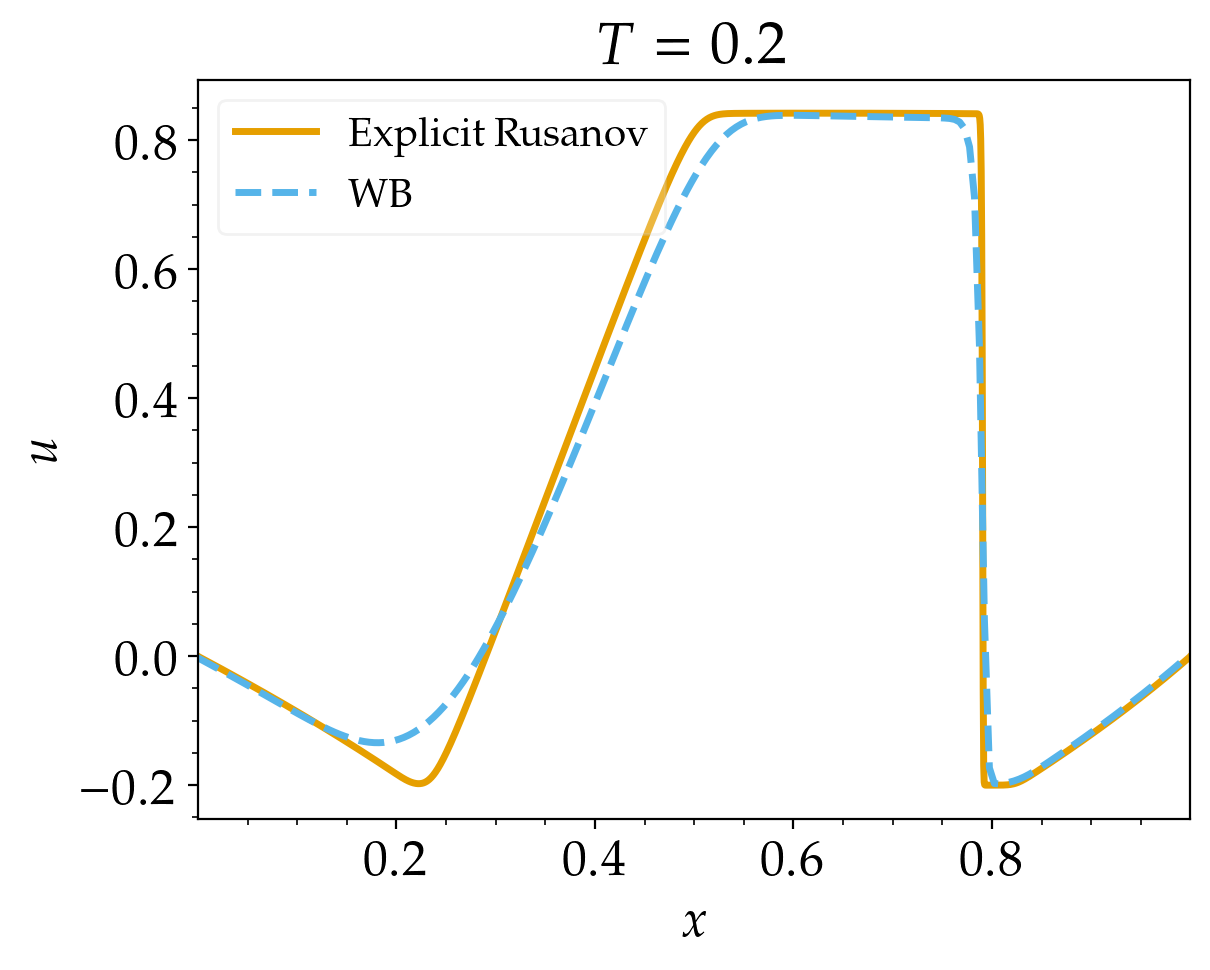}
    \caption{Density and velocity profiles for the Sod problem at time
      $T=0.2$.} 
    \label{fig:sod_1d}
\end{figure}
\subsection{1D Perturbation of Hydrostatic Steady State}
\label{subsec:1d_pert_hyd}
We test the performance of the proposed well-balanced AP scheme in
simulating the perturbation of a steady state against a non
well-balanced scheme for a range of values of $\veps$ from the
non-stiff to stiff regimes. The setup consists of adding a small
perturbation to a hydrostatic density profile and setting the
velocities to zero. The computational domain is $[0,1]$ and
the initial condition reads     
\begin{align}
    \rho(0,x) &=  \Big(1 - \frac{\gm -
                1}{\gm}\phi(x)\Big)^{\frac{1}{\gm - 1}} + \zeta
                e^{-100(x-0.5)^2}, \label{eq:1d_pert_den}\\
    u(0,x) &= 0.
\end{align}
We choose the gravitational potential function $\phi(x) = x$, the gas
constant $\gm = 1.4$, and the final time $T=0.25$. The domain is
discretised using $100$ grid points. The boundary conditions are
implemented by interpolating the steady state values for all the
variables in the ghost cells. 

In Figure\,\ref{fig:pert_hydst_1d_nstiff} we plot the density
perturbations obtained using $\veps = 1$ for the well-balanced and a
non well-balanced Rusanov scheme. The amplitudes of the perturbation
are taken as $\zeta = 10^{-3}$ and
$10^{-5}$. Figure\,\ref{fig:pert_hydst_1d_nstiff}(A) indicates that 
when $\zeta = 10^{-3}$, both the schemes diminish the perturbations
even though the well-balanced scheme shows a superior
performance. However, when the steady state is approached, i.e.\ when
$\zeta = 10^{-5}$, the non well-balanced scheme fails to capture the
perturbations as evident from Figures \ref{fig:pert_hydst_1d_nstiff}(B) and 
\ref{fig:pert_hydst_1d_nstiff}(C).      
 
\begin{figure}[htbp]
  \centering
  \includegraphics[height=0.18\textheight]{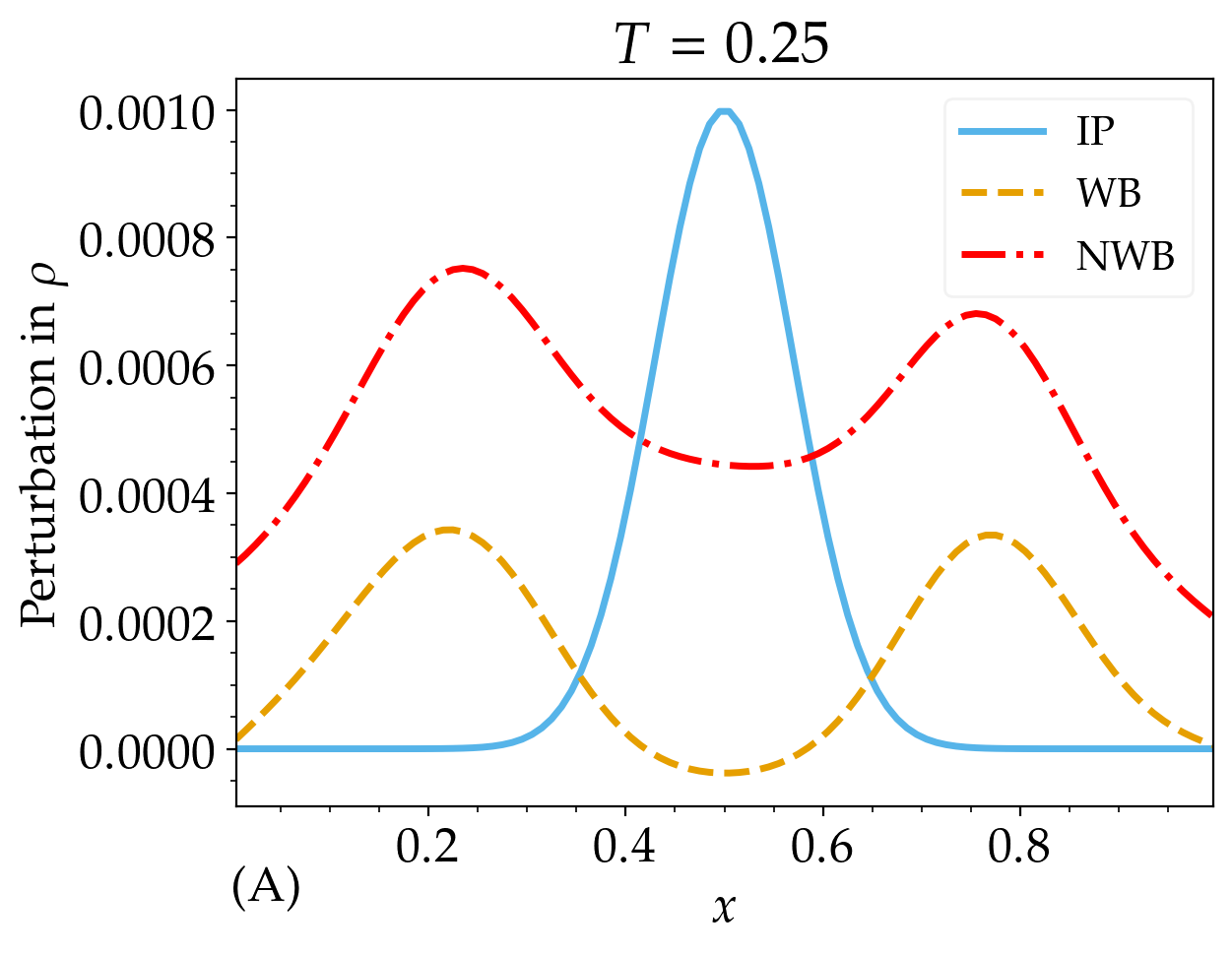}
  \includegraphics[height=0.18\textheight]{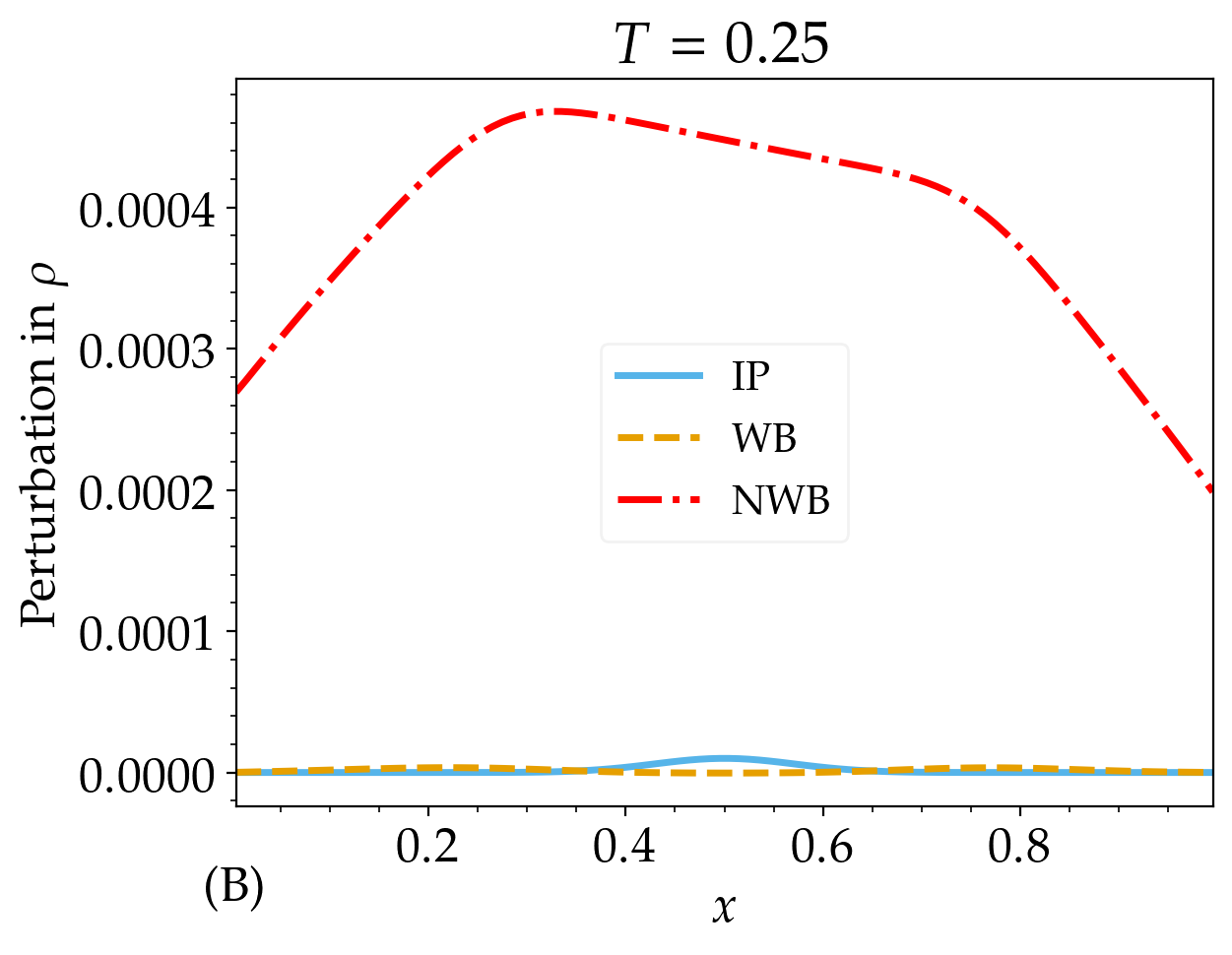}
  \includegraphics[height=0.18\textheight]{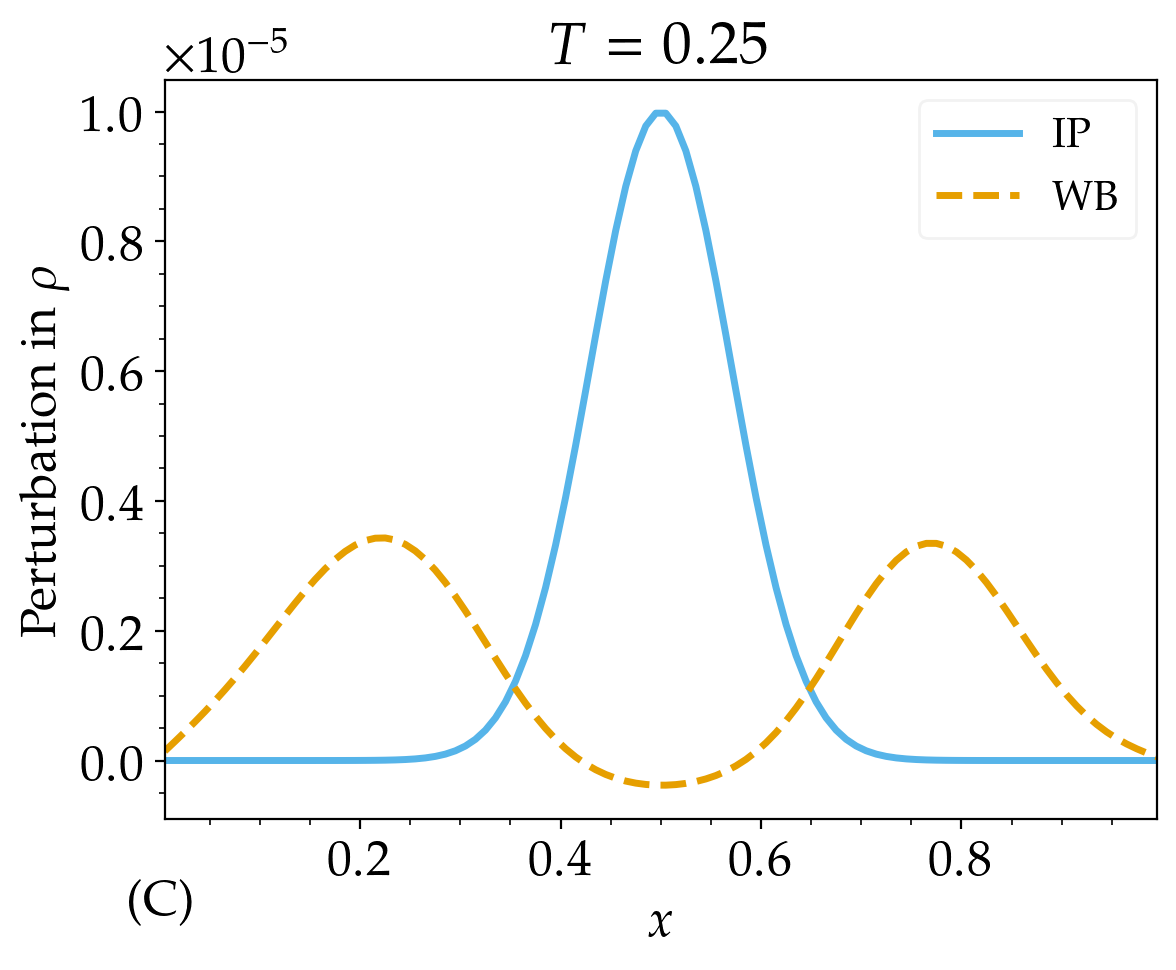}
  \caption{Comparison of the perturbations in density at time
    $T=0.25$ for $\veps = 1$. (A) $\zeta = 10^{-3}$ and (B) $\zeta 
    = 10^{-5}$. (C) Zoom of the plot in (B) for the initial
    perturbation and the well-balanced scheme.}
  \label{fig:pert_hydst_1d_nstiff}
\end{figure}

Next, we consider the stiff regime and set $\veps = 0.1, 0.01$ and
$0.001$. The amplitude of the perturbation in \eqref{eq:1d_pert_den} is
taken as $\zeta = \veps^2$ to simulate a well-prepared initial
data. We notice a blow-up of the density computed by the Rusanov 
scheme when $\veps=0.1$, and for smaller values of $\veps$ the Rusanov 
code even crashes. However, in all the cases, the well-balanced AP scheme
is able to produce the correct solution even on a coarse mesh of $100$
points. The results obtained are plotted in Figures
\ref{fig:pert_hydst_1d_stiff}(A)-\ref{fig:pert_hydst_1d_stiff}(C). 
  
\begin{figure}[htbp]
  \centering
  \includegraphics[height=0.18\textheight]{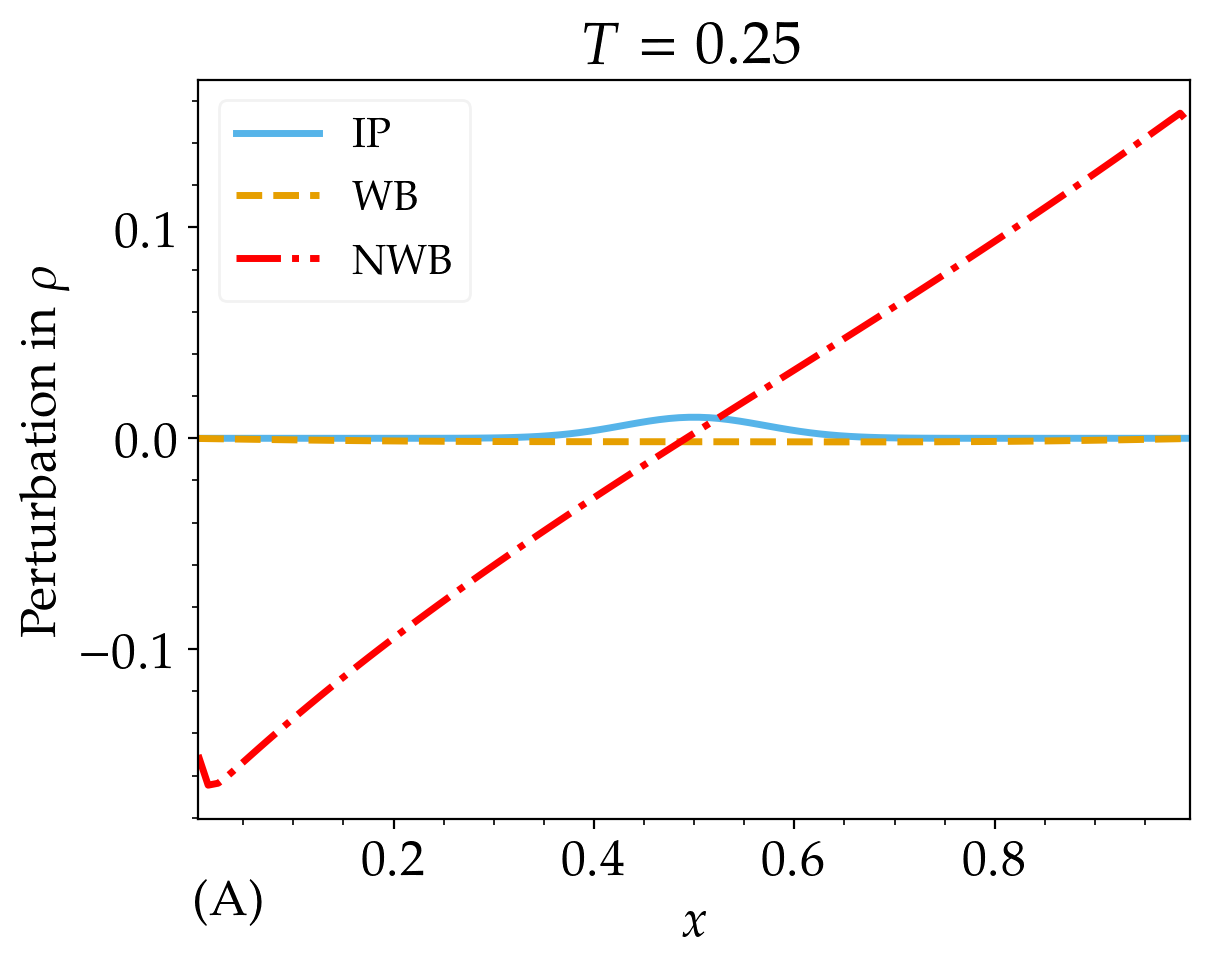}
  \includegraphics[height=0.18\textheight]{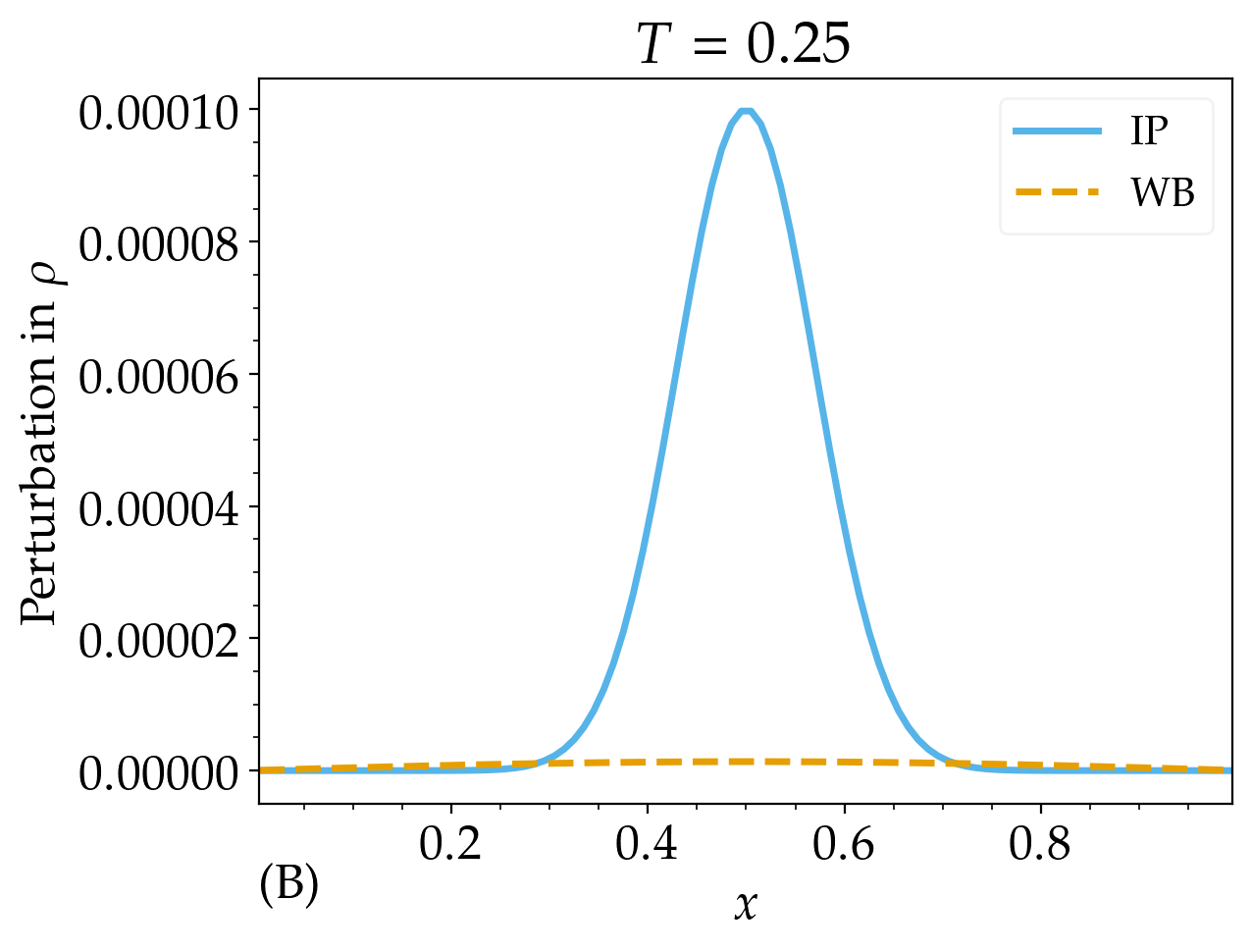} 
  \includegraphics[height=0.18\textheight]{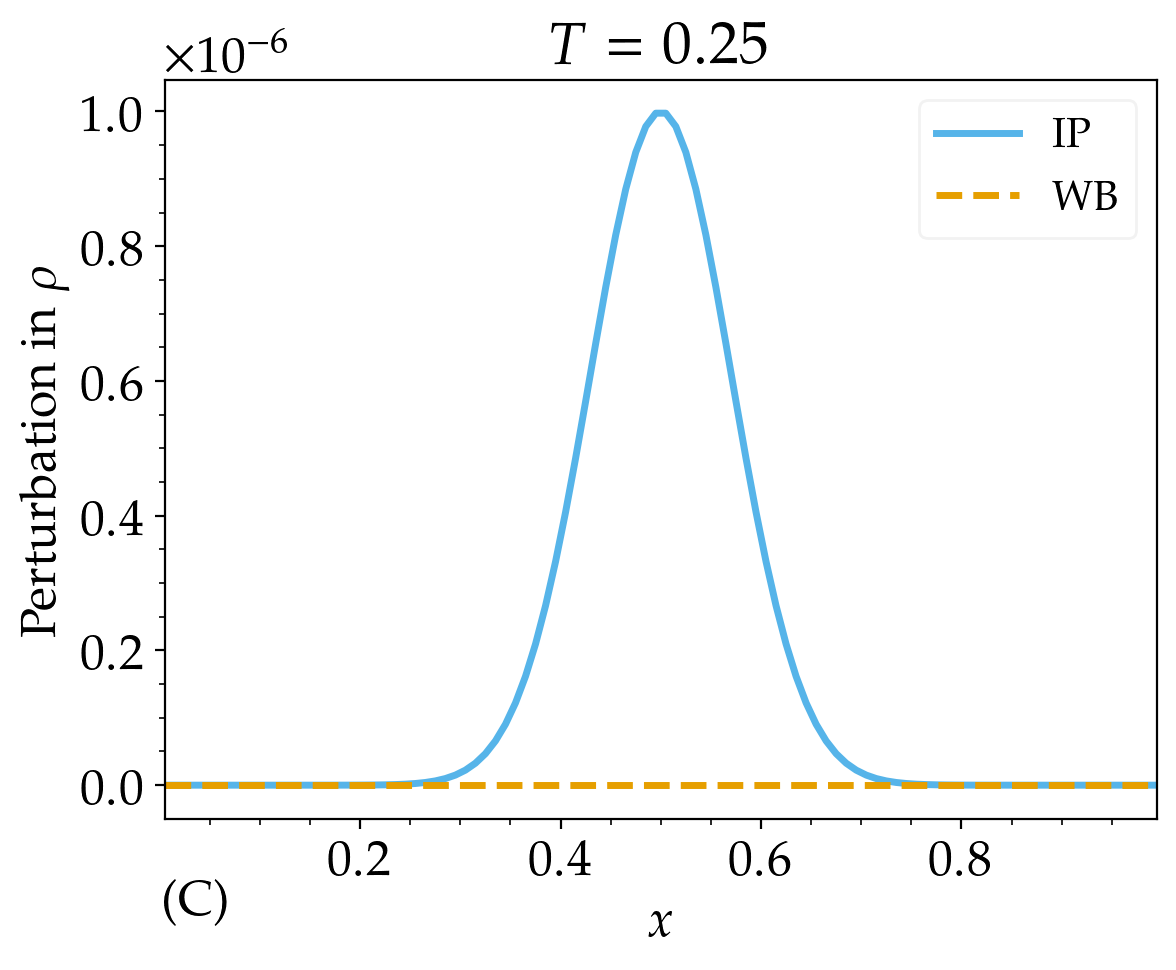}
  \caption{Comparison of the perturbations in density at time
    $T=0.25$. (A) $\veps = 0.1$, (B) $\veps = 0.01$ and (C) $\veps =
    0.001$. The amplitude of the perturbation is $\zeta = \veps^2$. The
    non well-balanced scheme crashes beyond $\veps=0.1$.}
  \label{fig:pert_hydst_1d_stiff}
\end{figure}

\subsection{2D Perturbation of Hydrostatic Steady State}
\label{subsec:2d_pert_hyd}
We consider the following two-dimensional (2D) test case where we add
a small perturbation to the hydrostatic density. The computational
domain is $[0,1]\times[0,1]$ and the initial conditions are given by 
\begin{align*}
    \rho(0,x,y) &=  \Big(1 - \frac{\gm -
                  1}{\gm}\phi(x,y)\Big)^{\frac{1}{\gm - 1}} + \zeta
                  e^{-100[(x-0.3)^2 + (y-0.3)^2]}, \\
  (u, v)(0,x, y) &= (0, 0).
\end{align*}
We choose the gravitational potential function $\phi(x,y)=x+y$
and the adiabatic constant $\gm = 1.4$. Note that $\phi$ is not
aligned with the coordinate directions. The domain is discretised with
$50\times50$ grid points and transmissive boundary conditions are
taken on all sides of the domain. We simulate the flow using the
well-balanced AP scheme and a non well-balanced explicit Rusanov
scheme and assess the performance of the two.     

First, we consider the non-stiff ($\veps = 1$) case and set the
amplitude of the perturbation to $\zeta = 10^{-1}$ and $10^{-3}$. The
final time of the run is taken as $T=0.05$. In Figures
\ref{fig:pert_hydst_2d_zm1_nstiff} and
\ref{fig:pert_hydst_2d_zm3_nstiff} we plot the density perturbations
obtained for the two values of $\zeta$. The results clearly indicate
the superior performance of the well-balanced scheme over its non
well-balanced counterpart which produces distortions near the
boundary. The circular structure of the perturbations are well
resolved by the well-balanced scheme despite the gravitational field
being non-aligned with the coordinate axes. 

\begin{figure}[htbp]
  \centering
  \includegraphics[height=0.30\textheight]{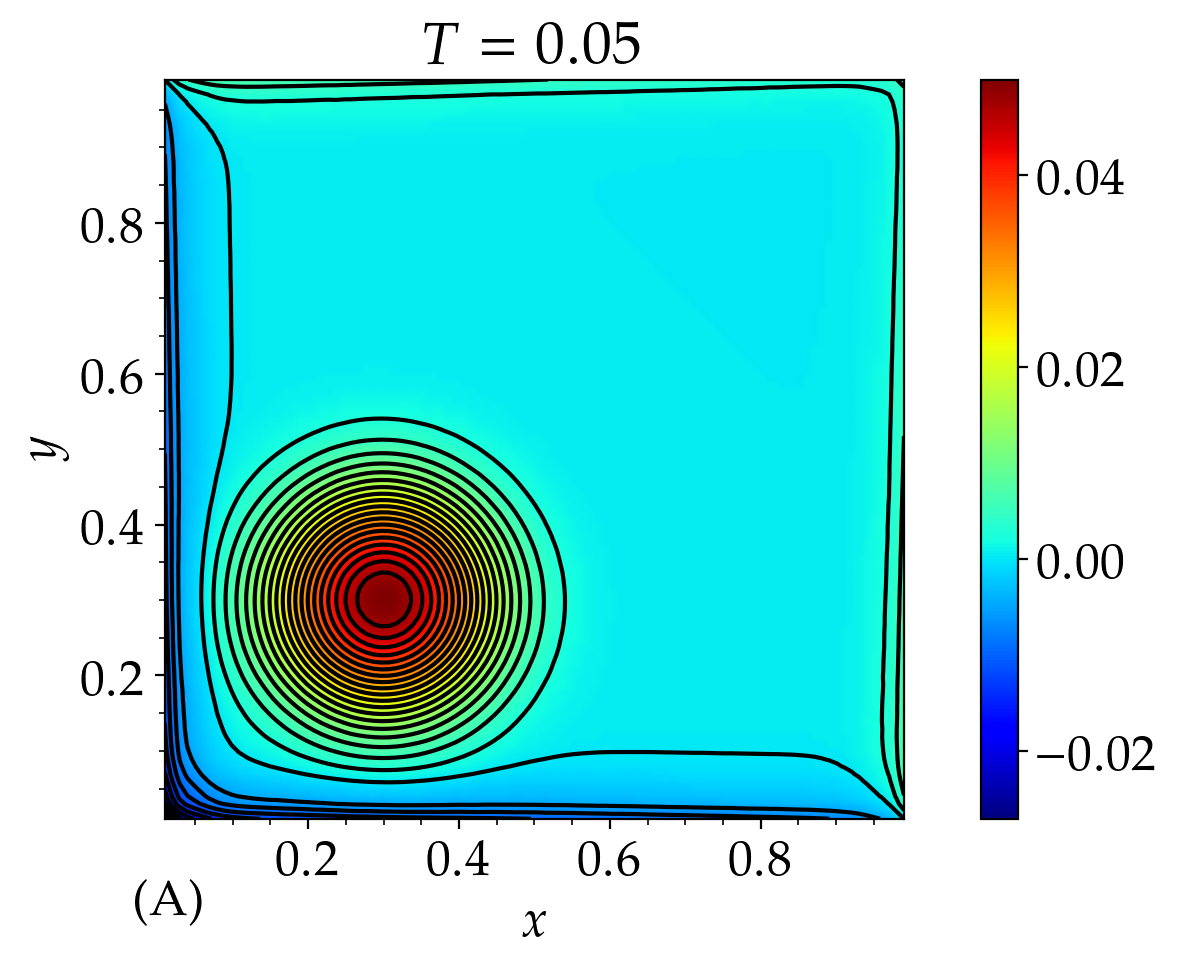}
  \includegraphics[height=0.30\textheight]{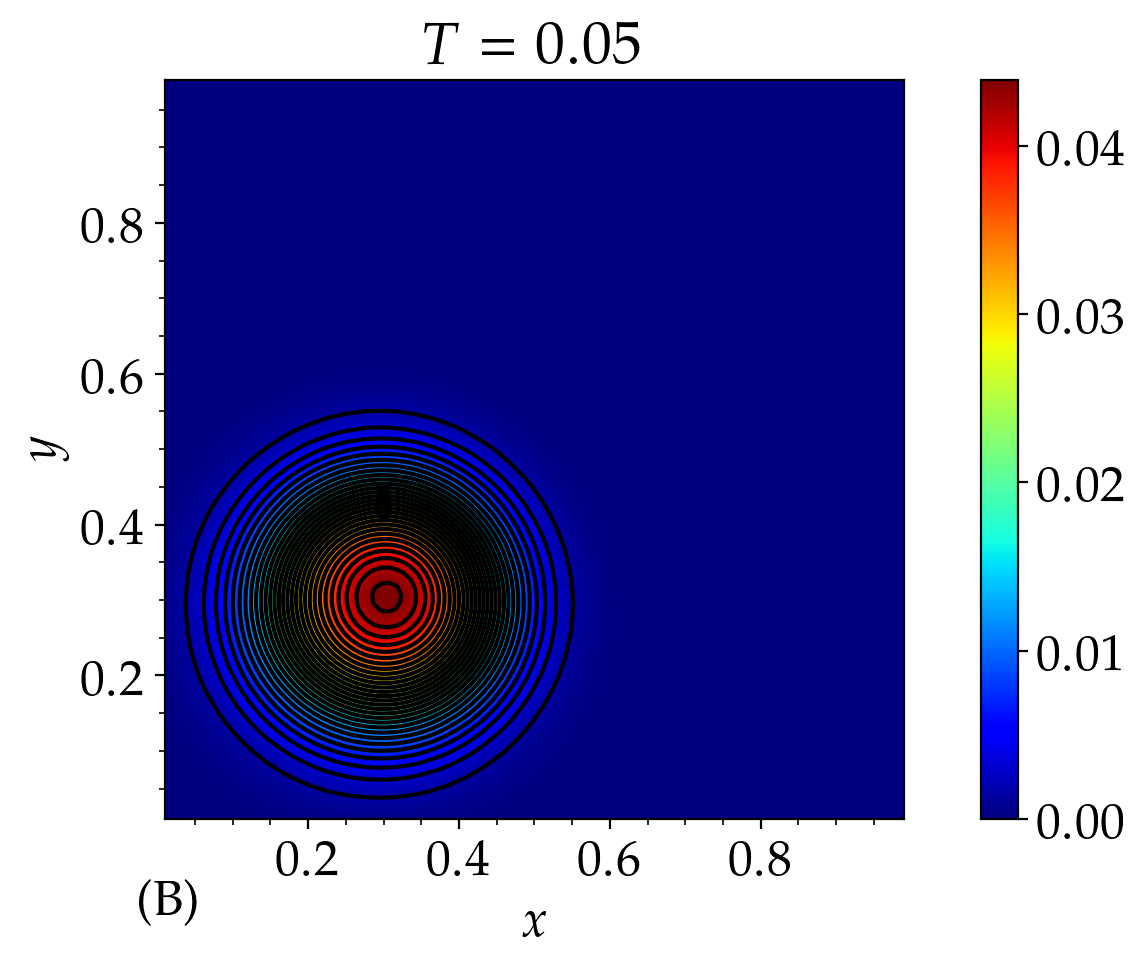} 
  \caption{Pseudo-colour plots and contours of the density
    perturbations obtained by (A) the non well-balanced explicit
    scheme and (B) the well-balanced scheme at time $T=0.05$ with
    $\veps = 1$ and $\zeta=10^{-1}$.} 
  \label{fig:pert_hydst_2d_zm1_nstiff}
\end{figure}
\begin{figure}[htbp]
  \centering
  \includegraphics[height=0.30\textheight]{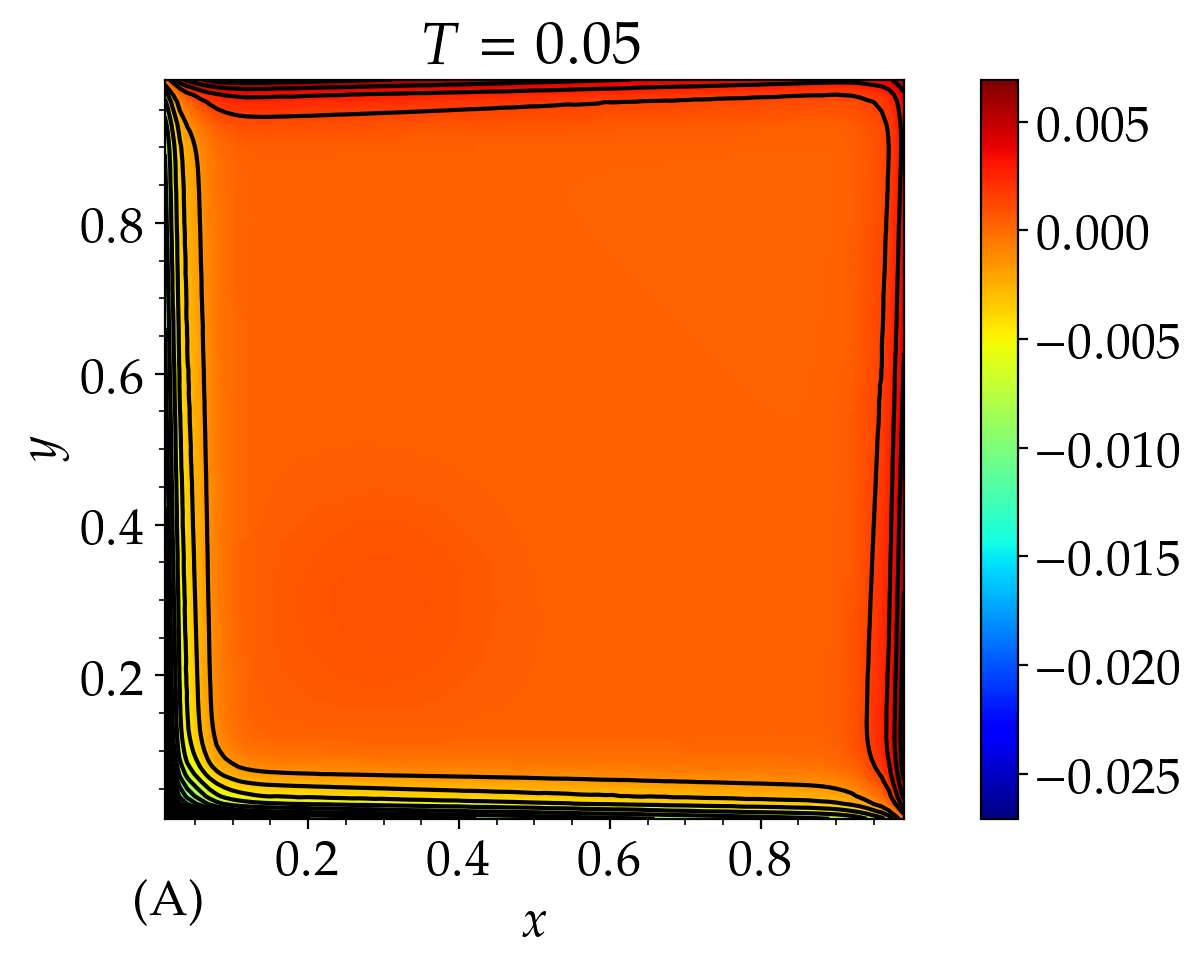}
  \includegraphics[height=0.30\textheight]{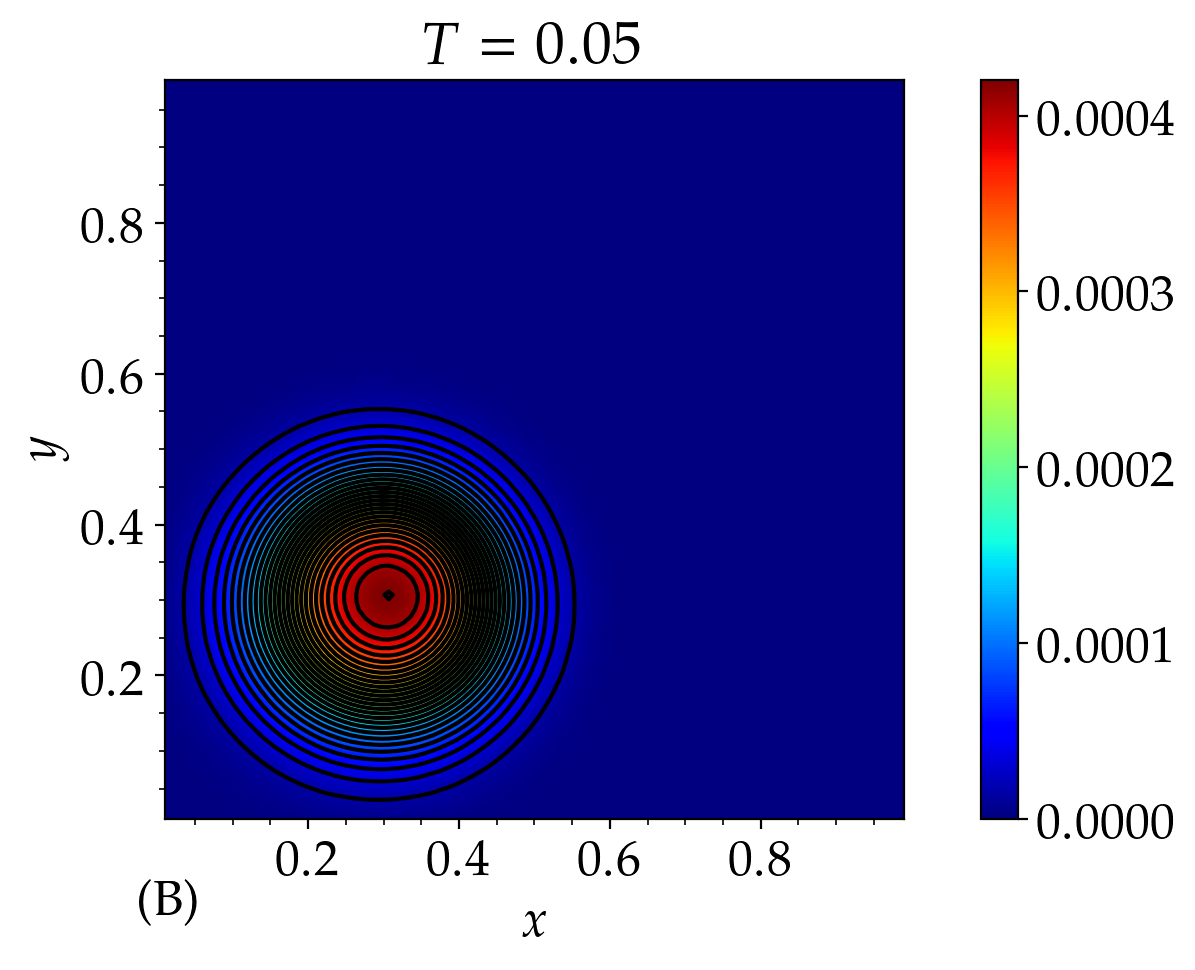} 
  \caption{Pseudo-colour plots and contours of the density
    perturbations obtained by (A) the non well-balanced explicit
    scheme and (B) the well-balanced scheme at time $T=0.05$ with
    $\veps = 1$ and $\zeta=10^{-3}$.}
  \label{fig:pert_hydst_2d_zm3_nstiff}
\end{figure}

Next, we assess the performance of the schemes in the stiff regimes by
setting $\veps = 0.1$ and $0.01$. In order to keep the data
well-prepared, we set $\zeta = \veps^2$ as before. Similar to the 1D
case, the non well-balanced code crashes for the chosen values
of $\veps$. Figure\,\ref{fig:pert_hydst_2d_stiff} clearly indicates that 
well-balanced AP scheme is able to accurately resolve the circular
structure of the perturbations.    

\begin{figure}[htbp]
  \centering
  \includegraphics[height=0.30\textheight]{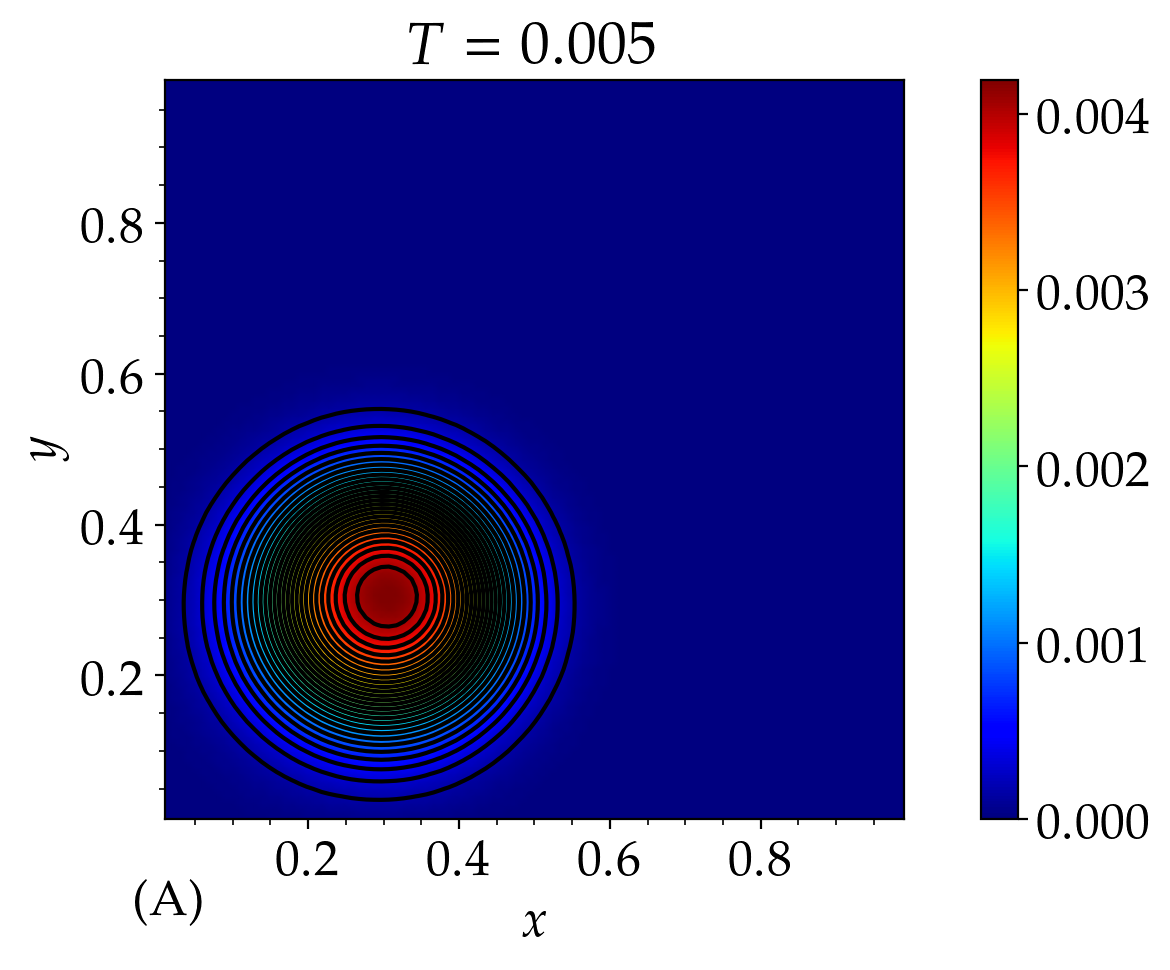}
  \includegraphics[height=0.30\textheight]{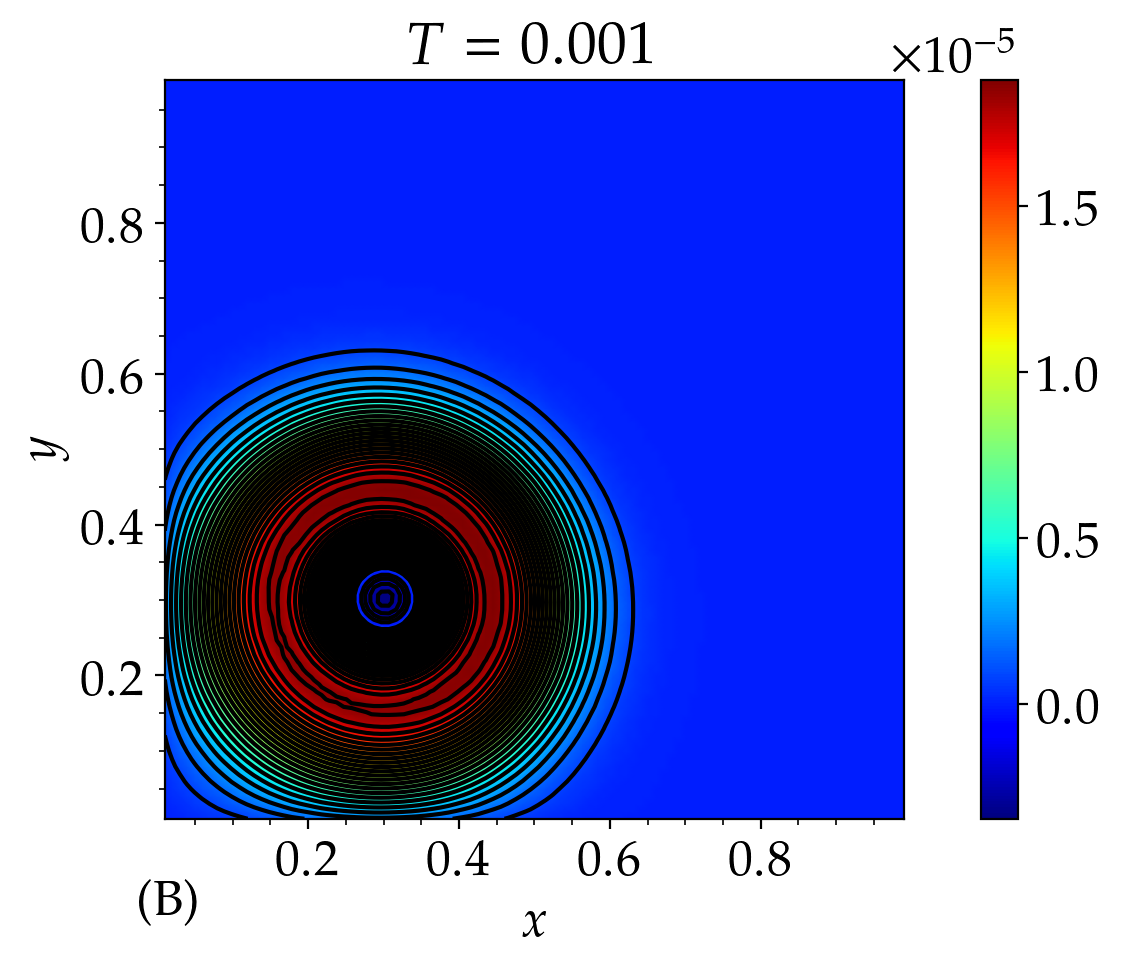} 
  \caption{Pseudo-colour plots and contours of the density perturbation
    computed using the well-balanced scheme. (A) $\veps=0.1$,
    $T=0.005$ (B) $\veps=0.01$, $T=0.001$.}
  \label{fig:pert_hydst_2d_stiff}
\end{figure}
\subsection{Stationary Vortex}
\label{subsec:stat_vort}
We consider the following test problem inspired by \cite{Zen18}
which simulates a stationary vortex under the action of a
gravitational field. The vortex test problem is a benchmark to assess
the low Mach number properties of numerical schemes and the
dependence of their dissipation on the Mach number. Since an exact
stationary solution is available, we also make use this problem to
corroborate the uniform first order convergence of the scheme with
respect to $\veps$. 

The computational domain is $[0,1]\times[0,1]$, the pressure law is
$\wp(\rho)=\rho^2$, and the gravitational potential is radially
symmetric with $\phi(r) = r^2$, where $r=\sqrt{(x-0.5)^2+(y-0.5)^2}$
denotes the distance from the center of the domain $(0.5, 0.5)$.

We define a radially symmetric and piecewise continuous angular
velocity, independent of $\phi$, as
\begin{equation*}
  u_\theta(r) =\begin{cases}
    a_1 r, &\mathrm{if}\;r\leq r_1,
    \\
    a_2 + a_3 r, &\mathrm{if}\;r_1<r\leq r_2,
    \\
    0, &\mathrm{otherwise}.
  \end{cases}
\end{equation*}
Here the inner and outer radii are $r_1 = 0.2,\; r_2 =
0.4$ and the constants are chosen as   
\begin{equation*}
  a_1 = \frac{\bar{a}}{r_1},\; a_2 = -\frac{\bar{a}r_2}{r_1-r_2},\;
  a_3
  = \frac{\bar{a}}{r_1 - r_2},    
\end{equation*}
with $\bar{a} = 0.1$. Since the vortex is stationary and has zero
radial velocity, it satisfies the balance
\begin{equation}
  \label{eq:stat_vtx}
  \frac{1}{\veps^2}\D_r\wp(\rho)=
  \frac{\rho u^2_{\theta}}{r}-\frac{1}{\veps^2}\rho\D_r\phi.
\end{equation}
We choose $\rho(0)=1$, use the given expressions for $u_\theta$
and $\phi$, and integrate \eqref{eq:stat_vtx} to obtain the following
stationary solution of the Euler system
\eqref{eq:cons_mas}-\eqref{eq:cons_mom} in polar co-ordinates:  
\begin{align*}
    \rho(0,x,y) &= 1 + \frac{\veps^2}{2}\int_0^r \frac{u^2_\theta(s)}{s}\dd s - \half r^2,
    \\
    u(0,x,y) &= u_\theta(r)(y - 0.5)/r,
    \\
    v(0,x,y) &= u_\theta(r)(0.5 - x)/r.
\end{align*}
The computational domain is discretised into $25\times 25$, $50\times
50$, $100\times 100$, $200\times 200$ and $400\times 400$ mesh points
and the final time is set to $T=1$. Periodic boundary conditions are
implemented on all sides of the domain.    

The $L^1$-errors in $\rho,\; \rho u$ and $\rho v$ about the stationary
solution along with the experimental order of convergence (EOC) for
different values of $\veps$ are tabulated in
Table\,\ref{tab:acc_tst}. The EOC values clearly indicate the uniform
first order convergence of the scheme irrespective of the value of
$\veps$.  

\begin{table}[ht]
\centering
\begin{adjustbox}{width=0.8\textwidth}
\small
\begin{tabular}{rlrrrrrrr}
  \hline
   $\veps$ & $N_x = N_y$ & $L^1$-error in $\rho$ & EOC & $L^1$-error in $\rho u$ & EOC &$L^1$-error in $\rho v$ & EOC\\ 
  \hline
                    &25	 &8.3926E-07 &--		&1.2063E-03 &--		&1.2063E-03 &--     \\
                    &50	 &4.6492E-07 &0.8521	&6.4911E-04 &0.8940	&6.4911E-04 &0.8940  \\
 $10^{-1}$          &100 &2.5632E-07 &0.8590	   &3.5668E-04 &0.8638 &3.5668E-04 &0.8638  \\
                    &200 &1.3345E-07 &0.9416	&1.8573E-04 &0.9414	&1.8573E-04 &0.9414  \\
                    &400 &6.8504E-08 &0.9620	&9.4904E-05 &0.9687	&9.4904E-05 &0.9687  \\
   \hline
                 &25	 &8.3044E-09 &--		&1.1826E-03 &--		&1.1826E-03 &--    \\
                 &50	 &4.4802E-09 &0.8903 	&6.1962E-04 &0.9325	&6.1962E-04 &0.9325 \\
 $10^{-2}$       &100 &2.3266E-09 &0.9453	&3.2613E-04 &0.9259	&3.2613E-04 &0.9259 \\
                 &200 &1.2075E-09 &0.9462	&1.6965E-04 &0.9429	&1.6965E-04 &0.9429 \\
                 &400 &6.3285E-10 &0.9321	&8.8317E-05 &0.9418	&8.8317E-05 &0.9418 \\
    \hline
                &25	 &8.2967E-11 &--		&1.1816E-03 &--		&1.1816E-03 &-- \\
                &50	 &4.4623E-11 &0.8948	&6.1780E-04 &0.9355	&6.1780E-04 &0.9355 \\
 $10^{-3}$      &100 &2.3021E-11 &0.9548	   &3.2427E-04 &0.9302 &3.2427E-04 &0.9302 \\
                &200 &1.1800E-11 &0.9642	&1.6771E-04 &0.9512	&1.6771E-04 &0.9512 \\
                &400 &6.0366E-12 &0.9670	&8.5966E-05 &0.9641	&8.5966E-05 &0.9641 \\            
    \hline
\end{tabular}
\end{adjustbox}
\caption{$L^1$-errors and EOC with respect to the exact solution.} 
\label{tab:acc_tst}
\end{table} 
In order to asses the dissipation of the scheme and its dependence on
$\veps$, we compute the flow Mach numbers $M=\sqrt{(u^2 + 
  v^2)/(\gm p/\rho)}$, the relative kinetic energies and vorticities 
for different values of $\veps \in \{10^{-1}, 10^{-2},
10^{-3}, 10^{-4}\}$. A comparison of the initial Mach number with the
ones at $T=1$ for different values of $\veps$ in
Figure\,\ref{fig:stn_vrtx_eps_M} shows that the dissipation of the
scheme is independent of $\veps$ as the distortion of the vortex
remains the same for different $\veps$.   
\begin{figure}[htbp]
  \centering
  \includegraphics[height=0.30\textheight]{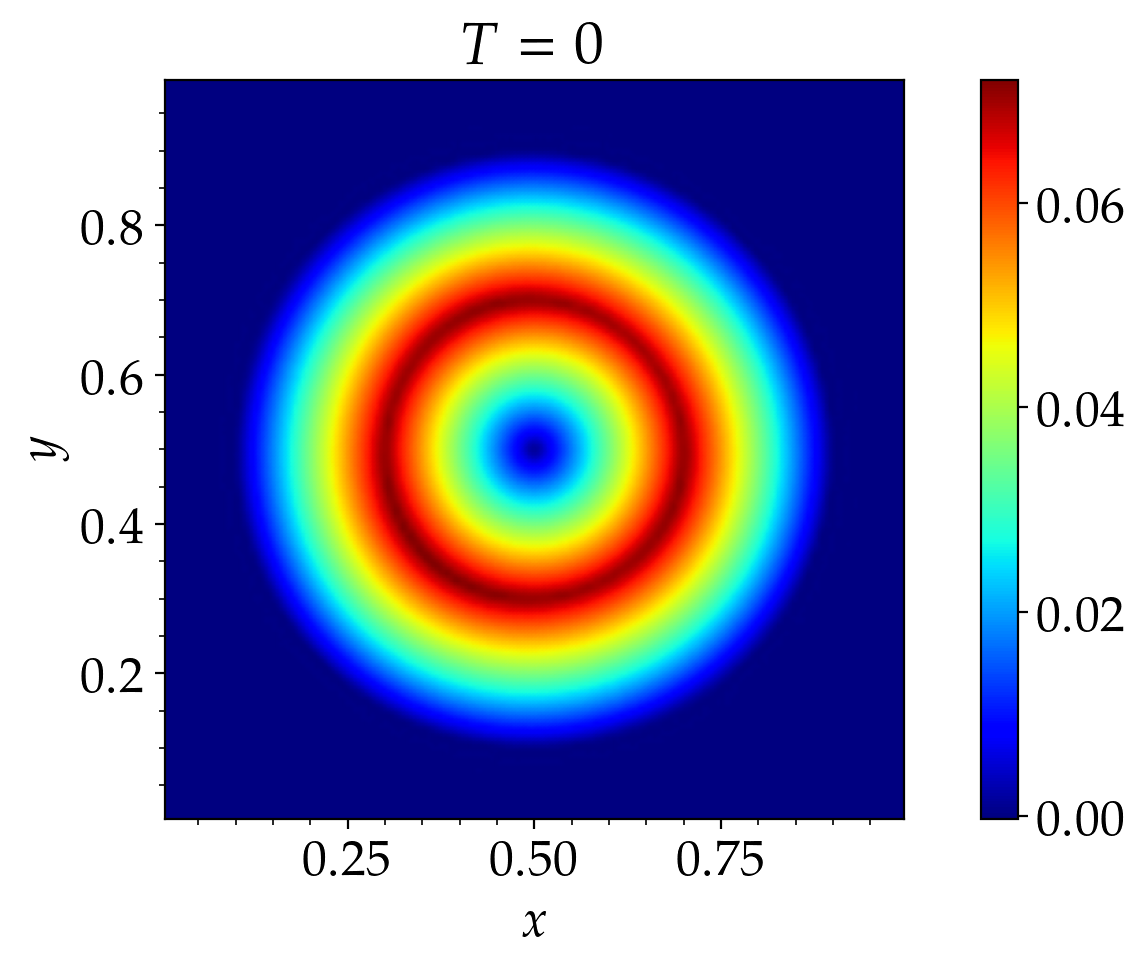}
  \caption{Pseudo-colour plot of the flow Mach number at time $T=0$.}
  \label{fig:stn_vrtx_ini_M}
\end{figure}
\begin{figure}[htbp]
  \centering
  \includegraphics[height=0.23\textheight]{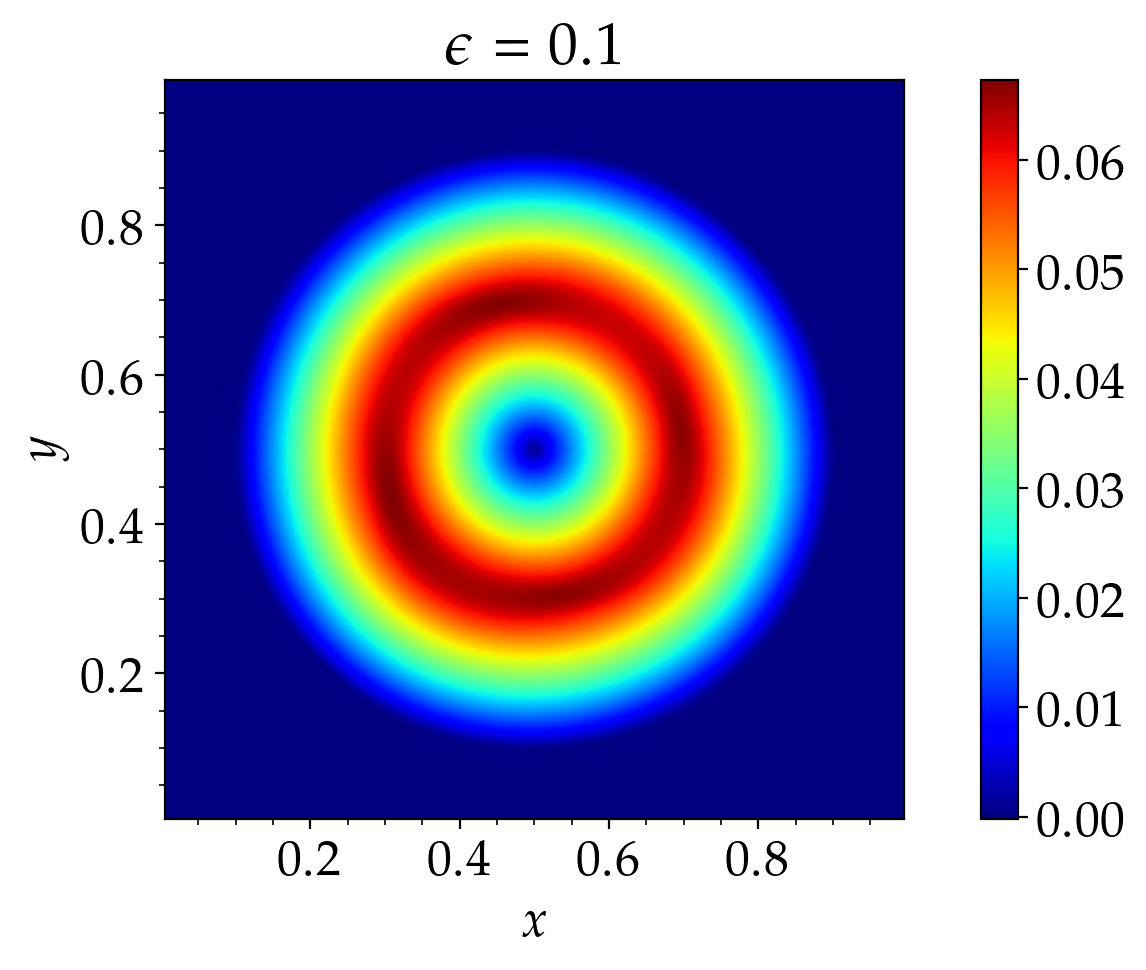}
  \includegraphics[height=0.23\textheight]{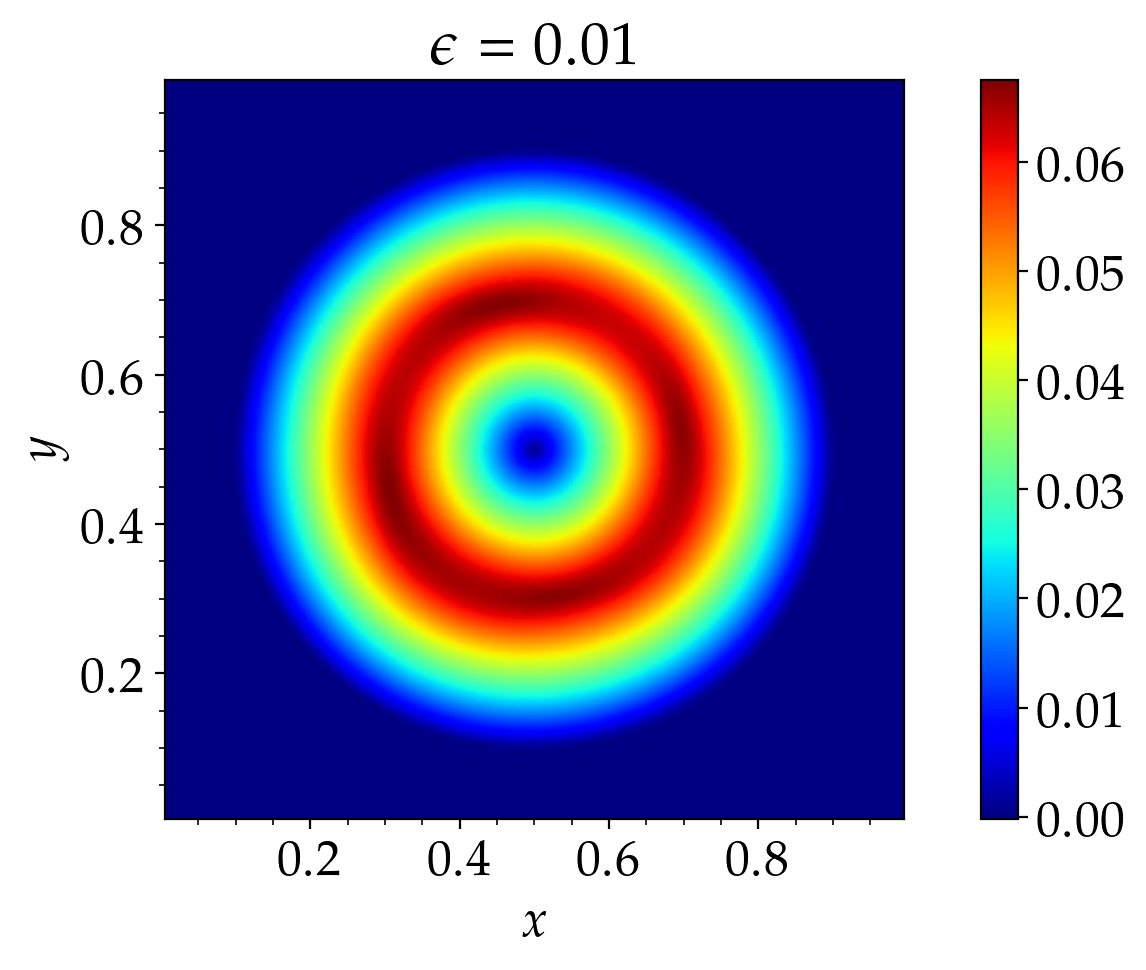} 
  \includegraphics[height=0.23\textheight]{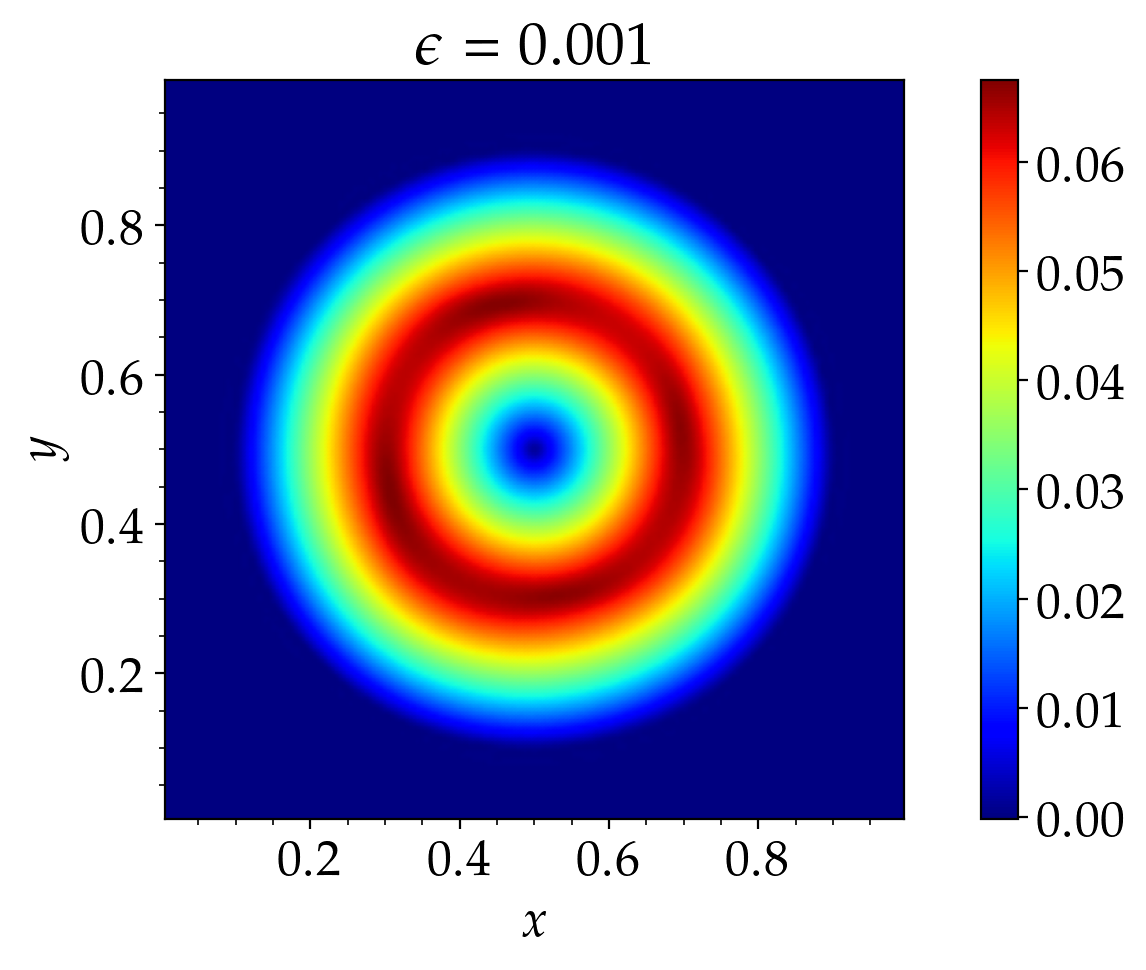}
  \includegraphics[height=0.23\textheight]{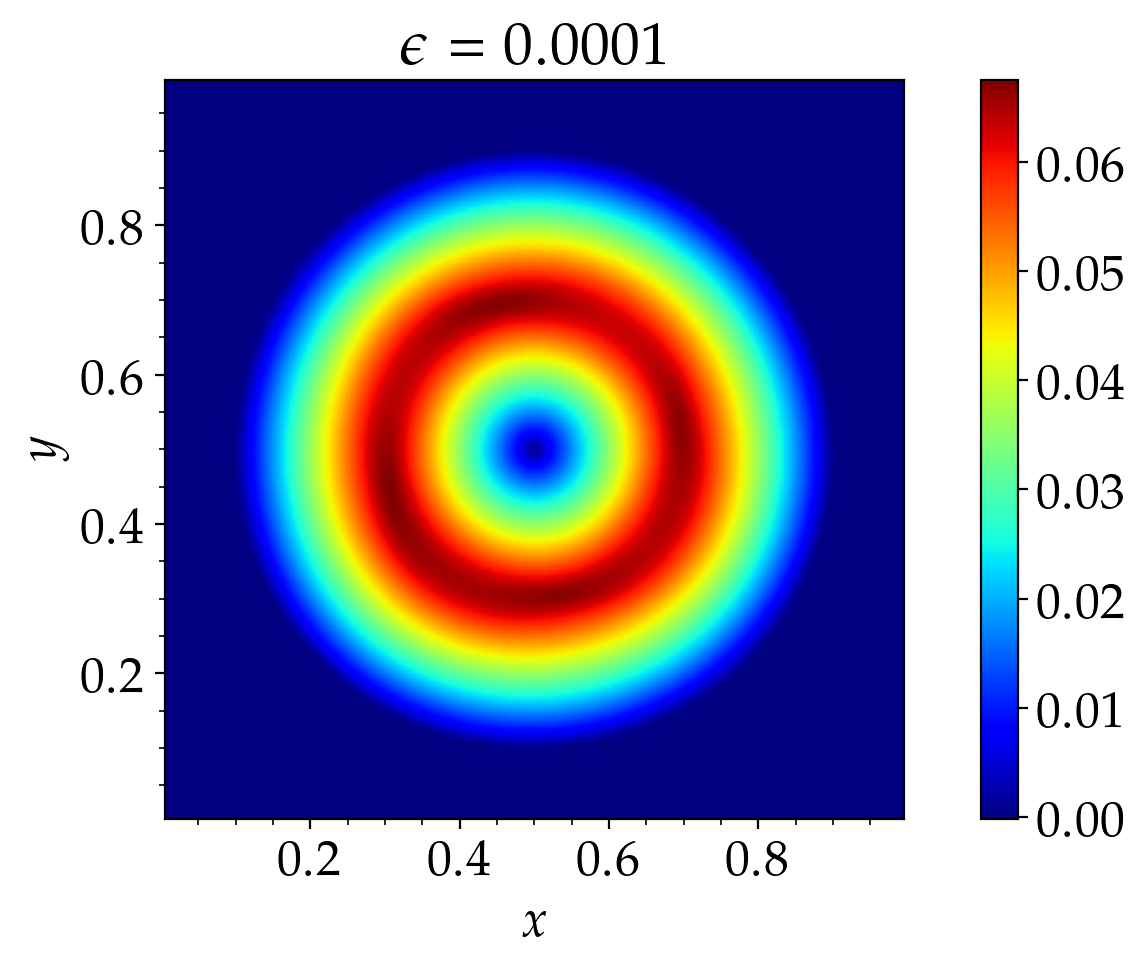}  
  \caption{Pseudo-colour plots of the flow Mach number at time $T=1$
    for different values of $\veps$.} 
  \label{fig:stn_vrtx_eps_M}
\end{figure}
In order to further corroborate the above fact, we plot in
Figure\,\ref{fig:stn_vrtx_rel_KE_vort} the relative kinetic energies
versus time and a cross-section of the vorticity at the final
time. Clearly, the figures corresponding to different $\veps$ are
indistinguishable.    
\begin{figure}[htbp]
  \centering
  \includegraphics[height=0.25\textheight]{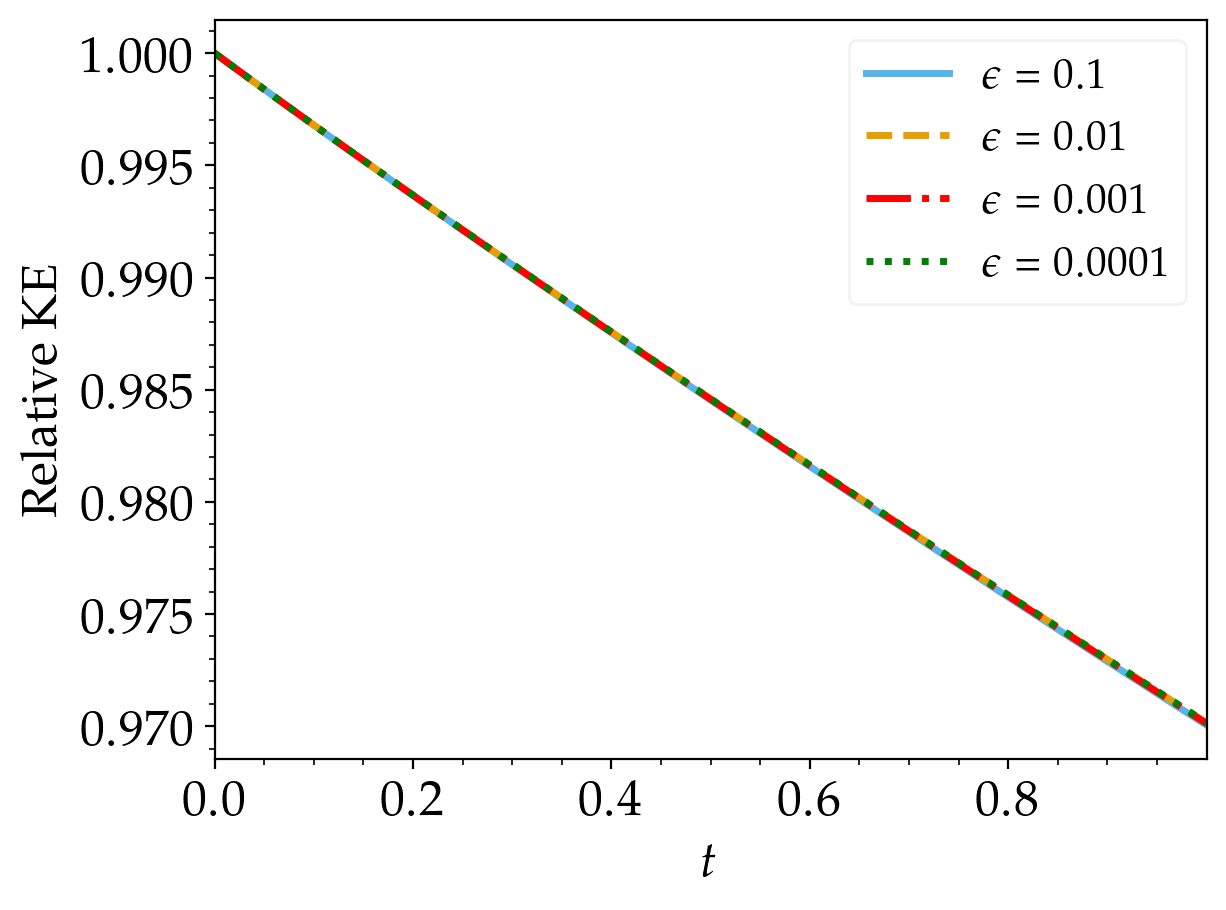}
  \includegraphics[height=0.25\textheight]{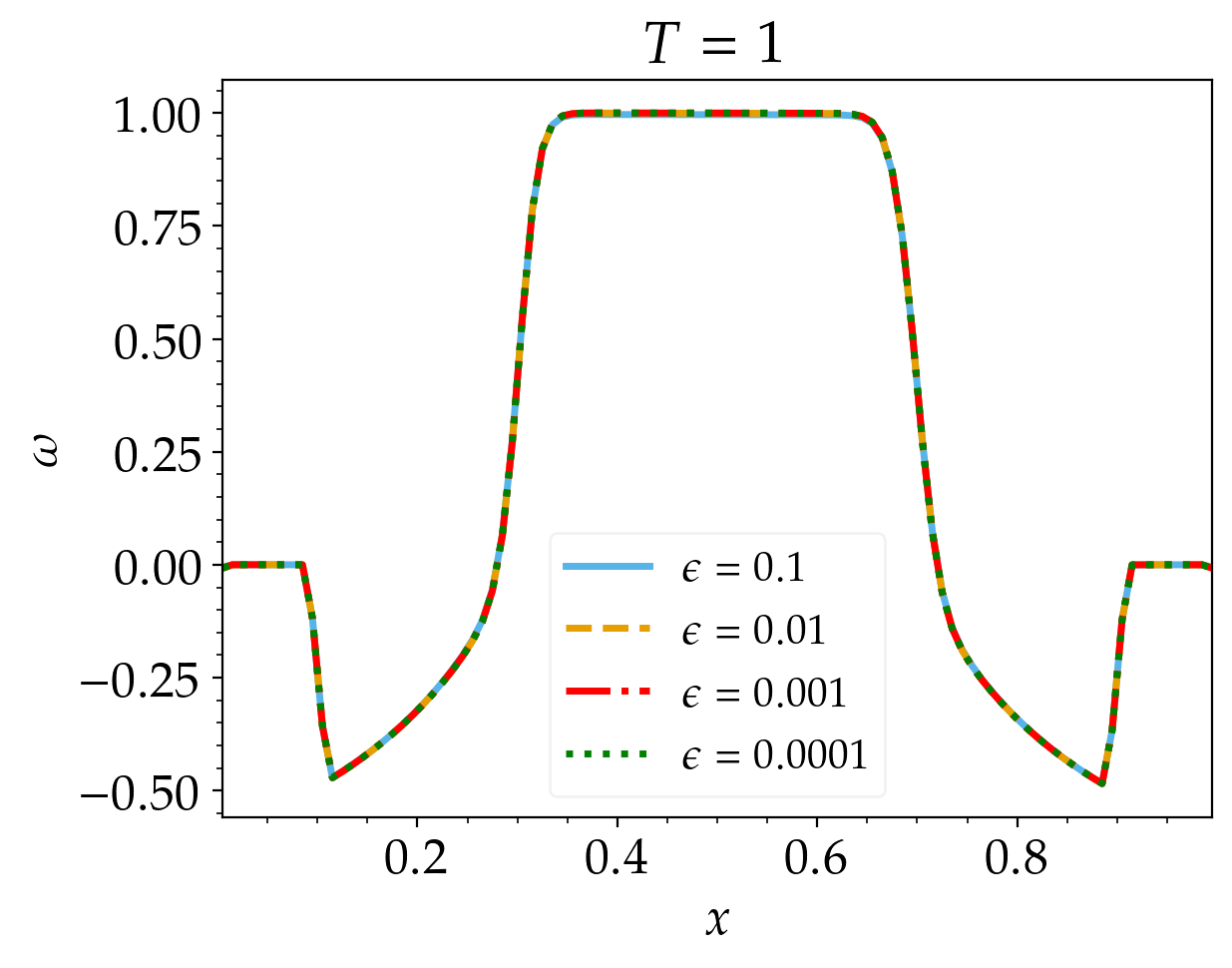}    
  \caption{Comparison of relative kinetic energies (left) and cross
    sections of the vorticities (right) for different values of
    $\veps$.}  
    \label{fig:stn_vrtx_rel_KE_vort}
  \end{figure}
  
\section{Conclusions}
\label{sec:concl}
We have constructed an energy stable, structure preserving,
well-balanced and AP scheme for the barotropic Euler system with
gravity in the anelastic limit. The energy stability is achieved by
introducing a velocity shift in the convective fluxes of mass and
momenta. The semi-implicit in time and finite volume in space scheme
preserves the positivity of density and is weakly consistent with the
continuous model upon mesh refinement. The numerical scheme admits 
the discrete hydrostatic states as solutions and the stability of
numerical solutions with respect to relative energy leads to the
well-balancing feature. The AP property of the scheme is rigorously
shown on the basis of apriori energy estimates. 

Results from several benchmark case studies confirm the theoretical
findings. Simulation of stationary initial data clearly shows that the
scheme approximates hydrostatic steady states upto machine precision
irrespective of the value of $\veps$, which substantiate its
well-balancing property. The proposed method can accurately resolve
flows that arise from small perturbation of steady states, where a
classical explicit scheme often fails. We have demonstrated the scheme's
capabilities to maintain the positivity of density and to capture
shock discontinuities in the compressible regime. The scheme performs
well for a vortex benchmark for low Mach number flows and its
dissipation seems uniform across different values of $\veps$ tending 
towards zero.

\bibliography{references}
\bibliographystyle{abbrv}

\end{document}